% ----------------------------------------------------------------
% AMS-LaTeX Paper ************************************************
% **** -----------------------------------------------------------
\documentclass{amsart}
\usepackage{graphicx,color,pdfsync}
\usepackage{amssymb,enumerate}
\usepackage[bookmarksnumbered,colorlinks,plainpages,backref]{hyperref}
\usepackage{ulem}

\normalem

% ----------------------------------------------------------------
\vfuzz2pt % Don't report over-full v-boxes if over-edge is small
\hfuzz2pt % Don't report over-full h-boxes if over-edge is small
% THEOREMS -------------------------------------------------------
\newtheorem{thm}{Theorem}[section]
\newtheorem{cor}[thm]{Corollary}
\newtheorem{lem}[thm]{Lemma}
\newtheorem{prop}[thm]{Proposition}
\theoremstyle{definition}
\newtheorem{defn}[thm]{Definition}
\theoremstyle{remark}
\newtheorem{rem}[thm]{Remark}
\numberwithin{equation}{section}
% MATH -----------------------------------------------------------
\newcommand{\norm}[1]{\left\Vert#1\right\Vert}
\newcommand{\abs}[1]{\left\vert#1\right\vert}
\newcommand{\set}[1]{\left\{#1\right\}}

\newcommand{\dbar}{\bar\partial}
\newcommand{\ddbar}{\partial\bar\partial}

\DeclareMathOperator{\dom}{Dom}

\DeclareMathOperator{\re}{Re}
\DeclareMathOperator{\im}{Im}

\DeclareMathOperator{\Span}{span}

\DeclareMathOperator{\Hess}{Hess}

\DeclareMathOperator{\Tr}{Tr}

\newcommand{\Om}{\Omega}

%\hspace{-3.5pt}{}^*}

\DeclareMathOperator{\Rre}{Re}

\newcommand{\p}{\partial}

\newcommand{\z}{\bar z}

\newcommand{\C}{\mathbb C}

% ----------------------------------------------------------------
\begin{document}

\title[Maximal Estimates on Non-pseudoconvex domains]{Maximal Estimates for the $\bar\partial$-Neumann Problem on Non-pseudoconvex domains}%
\author{Phillip S. Harrington and Andrew Raich}%
\address{SCEN 309, 1 University of Arkansas, Fayetteville, AR 72701}%
\email{psharrin@uark.edu, araich@uark.edu}%
\thanks{This work was supported by a grant from the Simons Foundation (707123, ASR)}%

\subjclass[2010]{32W05, 35N15, 32F17}
\keywords{Maximal estimates, $\bar\partial$-Neumann operator}

%\begin{abstract}
%\end{abstract}

%\date{}%
%\dedicatory{}%
%\commby{}%
% ----------------------------------------------------------------
\begin{abstract}
It is well known that elliptic estimates fail for the $\bar\partial$-Neumann problem. Instead, the best that one can hope for is that derivatives in every direction but one can be estimated by the associated Dirichlet form, and when this happens, we say that the $\bar\partial$-Neumann problem satisfies maximal estimates. In the pseudoconvex case, 
a necessary and sufficient geometric condition for maximal estimates has been derived by Derridj (for $(0,1)$-forms) and Ben Moussa (for $(0,q)$-forms when $q\geq 1$).  
In this paper, we explore necessary conditions and sufficient conditions for maximal estimates in the non-pseudoconvex case.  We also discuss when the necessary conditions and sufficient conditions agree and provide examples. Our results subsume the earlier known results from the pseudoconvex case.
\end{abstract}
\maketitle

% ----------------------------------------------------------------

\section{Introduction}

The $\bar\partial$-Neumann problem is the model example of a non-coercive boundary value problem, in that it consists of a second-order operator which is elliptic in the interior, but due to the boundary condition solutions gain at most one derivative in the Sobolev scale.  See \cite{ChSh01} or \cite{Str10} for a detailed exposition of the $\bar\partial$-Neumann problem.

Given that elliptic estimates fail for the $\bar\partial$-Neumann problem, the best that one can hope for would be elliptic estimates in all directions but one.  Such estimates are known as maximal estimates (see Definition \ref{defn:maximal_estimate} below for the precise definition).  For pseudoconvex boundaries, these have been completely characterized by Derridj \cite{Derr78} for $(0,1)$-forms and by Ben Moussa \cite{Ben00} for $(0,q)$-forms when $q\geq 1$.  Although we will focus on the $\bar\partial$-Neumann problem in this paper, the analogous problem for the boundary operator $\Box_b$ has been completely characterized by Grigis and Rothschild in \cite{GrRo88}.  Maximal estimates have many applications.  In addition to those given by Derridj and Ben Moussa, see also \cite{CSS20}, \cite{Koe02}, or \cite{Liu19}.

On non-pseudoconvex domains, much less is known.  Andreotti and Grauert \cite{AnGr62} introduced the natural generalization of strict pseudoconvexity in the study of $(0,q)$-forms in manifolds of complex dimension $n$: the Levi-form must have at least $q+1$ negative or $n-q$ positive eigenvalues.  We adopt the terminology of Folland and Kohn \cite{FoKo72} in referring to this property as $Z(q)$.  It is not difficult to see that a bounded domain satisfying $Z(1)$ at every boundary point must be strictly pseudoconvex.  It follows from work of H\"ormander \cite{Hor65} that $Z(q)$ is equivalent to optimal estimates for the $\bar\partial$-Neumann operator acting on $(0,q)$-forms.  We will see in Theorem \ref{thm:Z_q} that maximal estimates on $(0,q)$-forms are equivalent to $Z(q)$ whenever the Levi-form is non-trivial.

In the degenerate case, it is natural to study $(0,q)$-forms on domains with at least $q+1$ non-positive or $n-q$ non-negative eigenvalues, but most known results seem to require a stronger condition.  See \cite{AhBaZa06}, \cite{Bri06}, \cite{EaSu80}, \cite{HaRa15}, and \cite{Ho91} for a wide range of results using an equally wide range of sufficient conditions generalizing $Z(q)$ to the degenerate case.  As far as we know, no study has yet been carried out on maximal estimates for $(0,q)$-forms on non-pseudoconvex boundaries.  Our hope in the present paper is that this study will shed some light on the appropriate conditions for the $L^2$-theory for $(0,q)$-forms on non-pseudoconvex domains.

We note that maximal estimates are local estimates.  If a domain $\Omega$ admits a maximal estimate on $(0,q)$-forms on a neighborhood $U$ of a boundary point $p$ but $\Omega$ is not pseudoconvex on $U$, there is no guarantee that we can find a domain $\tilde\Omega$ agreeing with $\Omega$ on $U$ such that the $\bar\partial$-Neumann problem is solvable on $\tilde\Omega$.  The authors have explored the technical issues involved in constructing such an extension in \cite{HaRa15}.  If, for example, the Levi-form has at least $q+1$ negative eigenvalues on $\partial\Omega\cap U$, then the domain $\tilde\Omega$ must be unbounded, which introduces additional technical issues (see \cite{HaRa17z} and \cite{HaRa18}).  Although we will not explore this issue further in the present paper, we note that our strongest results (e.g., Theorems \ref{thm:Z_q} and \ref{thm:almost_pseudoconvex}), have hypotheses that are strong enough to imply the existence of such an extension when the Levi-form has at least $n-q$ non-negative eigenvalues on $\partial\Omega\cap U$.

In Section \ref{sec:necessary_condition}, we will adapt an argument of H\"ormander \cite{Hor65} to derive a necessary condition for maximal estimates on $(0,q)$-forms that generalizes the conditions derived by Derridj and Ben Moussa.  If we denote the eigenvalues of the Levi-form, arranged in non-decreasing order, by $\{\lambda_1,\ldots,\lambda_{n-1}\}$, then on pseudoconvex domains maximal estimates on $(0,q)$-forms near some point $p\in\partial\Omega$ are equivalent to the existence of some constant $\epsilon>0$ such that $\sum_{j=1}^q\lambda_j\geq\epsilon\sum_{j=1}^{n-1}\lambda_j$ on some neighborhood of $p$ in $\partial\Omega$ \cite{Ben00}.  If we replace pseudoconvexity with the requirement that the Levi-form have at least $n-q$ non-negative eigenvalues in a neighborhood of $p$, then we will show in Theorem \ref{thm:necessary_condition} that maximal estimates on $(0,q)$-forms near $p$ imply the existence of $\epsilon>0$ such that $\sum_{\{1\leq j\leq q:\lambda_j\geq 0\}}\lambda_j\geq\epsilon\sum_{j=1}^{n-1} |\lambda_j|$ on some neighborhood of $p$ in $\partial\Omega$.  Unlike the conditions of Derridj and Ben Moussa, it is not yet clear whether this necessary condition is also sufficient, although Theorems \ref{thm:Z_q} and \ref{thm:almost_pseudoconvex} provide a wide range of special cases in which our necessary condition is also sufficient.

Our sufficient conditions require additional discussion.  For most such results, the starting point is the Morrey-Kohn identity.  Suppose that $\Omega\subset\mathbb{C}^n$ is a domain with $C^2$ boundary and $\rho$ is a $C^2$ defining function for $\Omega$.  If $\bar\partial$ denotes the maximal $L^2$ extension of the Cauchy-Riemann operator, $\bar\partial^*$ denotes the Hilbert space adjoint, $1\leq q\leq n$, $\mathcal{I}_q$ denotes the set of increasing multi-indices of length $q$, and $u\in C^1_{0,q}(\overline\Omega)\cap\dom\dbar^*$, then we have the Morrey-Kohn identity (a special case of the Morrey-Kohn-H\"ormander identity given by Proposition 4.3.1 in \cite{ChSh01} or Proposition 2.4 in \cite{Str10}):
\begin{multline}
\label{eq:Morrey_Kohn}
  \norm{\dbar u}^2_{L^2(\Omega)}+\norm{\dbar^* u}^2_{L^2(\Omega)}=\\
  \sum_{j=1}^n\sum_{J\in\mathcal{I}_q}\norm{\frac{\partial}{\partial\bar z_j}u_J}^2_{L^2(\Omega)}+\sum_{j,k=1}^n\sum_{I\in\mathcal{I}_{q-1}}\int_{\partial\Omega}|\nabla\rho|^{-1}u_{jI}\rho_{j\bar k}\overline{u_{kI}} \, d\sigma.
\end{multline}
In both the study of maximal estimates for pseudoconvex domains and the study of $L^2$ theory for $\bar\partial$ on non-pseudoconvex domains, it is useful to integrate by parts in the first term on the right-hand side.  As shown in \cite{HaRa15}, it is useful to prescribe this integration by parts via a Hermitian matrix of functions $\Upsilon$ with the property that every eigenvalue of $\Upsilon$ is bounded between $0$ and $1$.  Then we can decompose the gradient into two non-negative terms:
\begin{multline*}
  \sum_{j=1}^n\sum_{J\in\mathcal{I}_q}\norm{\frac{\partial}{\partial\bar z_j}u_J}^2_{L^2(\Omega)}=\sum_{j,k=1}^n\sum_{J\in\mathcal{I}_q}\int_\Omega(\delta_{jk}-\Upsilon^{\bar k j})\frac{\partial}{\partial\bar z_k}u_J \frac{\partial}{\partial z_j}\overline{u_J} \, dV\\
  +\sum_{j,k=1}^n\sum_{J\in\mathcal{I}_q}\int_\Omega\Upsilon^{\bar k j}\frac{\partial}{\partial\bar z_k}u_J \frac{\partial}{\partial z_j}\overline{u_J} \, dV.
\end{multline*}
Postponing the technical details for Section \ref{sec:Basic_Identity}, we observe that integrating by parts twice in the second term gives us
\begin{multline*}
  \sum_{j=1}^n\sum_{J\in\mathcal{I}_q}\norm{\frac{\partial}{\partial\bar z_j}u_J}^2_{L^2(\Omega)}=\sum_{j,k=1}^n\sum_{J\in\mathcal{I}_q}\int_\Omega(\delta_{jk}-\Upsilon^{\bar k j})\frac{\partial}{\partial\bar z_k}u_J \frac{\partial}{\partial z_j}\overline{u_J} \, dV\\
  +\sum_{j,k=1}^n\sum_{J\in\mathcal{I}_q}\int_\Omega\Upsilon^{\bar k j}\frac{\partial}{\partial z_j}u_J \frac{\partial}{\partial\bar z_k}\overline{u_J} \, dV+\cdots,
\end{multline*}
where we have omitted the lower order terms.  To avoid boundary terms, we must choose $\Upsilon$ so that we are only integrating by parts with respect to tangential derivatives (see \eqref{eq:Upsilon_mixed_vanishes} below).  This implies that on the boundary $\Upsilon$ must have at least one eigenvalue equal to zero, but if the remaining eigenvalues are uniformly bounded away from zero and one, then we have an estimate for every derivative of $u$ orthogonal to $Z=\sum_{j=1}^n\frac{\partial\rho}{\partial\bar z_j}\frac{\partial}{\partial z_j}$.  This is precisely what we need to obtain maximal estimates.  However, two integrations by parts will introduce two derivatives of the coefficients of $\Upsilon$, which is why in \cite{HaRa15} we assume that the coefficients of $\Upsilon$ are $C^2$.  Indeed, in the proof of Theorem \ref{thm:Z_q}, we will use this technique to prove maximal estimates on $(0,q)$-forms on any $C^3$ boundary satisfying $Z(q)$.  However, in Lemma \ref{lem:gradient_transformation} below, we will see that it suffices for $\Upsilon$ to have weak derivatives in certain Sobolev spaces and for certain functions of these weak derivatives to be bounded (see also \cite{ChHa18}, where a related observation was used to study closed range of $\bar\partial$ near pseudoconcave boundaries with low boundary regularity).  This may seem like a needless complication, but in Proposition \ref{prop:ex_2}, we will construct an explicit example where maximal estimates hold but it is impossible to prove that maximal estimates hold using the above method when $\Upsilon$ has $C^2$ coefficients.  Furthermore, this will allow us to relax the boundary regularity in certain special cases and show that, for example, the results of Derridj and Ben Moussa hold on domains with $C^{2,\alpha}$ boundaries for any $0<\alpha<1$.

The implications of Proposition \ref{prop:ex_2} for the $L^2$ theory of the $\bar\partial$-Neumann problem on non-pseudoconvex domains are significant.  As the authors have shown in \cite{HaRa20}, if we require $\Upsilon$ to have $C^2$ coefficients, then we are placing significant restrictions on the boundary geometry near points where the Levi-form degenerates.  In the present paper, we have provided evidence that these geometric restrictions are unlikely to be necessary, but the methods of proof will require greater subtlety than has been used in the past.  Indeed, constructing the $\Upsilon$ used in the proof of Proposition \ref{prop:ex_2} is not trivial (especially compared to the relatively straight-forward construction used in Theorem \ref{thm:Z_q}).  However, a deeper understanding of these methods may make it possible to derive more natural sufficient conditions for maximal estimates, or even show that the necessary condition given in Theorem \ref{thm:necessary_condition} is also sufficient.

\section{Definitions and Key Results}

For $1\leq q\leq n$, let $\mathcal{I}_q$ denote the set of all increasing multi-indices over $\{1,\ldots,n\}$ of length $q$.  For a $(0,q)$-form $u$, we write $u=\sum_{J\in\mathcal{I}_q} u_J \, d\bar z^J$.  We may extend the definition of $u_J$ to coefficients with non-increasing multi-indices by requiring $u_J$ to be skew-symmetric with respect to the indices in $J$.  When each $u_J$ is $C^1$, we may write
\[
  \bar\partial u=\sum_{J\in\mathcal{I}_q}\sum_{j=1}^n\frac{\partial}{\partial\bar z_j}u_J\, d\bar z_j\wedge d\bar z^J,
\]
with the formal $L^2$ adjoint
\[
  \vartheta u=-\sum_{I\in\mathcal{I}_{q-1}}\sum_{j=1}^n\frac{\partial}{\partial z_j}u_{jI}\, d\bar z^I.
\]

We normalize our Euclidean metric for $\mathbb{C}^n$ so that
\[
  \left<dz_j,dz_k\right>=\delta_{jk}\text{ and }\left<\frac{\partial}{\partial z_j},\frac{\partial}{\partial z_k}\right>=\delta_{jk}\text{ for all }1\leq j,k\leq n.
\]
Let $\, dV$ denote the corresponding volume element, and for $\Omega\subset\mathbb{C}^n$ with $C^1$ boundary and $u,v\in L^2_{0,q}(\Omega)$, we write
\[
  \left(u,v\right)_{L^2(\Omega)}=\sum_{J\in\mathcal{I}_q}\int_\Omega u_J\overline{v_J}\, dV
\]
and
\[
  \norm{u}^2_{L^2(\Omega)}=\sum_{J\in\mathcal{I}_q}\int_\Omega|u_J|^2 \, dV.
\]
We use $\dom\dbar\subset L^2_{0,q}(\Omega)$ to denote the set of all $u\in L^2_{0,q}(\Omega)$ such that $\dbar u$ exists in the distribution sense and $\dbar u\in L^2_{0,q+1}(\Omega)$.  We let $\dbar^*$ denote the Hilbert space adjoint with domain $\dom\dbar^*$.  As is well-known (see, e.g., Lemma 4.2.1 in \cite{ChSh01} or (2.9) in \cite{Str10}), $u\in C^1_{0,q}(\overline\Omega)$ is in $\dom\dbar^*$ if and only if
\[
  \sum_{j=1}^n\frac{\partial\rho}{\partial z_j}u_{jI}=0\text{ on }\partial\Omega\text{ for all }I\in\mathcal{I}_{q-1},
\]
and in this case $\dbar^* u=\vartheta u$.

We let $\, d\sigma$ denote the surface measure on $\partial\Omega$ induced from the ambient Euclidean metric.

We may now give a precise definition of a maximal estimate.
\begin{defn}
\label{defn:maximal_estimate}
  Let $\Omega\subset\mathbb{C}^n$ be a domain with $C^{1,1}$ boundary and let $1\leq q\leq n$.  We say that $\Omega$ admits a
\emph{maximal estimate on $(0,q)$-forms} near $p\in\partial\Omega$ if $p$ admits a neighborhood $U$ and constants $A,B>0$ such that if $\{L_1,\ldots,L_n\}$ is an orthonormal basis for $T^{1,0}(U)$ with Lipschitz coefficients satisfying $L_j|_{\partial\Omega}\in T^{1,0}(\partial\Omega\cap U)$ for every $1\leq j\leq n-1$, then
  \begin{multline}
  \label{eq:maximal_estimate}
    \sum_{j=1}^n\sum_{J\in\mathcal{I}_q}\norm{\frac{\partial}{\partial\bar z_j}u_J}^2_{L^2(\Omega)}+\sum_{j=1}^{n-1}\sum_{J\in\mathcal{I}_q}\norm{L_j u_J}^2_{L^2(\Omega)}\leq\\
     A\left(\norm{\dbar u}^2_{L^2(\Omega)}+\norm{\dbar^* u}^2_{L^2(\Omega)}\right)+B\norm{u}^2_{L^2(\Omega)}
  \end{multline}
  for every $u\in C^1_{0,q}(\overline\Omega)\cap\dom\dbar^*$ supported in $\overline\Omega\cap U$.
\end{defn}
We will see that the constant $A>0$ in \eqref{eq:maximal_estimate} plays a significant role in both our necessary and sufficient conditions.  We will see that this definition and the optimal value of $A$ are both independent of the choice of orthonormal basis in Lemma \ref{lem:gradient_comparison}.  Note that when $q=n$, the $\bar\partial$-Neumann problem is elliptic, so $\Omega$ always admits a maximal estimate on $(0,n)$-forms near any boundary point.

Now that we have a precise definition for maximal estimates, we are able to state our necessary condition:
\begin{thm}
\label{thm:necessary_condition}
  Let $\Omega\subset\mathbb{C}^n$ be a domain with $C^{2,\alpha}$ boundary for some $\alpha>0$ and let $1\leq q\leq n-1$.  Assume that $\Omega$ admits maximal estimates on $(0,q)$-forms near $p\in\partial\Omega$.  Let $U$ and $A$ be as in Definition \ref{defn:maximal_estimate}, and let the eigenvalues of the Levi-form on $U\cap\partial\Omega$ with respect to a $C^{2,\alpha}$ defining function $\rho$ be given by $\{\lambda_1,\ldots,\lambda_{n-1}\}$, arranged in non-decreasing order.  Then
  \begin{equation}
  \label{eq:necessary_condition}
    \sum_{j=1}^q\lambda_j+\sum_{\{1\leq j\leq n-1:\lambda_j<0\}}|\lambda_j|\geq\frac{1}{A}\sum_{j=1}^{n-1} |\lambda_j|
  \end{equation}
  on $U\cap\partial\Omega$.
\end{thm}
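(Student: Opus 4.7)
Following H\"ormander's strategy from \cite{Hor65}, the plan is to test the maximal estimate \eqref{eq:maximal_estimate} against a family of test forms concentrated near an arbitrary boundary point $p_0\in U\cap\partial\Omega$ and to extract the pointwise inequality \eqref{eq:necessary_condition} from the leading-order asymptotics. Fix $p_0$, translate it to the origin, and choose holomorphic coordinates in which $\rho(z)=2\re z_n+\sum_{j=1}^{n-1}\lambda_j|z_j|^2+O(|z|^{2+\alpha})$, with $\lambda_1\leq\cdots\leq\lambda_{n-1}$ the Levi eigenvalues at $p_0$. Pick a local basis of tangential $(1,0)$-fields $L_j=\partial/\partial z_j-(\rho_j/\rho_n)\partial/\partial z_n$, $j=1,\ldots,n-1$, which satisfy $L_j\rho\equiv 0$ on $U$, together with a complex normal $L_n$; denote the dual coframe by $\{\omega^j\}$. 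Working in this basis is justified by the fact that \eqref{eq:maximal_estimate} is basis-independent (the paper notes this in Lemma~\ref{lem:gradient_comparison}). In this frame the $\bar\partial^*$-condition becomes $u_K|_{\partial\Omega}=0$ for every $K\in\mathcal{I}_q$ containing $n$.

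Consider the family of test forms
\[u_{a,t}=\chi(z)\,e^{t\rho(z)/2}\sum_{J\in\mathcal{I}_q,\,n\notin J}a_J\,\bar\omega^J,\]
where $\chi$ is a cutoff supported near the origin, $t>0$ is large, and $a=(a_J)$ is a coefficient vector. Each $u_{a,t}$ lies in $\dom\bar\partial^*$ because no multi-index contains $n$. Substituting $u_{a,t}$ into \eqref{eq:maximal_estimate} and into the Morrey--Kohn identity \eqref{eq:Morrey_Kohn} and using the Laplace-type asymptotic $\int_\Omega fe^{t\rho}\,dV=t^{-1}\int_{\partial\Omega}f\,d\sigma/|\nabla\rho|+O(t^{-2})$, the Morrey--Kohn boundary integral reduces to $\sum_J|a_J|^2\Lambda_J\,M_0$ with $\Lambda_J=\sum_{j\in J}\lambda_j$ and $M_0=\int_{\partial\Omega}|\chi|^2\,d\sigma/|\nabla\rho|$; the Dirichlet term $\sum_{j,J}\|\bar\partial_ju_J\|^2$ contributes an explicit $O(t)$ piece quadratic in the $\lambda_j$'s coming from $\bar\partial_j(\phi_0e^{t\rho/2})\sim(t/2)\rho_{\bar j}\phi_0 e^{t\rho/2}$, while $\sum_{j<n,J}\|L_ju_J\|^2$ is of strictly lower order in $t$, because $L_j\rho\equiv 0$ kills the $O(t)$ contribution of $L_j(\phi_0e^{t\rho/2})$.

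Comparing the two sides of \eqref{eq:maximal_estimate} order by order in $t$, the $O(t)$ balance yields only $A\geq 1$, and the $O(1)$ comparison--after shrinking the support of $\chi$ around $p_0$--reduces to a pointwise inequality of the form $A\sum_J|a_J|^2\Lambda_J\geq Q_\lambda(a)$ for an explicit quadratic form $Q_\lambda$ in $a$. Choosing $a_J=\delta_{J,\{1,\ldots,q\}}$ captures $\sum_{j=1}^q\lambda_j$ on the left; to bring in the $\sum_{\lambda_j<0}|\lambda_j|$ term and the sharp coefficient $1/A$ on the right, enlarge the test-form class with auxiliary components supported on multi-indices $K\in\mathcal{I}_q$ with $n\in K$, weighted by $\rho$ so as to preserve the $\bar\partial^*$-condition. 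These auxiliary pieces register the negative eigenvalues through the off-diagonal Levi entries $\rho_{j\bar n}$; combining the two families of test forms and optimizing over all coefficients should produce \eqref{eq:necessary_condition}.

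The main obstacle is this final step. The weaker $Z(q)$-type inequality $\sum_{j=1}^q\lambda_j\geq 0$ emerges easily from the primary test form alone, but the sharper combination in \eqref{eq:necessary_condition} requires a delicate simultaneous optimization over the primary and auxiliary test forms, together with a careful accounting of the subleading terms in the Laplace expansion and of the $O(|z|^{2+\alpha})$ remainder in $\rho$. The $C^{2,\alpha}$-regularity of $\partial\Omega$ is used precisely to keep these error terms below the main order as $t\to\infty$, so that the $O(1)$ pointwise inequality survives in the limit with the sharp factor $1/A$.
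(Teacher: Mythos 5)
Your plan correctly identifies the broad outline (H\"ormander-style concentrating test forms, extracting a pointwise inequality from the leading asymptotics), but there are two genuine gaps, and the place you flag as ``the main obstacle'' is exactly where the paper's proof has machinery you have not reproduced.

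First, the paper does not go directly from \eqref{eq:maximal_estimate} to the test-form asymptotics. It first reformulates the left-hand side of \eqref{eq:maximal_estimate} using the gradient transformation of Lemma~\ref{lem:gradient_transformation} and the comparison Lemma~\ref{lem:G_bounded_by_gradient} with the specific $\Upsilon$ that projects (at the base point) onto the positive eigenspaces of the Levi form. Via \eqref{eq:gradient_transformation} and \eqref{eq:Morrey_Kohn}, this converts \eqref{eq:maximal_estimate} into the intermediate inequality \eqref{eq:Upsilon_inequality_u}, which features \emph{two} boundary Hessian integrals: one weighted by $\Upsilon$ (producing $\sum_{\lambda_j>0}\lambda_j$) and the usual Morrey--Kohn one (producing $\sum_{j\leq q}\lambda_j$ after the choice of pure multi-index $J=\{1,\dots,q\}$). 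Your plan omits this step entirely, and without it the test-form asymptotics cannot capture the term $\sum_{\lambda_j>0}\lambda_j$ on the left of \eqref{eq:Upsilon_inequality_psi}.

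Second, your idea of enlarging the test class with auxiliary components on multi-indices containing $n$, ``weighted by $\rho$,'' is not how the paper recovers $\sum_{\lambda_j<0}|\lambda_j|$. The paper keeps a single primary family: $u^\tau=\psi_1(\tau z')\psi_2(\tau z_n)e^{\tau^2(f(z)+iz_n)}\bigwedge_{j=1}^q\left(d\bar z_j-(\rho_n)^{-1}\rho_j\,d\bar z_n\right)$, where $f$ is a holomorphic polynomial cancelling the pluriharmonic part of $\varphi$; under the parabolic rescaling $z_j=\tau^{-1}w_j$ ($j<n$), $z_n=\tau^{-1}\re w_n+i\tau^{-2}\im w_n$, every integral converges to a Gaussian-weighted integral in $w'$ against $e^{-2\mathcal{L}(w')}$ as in \eqref{eq:L2_limit}--\eqref{eq:gradient_limit}. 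Note that the weight is not your $e^{t\rho/2}$: it is the holomorphic phase together with the anisotropic scaling that produces the Gaussian in the limit. The negative-eigenvalue contribution $\sum_{\lambda_j<0}|\lambda_j|$ then enters purely through Lemma~3.2.2 of \cite{Hor65} applied to the resulting functional inequality \eqref{eq:Upsilon_inequality_psi} in $\psi_1$: the sharp constant in the Gaussian-weighted estimate $\int|\partial_{\bar w_j}\psi_1|^2 e^{-2\mathcal{L}}\geq 2\sum_{\lambda_j<0}(-\lambda_j)\int|\psi_1|^2e^{-2\mathcal{L}}$ is exactly $2\sum_{\lambda_j<0}|\lambda_j|$. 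No auxiliary test forms or simultaneous optimization are needed. In short, the two ingredients you are missing are the $\Upsilon$-reformulation that introduces the positive-eigenvalue boundary term on the favorable side, and the recognition that the negative-eigenvalue term is the exact lower bound for the limiting Gaussian Dirichlet form, supplied by H\"ormander's Lemma 3.2.2.
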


In order to state our sufficient condition, we need additional structures.  For an open set $U\subset\mathbb{R}^n$, an integer $k\geq 0$, and a real number $1\leq p\leq\infty$, we define the Sobolev space $W^{k,p}(U)$ to be the set of all $u\in L^p(U)$ such that all derivatives of $u$ of order $k$ or less exist in the weak sense and belong to $L^p(U)$.  For $u\in W^{1,p}(U)$, we let $\nabla u$ denote the gradient of $u$.  For $u\in W^{2,p}(U)$, we let $\nabla^2 u$ denote the Hessian of $u$.  For $Z,W\in T^{1,0}(\mathbb{C}^n)$ and $f\in C^2(\mathbb{C}^n)$, if we write $Z=\sum_{j=1}^n a^j(z)\frac{\partial}{\partial z_j}$ and $W=\sum_{j=1}^n b^j(z)\frac{\partial}{\partial z_j}$, then we may define the \emph{complex Hessian}
\begin{equation}
\label{eq:hessian_alternate}
  \Hess(Z,\bar W)f(z)=\sum_{j,k=1}^n a^j(z)\frac{\partial^2 f}{\partial z_j\partial\bar z_k}(z)\bar b^k(z).
\end{equation}

\begin{defn}
\label{eq:Upsilon_defn}
  For $\Omega\subset\mathbb{C}^n$ with $C^{1,1}$ boundary and $U\subset\mathbb{C}^n$ satisfying $U\cap\partial\Omega\neq\emptyset$, let $\mathcal{M}^1_\Omega(U)$ denote the space of Hermitian $n\times n$ matrices $\Upsilon$ such that
  \begin{enumerate}
    \item each entry of $\Upsilon$ is an element of $L^\infty(U)\cap W^{1,1}(U)$,
    \item $\Upsilon$ and $I-\Upsilon$ are positive semi-definite almost everywhere on $U$,
    \item for $1\leq j\leq n$, $\Upsilon^j\in L^\infty(U)$, where $\Upsilon^j$ is defined by
    \begin{equation}
    \label{eq:Upsilon_j_defined}
      \Upsilon^j=\sum_{k=1}^n\frac{\partial}{\partial\bar z_k}\Upsilon^{\bar k j}
    \end{equation}
    almost everywhere in $U$, and
    \item for any $C^{1,1}$ defining function $\rho$ for $\Omega$ on $U$,
    \begin{equation}
    \label{eq:Upsilon_normal_vanishes}
      \abs{\sum_{j,k=1}^n\frac{\partial\rho}{\partial\bar z_k}\Upsilon^{\bar k j}\frac{\partial\rho}{\partial z_j}}\leq O(|\rho|^2)
    \end{equation}
    almost everywhere on $U$.
  \end{enumerate}
  For $0\leq\eta\leq 1$, let $\mathcal{M}^2_{\Omega,\eta}(U)$ denote the set of $\Upsilon\in\mathcal{M}^1_\Omega(U)$ such that
  \begin{enumerate}
    \item each entry of $\Upsilon$ is an element of $L^\infty(U)\cap W^{1,2}(U)\cap W^{2,1}(U)$ and
    \item $\Theta_{\Upsilon,\eta}\in L^\infty(U)$, where $\Theta_{\Upsilon,\eta}$ is defined by
      \begin{multline}
      \label{eq:Theta_defined}
        \Theta_{\Upsilon,\eta}=\sum_{j,k=1}^n\frac{\partial^2}{\partial z_j\partial\bar z_k}\left(\Upsilon^{\bar k j}-\eta\sum_{\ell=1}^n\Upsilon^{\bar k\ell}\Upsilon^{\bar\ell j}\right)\\
        +\eta\sum_{j,k,\ell=1}^n\left(\frac{\partial}{\partial z_j}\Upsilon^{\bar k\ell}\right)\left(\frac{\partial}{\partial\bar z_k}\Upsilon^{\bar\ell j}\right)
        -(1-\eta)\sum_{j=1}^n\abs{\Upsilon^j}^2.
      \end{multline}
  \end{enumerate}
\end{defn}
Since $\Upsilon\in\mathcal{M}^1_\Omega(U)$ is positive semi-definite and each element of $\Upsilon$ is in $L^\infty(U)$, \eqref{eq:Upsilon_normal_vanishes} implies that for any $C^{1,1}$ defining function $\rho$ for $\Omega$ on $U$,
\begin{equation}
\label{eq:Upsilon_mixed_vanishes}
  \abs{\sum_{k=1}^n\frac{\partial\rho}{\partial\bar z_k}\Upsilon^{\bar k j}}\leq O(|\rho|)\text{ for all }1\leq j\leq n
\end{equation}
almost everywhere on $U$.  Since each element of $\Upsilon$ is in $L^\infty(U)$, it is easy to check that \eqref{eq:Upsilon_normal_vanishes} and \eqref{eq:Upsilon_mixed_vanishes} are together independent of the choice of $C^{1,1}$ defining function $\rho$.

In the special case in which $\Upsilon$ is a projection matrix, the component $\Upsilon^{\bar k j}=\sum_{\ell=1}^n\Upsilon^{\bar k\ell}\Upsilon^{\bar\ell j}$ for all $1\leq j,k\leq n$, so the first term in \eqref{eq:Theta_defined} vanishes when $\eta= 1$.  In this case, it suffices to assume that $\Upsilon$ has Lipschitz coefficients.  This is precisely the case explored in \cite{ChHa18}.

Now we may state our sufficient condition:
\begin{prop}
\label{prop:sufficient_condition}
  Let $\Omega\subset\mathbb{C}^n$ be a domain with $C^{1,1}$ boundary and let $1\leq q\leq n-1$. Let $\rho$ be a $C^{1,1}$ defining function for $\Omega$.  Assume that for some $p\in\partial\Omega$ there exists a neighborhood $U$ of $p$, constants $A>2$ and $0\leq\eta\leq 1$, and $\Upsilon\in\mathcal{M}^2_{\Omega,\eta}(U)$ such that:
  \begin{enumerate}
    \item counting multiplicity, $\Upsilon$ has $n-1$ eigenvalues in the interval $(1/A,1-1/A)$ almost everywhere on $U$, and
    \item if $\{\lambda_1,\ldots,\lambda_{n-1}\}$ denote the eigenvalues of the Levi-form (computed with $\rho$) arranged in non-decreasing order, then
        \begin{equation}
        \label{eq:weak_z_q}
          \sum_{j=1}^q\lambda_j-\sum_{j,k=1}^n\Upsilon^{\bar k j}\rho_{j\bar k}\geq 0
        \end{equation}
        almost everywhere on $U\cap\partial\Omega$.
  \end{enumerate}
  Then $\Omega$ admits a maximal estimate on $(0,q)$-forms near $p$.
\end{prop}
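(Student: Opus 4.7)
The plan is to extend the Morrey-Kohn identity \eqref{eq:Morrey_Kohn} via a $\Upsilon$-weighted double integration by parts to produce a basic identity whose interior quadratic form controls $n-1$ independent complex tangential derivatives of every component $u_J$, with the two hypotheses of the proposition translating directly into interior coercivity and boundary nonnegativity. Concretely, I split $\delta_{jk}=(\delta_{jk}-\eta\Upsilon^{\bar k j})+\eta\Upsilon^{\bar k j}$ inside the $\bar\partial$-gradient term of \eqref{eq:Morrey_Kohn} and apply two integrations by parts in the $\eta\Upsilon$-weighted piece to swap it for $\eta\sum_{j,k,J}\int_\Omega\Upsilon^{\bar k j}\tfrac{\partial u_J}{\partial z_j}\overline{\tfrac{\partial u_J}{\partial z_k}}\,dV$ modulo boundary and bulk residues.

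\textbf{Deriving the basic identity.} The first IBP moves $\partial/\partial\bar z_k$ off $u_J$; the resulting boundary integrand carries the factor $\sum_k\Upsilon^{\bar k j}\partial\rho/\partial\bar z_k$, which vanishes on $\partial\Omega$ by \eqref{eq:Upsilon_mixed_vanishes}, and the bulk residue involves $\Upsilon^j\in L^\infty(U)$. The second IBP moves $\partial/\partial z_j$ off $\overline{u_J}$; its boundary integrand is Hermitian conjugate to the previous one and again vanishes, producing a bulk residue with $\overline{\Upsilon^k}$. After a further IBP on the surviving first-order residues, using $[\partial_{z_j},\partial_{\bar z_k}]=0$ and Hermiticity of $\Upsilon$, the remaining error terms assemble into $\int_\Omega\Theta_{\Upsilon,\eta}|u|^2\,dV$ with $\Theta_{\Upsilon,\eta}$ exactly as in \eqref{eq:Theta_defined}, bounded by the second defining property of $\mathcal{M}^2_{\Omega,\eta}(U)$. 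Since $\Upsilon$ has only weak derivatives in $W^{2,1}\cap W^{1,2}$, I justify these IBPs by mollification $\Upsilon^\epsilon\to\Upsilon$ and pass to the limit using the uniform $L^\infty$ bounds on $\Upsilon^j$ and $\Theta_{\Upsilon,\eta}$. In a Levi-diagonalizing orthonormal frame $\{L_j\}$ with $L_n$ along the complex normal, the $\dom\dbars$-condition $u_{nI}=0$ collapses the Morrey-Kohn Levi boundary term to $\sum_{J\not\ni n}\bigl(\sum_{j\in J}\lambda_j\bigr)|u_J|^2|\nabla\rho|^{-1}$; exploiting the frame-invariance of $\sum_{j,k}\Upsilon^{\bar k j}\rho_{j\bar k}$, the identity takes the form
\begin{align*}
\|\dbar u\|^2+\|\dbars u\|^2&=\sum_{j,k,J}\int_\Omega\Bigl[(\delta_{jk}-\eta\Upsilon^{\bar k j})\tfrac{\partial u_J}{\partial\bar z_k}\overline{\tfrac{\partial u_J}{\partial\bar z_j}}+\eta\Upsilon^{\bar k j}\tfrac{\partial u_J}{\partial z_j}\overline{\tfrac{\partial u_J}{\partial z_k}}\Bigr]dV\\
&\quad+\sum_J\int_{\partial\Omega}|\nabla\rho|^{-1}\Bigl[\sum_{j\in J}\lambda_j-\sum_{j,k}\Upsilon^{\bar k j}\rho_{j\bar k}\Bigr]|u_J|^2\,d\sigma+\int_\Omega\Theta_{\Upsilon,\eta}|u|^2\,dV.
\end{align*}

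\textbf{Extracting the maximal estimate.} Hypothesis~(2) together with the elementary inequality $\sum_{j\in J}\lambda_j\ge\sum_{j=1}^q\lambda_j$ (valid for every $J\in\mathcal{I}_q$ because the $\lambda_j$ are arranged non-decreasingly) renders the boundary integrand nonnegative, so it may be discarded. For the interior form, I diagonalize $\Upsilon$ pointwise in an orthonormal frame that places its distinguished eigenvector on the complex normal (possible near $\partial\Omega$ by \eqref{eq:Upsilon_mixed_vanishes}); hypothesis~(1) then provides $n-1$ tangential eigenvalues $\mu_j\in(1/A,1-1/A)$. For the given $\eta>0$, both $1-\eta\mu_j$ and $\eta\mu_j$ are uniformly bounded below by some $c(\eta,A)>0$ for $j\le n-1$ (for instance, by $1/A$ when $\eta=1$), so the interior quadratic form dominates $c(\eta,A)\sum_{J,\,j\le n-1}\bigl(\|L_ju_J\|^2+\|\bar L_ju_J\|^2\bigr)+\sum_J\|\bar L_nu_J\|^2$, the last piece coming from the $\delta_{jk}-\eta\Upsilon^{\bar k j}$ block in the normal direction. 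Absorbing $\|\Theta_{\Upsilon,\eta}\|_{L^\infty}\|u\|^2$ into the $B\|u\|^2$ term of \eqref{eq:maximal_estimate} and invoking the frame-independence asserted in Lemma \ref{lem:gradient_comparison} concludes the proof.

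\textbf{Main obstacle.} The crux of the proof is identifying the cumulative error of the double IBP as precisely the $L^\infty$-bounded function $\Theta_{\Upsilon,\eta}$ at the weak regularity of $\mathcal{M}^2_{\Omega,\eta}(U)$: intermediate steps produce derivatives of $\Upsilon$ that lie only in $W^{1,2}$ or $W^{2,1}$, so the individual error pieces are merely $L^1$, and only after algebraic cancellations --- based on Hermiticity, the identity $\Upsilon^j=\sum_k\partial\Upsilon^{\bar k j}/\partial\bar z_k$, and the compensating commutator $\eta\sum_{j,k,\ell}(\partial\Upsilon^{\bar k\ell}/\partial z_j)(\partial\Upsilon^{\bar\ell j}/\partial\bar z_k)$ arising from the $\eta\Upsilon^{\bar k\ell}\Upsilon^{\bar\ell j}$ projection correction in \eqref{eq:Theta_defined} --- do the residues collapse into a single $L^\infty$ quantity. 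A secondary subtlety is exploiting the sharp $O(|\rho|^2)$ rate in \eqref{eq:Upsilon_normal_vanishes}, rather than just the $O(|\rho|)$ rate of \eqref{eq:Upsilon_mixed_vanishes}, to kill second-order boundary remainders that appear when a derivative of $\Upsilon$ lands on $\rho$ during an intermediate IBP step.
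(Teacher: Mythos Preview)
Your overall architecture is right---Morrey--Kohn, a $\Upsilon$-weighted double integration by parts producing an interior quadratic form plus a boundary term, then hypothesis~(2) kills the boundary and hypothesis~(1) gives coercivity---but you have misidentified the role of the parameter $\eta$, and this breaks the argument at several points.

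You split $\delta_{jk}=(\delta_{jk}-\eta\Upsilon^{\bar kj})+\eta\Upsilon^{\bar kj}$ and integrate by parts in the $\eta\Upsilon$-piece. This is \emph{not} what the paper does: the paper always splits with the full $\Upsilon$ and instead uses the \emph{shifted} first-order operators $\bar Z_{j,\eta}=\partial_{\bar z_j}-\eta\Upsilon^j$ in the definition of $G_{\Upsilon,\eta}$ (see \eqref{eq:G_Upsilon}). The parameter $\eta$ lives in the zeroth-order shift, not in the weight. Your version has three concrete failures. First, when $\eta=0$ (which the proposition allows), your interior form reduces to $\sum_J\sum_j\|\partial_{\bar z_j}u_J\|^2$ and contains no holomorphic tangential derivatives at all, so no maximal estimate can follow; even for small $\eta>0$ your constant $c(\eta,A)\sim\eta/A$ degenerates. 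Second, the boundary residue from integrating by parts in the $\eta\Upsilon$-piece is $-\eta\sum_{j,k}\Upsilon^{\bar kj}\rho_{j\bar k}|u|^2$, not $-\sum_{j,k}\Upsilon^{\bar kj}\rho_{j\bar k}|u|^2$ as you wrote, so hypothesis~(2) does not directly apply. Third---and this is the point your ``main obstacle'' paragraph anticipates but does not resolve---the bulk error from your IBP is $\Theta_{\eta\Upsilon,0}=\eta\sum_{j,k}\partial_{z_j}\partial_{\bar z_k}\Upsilon^{\bar kj}-\eta^2\sum_j|\Upsilon^j|^2$, \emph{not} $\Theta_{\Upsilon,\eta}$. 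The raw second derivative $\sum_{j,k}\partial_{z_j}\partial_{\bar z_k}\Upsilon^{\bar kj}$ is only in $L^1(U)$ under the $W^{2,1}$ hypothesis, and the compensating $\eta\Upsilon^2$-correction in \eqref{eq:Theta_defined} that makes $\Theta_{\Upsilon,\eta}$ bounded arises precisely from expanding $\|\bar Z_{j,\eta}u\|^2$ and $\|\bar Z^*_{j,\eta}u\|^2$ with the shifted operators; it does not appear in your scheme.

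The fix is to follow the paper: define $G_{\Upsilon,\eta}$ with the shifted operators as in \eqref{eq:G_Upsilon}, establish the identity \eqref{eq:gradient_transformation} (this is Lemma~\ref{lem:gradient_transformation}, whose proof contains exactly the mollification and cancellation you describe), then combine with Morrey--Kohn. Coercivity of $G_{\Upsilon,\eta}$ in the $n-1$ tangential directions comes from the full eigenvalues $\mu_j\in(1/A,1-1/A)$ of $\Upsilon$, independently of $\eta$ (Lemma~\ref{lem:gradient_bounded_by_G}); the $\eta\Upsilon^j$ shifts are absorbed by small-constant/large-constant into $B\|u\|^2$. The boundary term carries the full $\Upsilon$, so hypothesis~(2) applies as stated.
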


\begin{rem} The crucial new hypothesis in Proposition \ref{prop:sufficient_condition} is (1). Condition (2) is one of the hypotheses
of the weak $Z(q)$ condition developed by the authors as a sufficient condition for closed range of $\dbar$ on $(0,q)$-forms
\cite{HaRa15,HaRa19}.  In those papers, it sufficed for the eigenvalues of $\Upsilon$ to lie in the interval $[0,1]$.
\end{rem}

It is not clear whether the necessary condition given by Theorem \ref{thm:necessary_condition} and the sufficient condition given by Proposition \ref{prop:sufficient_condition} are equivalent, but we do have some important special cases.  When the Levi-form is non-trivial, we may use these results to show:
\begin{thm}
\label{thm:Z_q}
  Let $\Omega\subset\mathbb{C}^n$ be a domain with $C^3$ boundary and let $1\leq q\leq n-1$.  Assume that for some $p\in\partial\Omega$ the Levi-form for $\partial\Omega$ has at least one non-vanishing eigenvalue at $p$.  Then $\Omega$ admits a maximal estimate on $(0,q)$-forms near $p$ if and only if $\Omega$ satisfies $Z(q)$ at $p$.
\end{thm}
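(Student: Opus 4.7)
The argument splits along the two implications. For necessity I would invoke Theorem~\ref{thm:necessary_condition}: the maximal estimate near $p$ forces \eqref{eq:necessary_condition} on some $U\cap\partial\Omega$, in particular at $p$. Writing $n_-,n_0,n_+$ for the numbers of strictly negative, zero, and strictly positive Levi eigenvalues at $p$, failure of $Z(q)$ means $n_-\leq q$ and $n_+\leq n-1-q$, which in view of $n_-+n_0+n_+=n-1$ rearranges to $n_-\leq q\leq n_-+n_0$. With the eigenvalues listed in non-decreasing order, the left-hand side of \eqref{eq:necessary_condition} at $p$ telescopes to $\sum_{j=n_-+1}^{q}\lambda_j(p)=0$, since every such index sits in the zero block. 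The right-hand side is strictly positive by the hypothesis of a nonvanishing Levi eigenvalue at $p$, giving a contradiction; hence $Z(q)$ holds at $p$.

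For sufficiency I would verify the hypotheses of Proposition~\ref{prop:sufficient_condition} with an explicit $\Upsilon$. Since $\partial\Omega\in C^3$, one can pick a $C^2$ orthonormal frame $\{L_1,\ldots,L_n\}$ of $T^{1,0}(\mathbb{C}^n)$ near $p$ with $L_n$ in the complex normal direction (so $L_j\rho=O(|\rho|)$ for $j<n$) and, after a constant unitary rotation of $\{L_1,\ldots,L_{n-1}\}$, with $(L_j\bar L_k\rho)(p)=\lambda_j(p)\delta_{jk}$. I let $\Upsilon$ be the matrix diagonal in this frame with constant entries $\mu_j$, set to $1-\delta$ if $\lambda_j(p)<0$, $\delta'$ if $\lambda_j(p)>0$, $1/2$ if $\lambda_j(p)=0$ (for $j<n$), and $\mu_n=0$. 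Constancy of the $\mu_j$ in a $C^2$ frame makes $\Upsilon$ a $C^2$ matrix-valued function, which comfortably places it in $\mathcal{M}^2_{\Omega,0}(U)$; the choice $\mu_n=0$ combined with $L_j\rho=O(|\rho|)$ yields \eqref{eq:Upsilon_normal_vanishes}. A direct computation at $p$ then shows that in either subcase of $Z(q)$---when the top $n-q$ Levi eigenvalues are $\geq\lambda_q(p)>0$, or when the bottom $q+1$ are $\leq\lambda_{q+1}(p)<0$---the quantity $\sum_{j=1}^q\lambda_j-\sum_{j,k}\Upsilon^{\bar k j}\rho_{j\bar k}$ admits a pointwise lower bound of the form $c_0(1-\delta')-M(\delta+\delta')$, where $c_0>0$ is half the magnitude of the critical eigenvalue and $M$ bounds the Levi form in operator norm near $p$. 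Taking $\delta,\delta'$ small relative to $c_0/M$ yields a strict positive gap at $p$, which propagates to a neighborhood by continuity of both sides; this verifies \eqref{eq:weak_z_q}. The same $\delta,\delta'$ place $n-1$ eigenvalues of $\Upsilon$ inside $(1/A,1-1/A)$ for a suitable $A>2$, completing the check.

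The main obstacle is regularity. With $\rho$ only $C^3$ the matrix $(\rho_{j\bar k})$ is merely $C^1$, so a naive spectral construction $\Upsilon=f(\text{Levi})$ would lose the $W^{2,1}$ condition and the $L^\infty$-bound on $\Theta_{\Upsilon,\eta}$ demanded by $\mathcal{M}^2_{\Omega,\eta}(U)$. The trick is to diagonalize only at the single point $p$ and freeze the $\mu_j$ as constants in the $C^2$ adapted frame; this restores the missing derivative at the price of off-diagonal errors in $(L_j\bar L_k\rho)$ away from $p$, which the strict gap at $p$ absorbs by continuity. A minor further subtlety is that zero Levi eigenvalues at $p$ may move to either sign nearby, but the contribution $\mu_j(L_j\bar L_j\rho)(z)$ still vanishes at $p$ and stays of order $|z-p|$, so the gap is preserved.
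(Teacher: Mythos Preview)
Your proof is correct and follows essentially the same route as the paper's: for necessity you unpack Theorem~\ref{thm:necessary_condition} exactly as the paper does in Corollary~\ref{cor:Z_q}, and for sufficiency you build a $C^2$ matrix $\Upsilon$ that is constant-diagonal in an adapted frame diagonalizing the Levi form at $p$, with large weights on the negative eigenspace and small weights on the non-negative eigenspace, then feed it into Proposition~\ref{prop:sufficient_condition}. The paper does the same thing via explicit Euclidean-coordinate formulas (with two values $a,b$ rather than your three $\delta',1/2,1-\delta$) and packages the final step through Corollary~\ref{cor:sufficient_condition_C_2}; these are cosmetic differences, and your observation that freezing the frame at $p$ recovers the missing derivative is precisely the mechanism at work in both arguments.
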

Alternatively, Theorem \ref{thm:Z_q} could be written to state that when at least one eigenvalue is non-vanishing, \eqref{eq:necessary_condition} is both necessary and sufficient for maximal estimates on $(0,q)$-forms.  The following result gives another class of domains for which \eqref{eq:necessary_condition} is both a necessary and sufficient condition for maximal estimates on $(0,q)$-forms.
\begin{thm}
\label{thm:almost_pseudoconvex}
  Let $\Omega\subset\mathbb{C}^n$ be a domain with $C^{2,\alpha}$ boundary for some $0<\alpha<1$, and let $1\leq q\leq n-1$.  Let $\rho$ be a $C^{2,\alpha}$ defining function for $\Omega$, and let $\{\lambda_j\}_{j=1}^{n-1}$ be the eigenvalues of the Levi-form of $\partial\Omega$ with respect to $\rho$ arranged in non-decreasing order.  Assume that for some $p\in\partial\Omega$, there exists a neighborhood $U$ of $p$ and a constant $\epsilon>0$ such that either
  \begin{enumerate}
    \item
    \begin{equation}
    \label{eq:almost_pseudoconvex}
      \sum_{j=1}^q\lambda_j\geq\epsilon\sum_{\{1\leq j\leq n-1:\lambda_j<0\}}|\lambda_j|\text{ on }\partial\Omega\cap U
    \end{equation}
    or
    \item
    \begin{equation}
    \label{eq:almost_pseudoconcave}
      -\sum_{j=q+1}^{n-1}\lambda_j\geq\epsilon\sum_{\{1\leq j\leq n-1:\lambda_j>0\}}\lambda_j\text{ on }\partial\Omega\cap U.
    \end{equation}
  \end{enumerate}
  Then $\Omega$ admits a maximal estimate on $(0,q)$-forms near $p$ if and only if \eqref{eq:necessary_condition} holds on some neighborhood of $p$.
\end{thm}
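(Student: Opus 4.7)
The ``only if'' direction follows directly from Theorem \ref{thm:necessary_condition}. For the ``if'' direction, my plan is to invoke Proposition \ref{prop:sufficient_condition} by constructing an explicit $\Upsilon\in\mathcal{M}^2_{\Omega,\eta}(U)$ satisfying its two numbered hypotheses. I focus on case (1); case (2) is handled by a symmetric argument, with a different choice of the constant $c$ below.

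A preliminary observation, immediate from the monotonicity $\lambda_1\leq\cdots\leq\lambda_{n-1}$ and hypothesis (1), is that $\lambda_q\geq 0$ everywhere on $U\cap\partial\Omega$, so the negative Levi eigenvalues are confined to indices $1,\dots,q-1$. With this in hand I propose the simple choice
\[
  \Upsilon^{\bar kj}(z) = c\left(\delta_{jk} - \frac{\rho_{\bar z_j}(z)\rho_{z_k}(z)}{|\partial\rho(z)|^2}\right),
\]
namely, $c$ times the orthogonal projection onto $T^{1,0}(\partial\Omega)$, for a constant $c\in(0,1)$ depending on $A$ and $\epsilon$ alone. A direct computation shows $\sum_k\rho_{\bar z_k}\Upsilon^{\bar kj}\equiv 0$, so both \eqref{eq:Upsilon_normal_vanishes} and \eqref{eq:Upsilon_mixed_vanishes} are satisfied identically. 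Since $\Upsilon$ has $n-1$ eigenvalues equal to $c$ and one equal to $0$, any $A'>\max(2,1/c,1/(1-c))$ places $c$ into $(1/A',1-1/A')$, fulfilling condition~(1) of Proposition \ref{prop:sufficient_condition}.

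Condition~(2) of Proposition \ref{prop:sufficient_condition} reduces to a purely algebraic inequality. On $U\cap\partial\Omega$ one computes $\sum_{j,k}\Upsilon^{\bar kj}\rho_{j\bar k}=c\sum_{j=1}^{n-1}\lambda_j=c(P-N)$, where $P$ and $N$ denote the sums of positive Levi eigenvalues and of absolute values of negative ones, so what I need is $c(P-N)\leq\sum_{j=1}^q\lambda_j$. Rewriting \eqref{eq:necessary_condition} as $P\leq A\sum_{j=1}^q\lambda_j+(A-1)N$ yields
\[
  c(P-N)\leq cA\sum_{j=1}^q\lambda_j+c(A-2)N.
\]
When $A\leq 2$, the last term is non-positive and any $c\leq 1/A$ suffices; when $A>2$, hypothesis (1) supplies $N\leq\epsilon^{-1}\sum_{j=1}^q\lambda_j$, and the choice $c\leq\epsilon/(A(1+\epsilon)-2)$ closes the estimate. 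The symmetric calculation for case (2) gives the constraint $c\geq 1-\epsilon/((1+\epsilon)A-2)$, again a nonempty subinterval of $(0,1)$.

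The main obstacle will be verifying $\Upsilon\in\mathcal{M}^2_{\Omega,\eta}(U)$ for some $\eta\in[0,1]$. Since $\rho\in C^{2,\alpha}$, each entry of $\Upsilon$ is $C^{1,\alpha}\subset W^{1,\infty}$, so the conditions involving only first derivatives in Definition \ref{eq:Upsilon_defn}, as well as the product term $\eta\sum(\partial_{z_j}\Upsilon^{\bar k\ell})(\partial_{\bar z_k}\Upsilon^{\bar\ell j})$ in \eqref{eq:Theta_defined}, are readily controlled. The delicate point is the second-derivative trace $\sum_{j,k}\partial^2_{z_j\bar z_k}(\Upsilon^{\bar kj}-\eta(\Upsilon^2)^{\bar kj})$, whose individual terms involve $\partial^3\rho$ and so live only in a distributional space. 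Because $\Upsilon-\eta\Upsilon^2=c(1-\eta c)\Pi_{\mathrm{tang}}$, the task reduces to showing that the scalar field $\sum_{j,k}\partial^2_{z_j\bar z_k}\Pi_{\mathrm{tang}}^{\bar kj}$ belongs to $L^\infty(U)$. I expect this to follow from cancellations analogous to those that appear when computing the mean curvature of $\partial\Omega$ --- the same phenomenon that underlies the $C^{2,\alpha}$ extension of the Derridj/Ben Moussa results mentioned in the introduction.
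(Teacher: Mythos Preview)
Your choice of $\Upsilon = c\,\Pi_{\mathrm{tang}}$ and the algebraic verification of \eqref{eq:weak_z_q} match the paper exactly (the paper writes $t$ for your $c$ and arrives at the same constraint $t<\epsilon/((1+\epsilon)A-2)$ in case (1), and $t>1-\epsilon/((1+\epsilon)A-2)$ in case (2)). The gap is in the regularity step, and it is twofold.

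First, with an arbitrary $C^{2,\alpha}$ defining function the entries of $\Pi_{\mathrm{tang}}$ are only $C^{1,\alpha}$, which does not place them in $W^{2,1}$; membership in $\mathcal{M}^2_{\Omega,\eta}$ therefore already fails at the level of the definition. The paper fixes this by switching at the outset to the special defining function of Lemma \ref{lem:rho_derivative_estimate}, for which $|D^3\rho|\leq O(|\rho|^{\alpha-1})$ is locally integrable, yielding $\Upsilon^{\bar kj}\in C^1(U)\cap W^{2,1}(U)$. (The conditions \eqref{eq:almost_pseudoconvex}, \eqref{eq:almost_pseudoconcave}, \eqref{eq:necessary_condition}, and \eqref{eq:weak_z_q} are all invariant under change of defining function, so nothing is lost.)

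Second, and more to the point: your hoped-for cancellation in $\sum_{j,k}\partial_{z_j}\partial_{\bar z_k}\Pi_{\mathrm{tang}}^{\bar kj}$ does not occur --- this quantity genuinely involves third derivatives of $\rho$ with nonvanishing coefficients, so even with the special $\rho$ it lies only in $L^1(U)$, not $L^\infty(U)$. The paper's resolution is the one you already wrote down but did not exploit: since $\Upsilon-\eta\Upsilon^2=c(1-\eta c)\Pi_{\mathrm{tang}}$, the choice $\eta=c^{-1}$ kills the second-derivative term in \eqref{eq:Theta_defined} identically, and then $\Theta_{\Upsilon,c^{-1}}$ involves only first derivatives of $\Upsilon$, which are bounded. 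This requires $\eta>1$, in apparent conflict with the constraint $0\leq\eta\leq 1$ stated in Definition \ref{eq:Upsilon_defn} and Proposition \ref{prop:sufficient_condition}; but inspection of the proofs of Lemmas \ref{lem:gradient_transformation}, \ref{lem:G_bounded_by_gradient}, \ref{lem:gradient_bounded_by_G} and Proposition \ref{prop:sufficient_condition} shows that the upper bound on $\eta$ is never used, and the paper simply takes $\eta=t^{-1}$.
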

Observe that \eqref{eq:almost_pseudoconvex} holds on all pseudoconvex domains, so this generalizes the results of Derridj \cite{Derr78} for $(0,1)$-forms and Ben Moussa \cite{Ben00} for $(0,q)$-forms.  Conversely, any domain satisfying \eqref{eq:almost_pseudoconvex} must be weakly $q$-convex in the sense of Ho \cite{Ho91}.  Note that \eqref{eq:almost_pseudoconcave} holds for all pseudoconcave domains.

\section{A Special Defining Function}

To reduce the required boundary regularity of our key results, we will need a special defining function:
\begin{lem}
\label{lem:rho_derivative_estimate}
  Let $\Omega\subset\mathbb{R}^n$ be a domain with $C^{m,\alpha}$ boundary, $m\geq 1$ and $0\leq\alpha\leq 1$, and let $U\subset\mathbb{R}^n$ satisfy $U\cap\partial\Omega\neq\emptyset$.  Then there exists a defining function $\rho$ for $\Omega$ on $U$ such that $\rho\in C^{m,\alpha}(U)\cap C^\infty(U\backslash\partial\Omega)$ and for any differential operator $D^k$ of order $k\geq m$, we have
  \begin{equation}
  \label{eq:rho_derivative_estimate}
    |D^k\rho(x)|\leq O(|\rho(x)|^{m+\alpha-k})\text{ for all }x\in U\backslash\partial\Omega.
  \end{equation}
\end{lem}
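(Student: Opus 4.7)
The plan is to smooth a $C^{m,\alpha}$ defining function $\rho_0$ off $\partial\Omega$ by variable-scale mollification, with the scale shrinking like the distance to $\partial\Omega$ and the mollifier chosen so that all its moments through order $m$ vanish. Fix any $C^{m,\alpha}$ defining function $\rho_0$ on $U$, a Stein-type regularized distance $\tilde\delta \in C^\infty(U\setminus\partial\Omega)$ with $\tilde\delta \asymp \dist(\cdot,\partial\Omega) \asymp |\rho_0|$ and $|D^j\tilde\delta| \lesssim \tilde\delta^{1-j}$ for every $j\geq 0$, and a $\phi \in C^\infty_c(\R^n)$ with $\int \phi = 1$ and $\int y^\gamma\phi(y)\,dy = 0$ for $1 \leq |\gamma| \leq m$. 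For a sufficiently small $\epsilon > 0$ set
\[
  \rho(x) = \int_{\R^n} \rho_0\!\bigl(x - \epsilon\tilde\delta(x) y\bigr)\phi(y)\,dy \quad (x \in U\setminus\partial\Omega),
\]
extended by $\rho = 0$ on $\partial\Omega$. The change of variables $z = x - \epsilon\tilde\delta(x) y$ writes $\rho$ as the action on $\rho_0$ of the kernel $(\epsilon\tilde\delta(x))^{-n}\phi((x-z)/(\epsilon\tilde\delta(x)))$, which is $C^\infty$ in $x$ off $\partial\Omega$, so $\rho \in C^\infty(U\setminus\partial\Omega)$ at once.

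Next verify the boundary regularity and the defining-function property. Introducing $F(x,t) = \int \rho_0(x - ty)\phi(y)\,dy$ so that $\rho(x) = F(x, \epsilon\tilde\delta(x))$, Taylor expansion of $\rho_0$ through order $m$ together with the vanishing moments yields the mollifier bound
\[
  |D_x^i \partial_t^j F(x,t)| \lesssim t^{m+\alpha - i - j}
\]
for all $i, j \geq 0$ and $0 < t \leq 1$. Specializing to $i + j \leq m$ shows $D^j(\rho - \rho_0) = O(\tilde\delta^{m+\alpha-j})$ on $U$ for $0 \leq j \leq m$, which gives both $\rho \in C^{m,\alpha}(U)$ with $\nabla\rho|_{\partial\Omega} = \nabla\rho_0|_{\partial\Omega} \neq 0$ and, for $\epsilon$ small, the sign agreement $\sgn\rho = \sgn\rho_0$ that makes $\rho$ a defining function.

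The core estimate \eqref{eq:rho_derivative_estimate} then falls out of Fa\`a di Bruno applied to $\rho(x) = F(x,\epsilon\tilde\delta(x))$. Fixing $x_0 \in U\setminus\partial\Omega$ and letting $r = \tilde\delta(x_0) \asymp |\rho(x_0)|$, each term in $D^k\rho(x_0)$ has the shape
\[
  D_x^i\partial_t^j F(x_0, \epsilon r)\cdot\prod_{\ell=1}^j D^{k_\ell}\tilde\delta(x_0),\qquad i + \sum_{\ell=1}^j k_\ell = k,\quad k_\ell \geq 1.
\]
Multiplying the bound $r^{m+\alpha-i-j}$ by $\prod_\ell r^{1-k_\ell} = r^{j - \sum k_\ell} = r^{i+j-k}$ gives $r^{m+\alpha-k}$ per term, which establishes \eqref{eq:rho_derivative_estimate}.

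The main obstacle is obtaining the sharp rate $r^{m+\alpha-k}$ rather than the considerably worse rate (for example $r^{2-k}$ from a non-negative radial bump) that a naive mollifier would produce through the variable-scale chain rule. The vanishing moments of $\phi$ are precisely what cancel the polynomial part of $\rho_0$'s Taylor expansion through order $m$, leaving the H\"older remainder of $D^m\rho_0$ as the only term contributing to the bound on $F$; the price is the loss of positivity of $\phi$, which is what forces the separate estimate $D^j(\rho - \rho_0) = O(\tilde\delta^{m+\alpha-j})$ to confirm that $\rho$ is still a defining function.
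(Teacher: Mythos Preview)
Your approach is genuinely different from the paper's. The paper works in local graph coordinates $\{x_n>\varphi(x')\}$, mollifies only the graph function $\varphi$ in the tangential variables at scale $t$, and then defines $\rho$ \emph{implicitly} via $f(x,\rho(x))=0$ where $f(x,t)=(\varphi\ast\chi_t)(x')-x_n-t$; this makes the mollification scale equal $\rho$ itself, so the estimates for $\rho$ follow from those for $f$ and the chain rule, after which local pieces are patched by a partition of unity. Your construction instead mollifies an arbitrary defining function in all variables at a scale given by a Stein regularized distance, and relies on a kernel with vanishing moments through order $m$. Your route is coordinate-free and avoids the implicit function theorem and the patching step; the paper's route needs only a radially symmetric bump (no higher vanishing moments), because tangential mollification plus the fact that $D^{k-m}\chi_t$ has zero integral already allows subtraction of $D^m\varphi(x')$ and use of the H\"older remainder.

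There is, however, an error in your stated mollifier bound. The inequality $|D_x^i\partial_t^j F(x,t)|\lesssim t^{m+\alpha-i-j}$ is false for $j=0$ and $i\le m$: taking $i=j=0$ would give $|F(x,t)|\lesssim t^{m+\alpha}$, hence $|\rho|\lesssim\tilde\delta^{\,m+\alpha}$, forcing $\nabla\rho=0$ on $\partial\Omega$ and contradicting the defining-function property you go on to claim. The correct statement is: the bound holds as written for $j\ge 1$ (each $\partial_t$ brings down a factor of $y$, so the weight has vanishing moments of orders $0$ through $m-j$ and the entire Taylor polynomial is killed), while for $j=0$ one has $|D_x^i F(x,t)-D^i\rho_0(x)|\lesssim t^{m+\alpha-i}$ for $i\le m$ and $|D_x^i F(x,t)|\lesssim t^{m+\alpha-i}$ for $i>m$ (the latter by placing $i-m$ derivatives on the kernel, which then has zero integral). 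With this correction your Fa\`a di Bruno argument goes through for $k\ge m+1$: the $j=0$ term forces $i=k>m$, and all $j\ge 1$ terms are bounded exactly as you wrote. Note that the paper's proof likewise only establishes \eqref{eq:rho_derivative_estimate} for $k\ge m+1$; for $k=m$ and $\alpha>0$ the estimate would force $D^m\rho|_{\partial\Omega}=0$, which is not generally true.
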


\begin{proof}
  After a translation and rotation, we may assume that $0\in\overline U\cap\partial\Omega$ and $T_0(\partial\Omega)=\Span\set{\frac{\partial}{\partial x_j}}_{j=1}^{n-1}$.  Choose a neighborhood $V$ of $0$ that is sufficiently small so that
  \[
    \Omega\cap V=\{x\in V:\varphi(x')<x_n\},
  \]
  where $x'=(x_1,\ldots,x_{n-1})$ and $\varphi\in C^{m,\alpha}(\mathbb{R}^{n-1})$.  In our chosen coordinates, $\varphi(0)=0$ and $\nabla\varphi(0)=0$.

  Let $\chi$ be a smooth, radially symmetric function supported in the unit ball in $\mathbb{R}^{n-1}$ satisfying $\int_{\mathbb{R}^{n-1}}\chi=1$.  For $t\in\mathbb{R}\backslash\{0\}$ and $x'\in\mathbb{R}^{n-1}$, set $\chi_t(x')=|t|^{1-n}\chi(t^{-1}x')$, so that $\int_{\mathbb{R}^{n-1}}\chi_t=1$.  For all $x\in V$ and $t\in\mathbb{R}$, we define
  \[
    f(x,t)=\begin{cases}(\varphi\ast\chi_{t})(x')-x_n-t&t\neq 0\\\varphi(x')-x_n&t=0\end{cases}.
  \]
  Using a standard change of coordinates for convolutions, we have the equivalent statement
  \[
    f(x,t)=\int_{\mathbb{R}^{n-1}}\varphi(x'-ty')\chi(y')\, dy'-x_n-t
  \]
  for all $x\in V$ and $t\in\mathbb{R}$.  Differentiating this in $t$ gives us
  \[
    \frac{\partial f}{\partial t}(x,t)=-\int_{\mathbb{R}^{n-1}}y'\cdot\nabla\varphi(x'-ty')\chi(y')\, dy'-1.
  \]
  Since $\chi$ is radially symmetric, $\int_{\mathbb{R}^{n-1}}y_j\chi(y')\, dy'=0$ for all $1\leq j\leq n-1$, and hence $\int_{\mathbb{R}^{n-1}}y'\cdot\nabla\varphi(x')\chi(y')\, dy'=0$ for all $x\in V$.  Since $\nabla\varphi$ is uniformly continuous on any compact subset of $\mathbb{R}^{n-1}$, there exists $T>0$ such that
  \[
    \abs{\int_{\mathbb{R}^{n-1}}y'\cdot\nabla\varphi(x'-ty')\chi(y')\, dy'}\leq\frac{1}{2}\text{ whenever }0\leq t\leq T\text{ and }x\in V.
  \]
  Hence, $\frac{\partial f}{\partial t}(x,t)\leq-\frac{1}{2}<0$ whenever $0\leq t\leq T$ and $x\in V$.  By the Implicit Function Theorem, there exists a neighborhood $W$ of the origin and a unique function $\rho:W\rightarrow\mathbb{R}$ such that $f(x,\rho(x))=0$ and $\nabla\rho(x)=-\left(\frac{\partial f}{\partial t}(x,\rho(x))\right)^{-1}\nabla_x f(x,\rho(x))$ for all $x\in W$.  Since $\rho$ is unique, we must have $\rho(x)=0$ whenever $x\in\partial\Omega\cap W$, and hence $\nabla\rho(x)=\nabla(\varphi(x')-x_n)$ whenever $x\in\partial\Omega\cap W$.  This means that $\rho$ is a $C^1$ defining function for $\Omega$ on $W$.  Since $f\in C^m(\mathbb{R}^{n}\times\mathbb{R})\cap C^\infty(\mathbb{R}^n\times(\mathbb{R}\backslash\{0\}))$, it is not difficult to check that $\rho\in C^m(W)\cap C^\infty(W\backslash\partial\Omega)$.

  For a multi-index $J$ over $n-1$ variables, let $\chi^J(y')=y^J\chi(y')$, and set $\chi_t^J(y')=|t|^{1-n}t^{-|J|}y^J\chi(t^{-1}y')$.  Let $D_{n-1}^m$ be a differential operator of order $m$ in $n-1$ variables.  Any order $m$ derivative of $f(x,t)$ in $n+1$ variables will be a linear combination of terms of the form $(D_{n-1}^m\varphi\ast\chi^J_t)$ (with an additional constant term if $m=1$), where $|J|\leq m$.  For $k\geq m+1$, if $D_{n+1}^{k-m}$ is a differential operator of order $k$ in $n+1$ variables, then we may write any order $k$ derivative of $f(x,t)$ as a linear combination of terms of the form $(D_{n-1}^m\varphi\ast D_{n+1}^{k-m}\chi^J_t)$.  Observe that $\int_{\mathbb{R}^{n-1}}\chi^J_t(x'-y')\, dy'$ is independent of $x$, so $1\ast D_{n+1}^{k-m}\chi^J_t\equiv 0$ when $k>m$.  Hence, we have
  \[
    (D_{n-1}^m\varphi\ast D_{n+1}^{k-m}\chi^J_t)(x')=((D_{n-1}^m\varphi(\cdot)-D_{n-1}^m\varphi(x'))\ast D_{n+1}^{k-m}\chi^J_t(\cdot))(x').
  \]
  Using the H\"older continuity of $m$th derivatives of $\varphi$, we have
  \[
    \abs{(D_{n-1}^m\varphi\ast D_{n+1}^{k-m}\chi^J_t)(x')}\leq|t|^\alpha\norm{\varphi}_{C^{m,\alpha}(\mathbb{R}^{n-1})}\norm{D_{n+1}^{k-m}\chi^J_t}_{L^1(\mathbb{R}^{n-1})}.
  \]
  It is not difficult to check that $\norm{D_{n+1}^{k-m}\chi^J_t}_{L^1(\mathbb{R}^{n-1})}\leq O(|t|^{m-k})$, and hence we have
  \[
    \abs{D_{n+1}^k f(x,t)}\leq O(|t|^{m+\alpha-k})
  \]
  whenever $D_{n+1}^k$ is a differential operator of order $k$ in $n$ variables.  From here, \eqref{eq:rho_derivative_estimate} follows from the definition of $f$ and repeated applications of the chain rule.

  We may extend the domain of $\rho$ to a neighborhood of $\overline U$ via a partition of unity and Whitney's Extension Theorem.

\end{proof}

When we apply our results in $\mathbb{C}^n$, we will need a basis for tangent vectors with coefficients in the appropriate Sobolev spaces.  Simply constructing this basis on the boundary and extending to the interior is problematic, because the space of traces of elements of $W^{2,1}(\Omega)$ is not easy to characterize (unlike $W^{1,1}(\Omega)$, as shown by \cite{Gag57}).  The characterization in terms of Besov spaces given by Theorem 7.39 in \cite{AdFo03}, for example, fails when $p=1$, and even the one-sided extension operator given by integrating against the Poisson kernel will be deficient when $p=1$.  Furthermore, when characterizing regularity of boundaries, the most natural space is the class of functions with H\"older continuous derivatives at some order.  Due to these constraints, the following result is as sharp as can be expected.
\begin{lem}
\label{lem:orthonormal_coordinates}
  Let $\Omega\subset\mathbb{C}^n$ be a domain with $C^{2,\alpha}$ boundary for some $\alpha>0$ and let $p\in\partial\Omega$.  Then there exists a neighborhood $U$ of $p$ and an orthonormal basis $\{L_j\}_{j=1}^n$ for $T^{1,0}(U)$ such that $\{L_j|_{\partial\Omega\cap U}\}_{j=1}^{n-1}$ is an orthonormal basis for $T^{1,0}(\partial\Omega\cap U)$ and each $L_j$ has coefficients in $C^1(U)\cap W^{2,1}(U)$.
\end{lem}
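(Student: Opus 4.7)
The plan is to take $L_n$ to be the unit complex normal to $\partial\Omega$, produce $L_1,\ldots,L_{n-1}$ orthogonal to $L_n$ by Gram--Schmidt against a constant-coefficient frame, and use the special defining function from Lemma~\ref{lem:rho_derivative_estimate} to verify the $W^{2,1}$ regularity. Applying that lemma with $m=2$ yields a defining function $\rho\in C^{2,\alpha}(U)\cap C^\infty(U\setminus\partial\Omega)$ satisfying $|D^3\rho(x)|\leq C|\rho(x)|^{\alpha-1}$ on $U\setminus\partial\Omega$. Since $\alpha>0$ and $|\rho(x)|$ is comparable to the distance from $x$ to $\partial\Omega$, the function $|\rho|^{\alpha-1}$ is integrable on any bounded neighborhood of $p$, so $D^3\rho\in L^1(U)$ and in particular $\rho\in W^{3,1}(U)$. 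After shrinking $U$, I may assume $|\partial\rho|$ is bounded below by a positive constant on $U$.

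Define the unit complex normal $L_n=|\partial\rho|^{-1}\sum_j\rho_{\bar z_j}\frac{\partial}{\partial z_j}$, with coefficients $a_j=\rho_{\bar z_j}\,g^{-1/2}$ where $g=|\partial\rho|^2=\sum_k|\rho_{z_k}|^2$. Since $g$ is a polynomial in the first derivatives of $\rho$, it lies in $C^{1,\alpha}(U)$ and is bounded below, so the chain rule gives $a_j\in C^{1,\alpha}(U)\subset C^1(U)$. To establish $a_j\in W^{2,1}(U)$, I differentiate $a_j$ twice in the distributional sense; each resulting term is a product of bounded powers of $g^{-1/2}$, first and second derivatives of $\rho$ (which lie in $L^\infty$ since $\rho\in C^{2,\alpha}$), and at most one factor involving a third derivative of $\rho$. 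Terms with no third derivative are in $L^\infty(U)\subset L^1(U)$, and the remaining terms are bounded by $|D^3\rho|\in L^1(U)$; hence every second weak derivative of $a_j$ lies in $L^1(U)$.

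After a constant unitary change of coordinates in $\mathbb{C}^n$, I may assume $L_n(p)=\frac{\partial}{\partial z_n}$. Then in a neighborhood of $p$, the projections $P_j:=\frac{\partial}{\partial z_j}-\langle\frac{\partial}{\partial z_j},L_n\rangle L_n$ for $j=1,\ldots,n-1$ have coefficients in $C^1\cap W^{2,1}$ (since the coefficients of $L_n$ do) and are pointwise linearly independent, as at $p$ they reduce to $\frac{\partial}{\partial z_1},\ldots,\frac{\partial}{\partial z_{n-1}}$. Applying Gram--Schmidt to $\{P_1,\ldots,P_{n-1}\}$ produces orthonormal $L_1,\ldots,L_{n-1}$ in $L_n^\perp$; each step uses only pointwise products, differences, and divisions by norms that are bounded below, all of which preserve the class $C^1(U)\cap W^{2,1}(U)\cap L^\infty(U)$ (square roots of positive $C^{1,\alpha}$ functions introduce at most one factor of $D^3\rho$, which remains in $L^1$). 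Finally, $\langle L_j,L_n\rangle\equiv 0$ is equivalent to $L_j\rho\equiv 0$ on $U$, so $L_j|_{\partial\Omega\cap U}\in T^{1,0}(\partial\Omega\cap U)$, and a dimension count shows these form an orthonormal basis for $T^{1,0}(\partial\Omega\cap U)$.

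The main obstacle is the $W^{2,1}$ regularity of $L_n$: for a generic $C^{2,\alpha}$ defining function, third derivatives need not exist, and $L_n$ would only be guaranteed to lie in $C^{1,\alpha}$. This is precisely where Lemma~\ref{lem:rho_derivative_estimate} is essential, as it provides a defining function whose third derivatives, while unbounded near $\partial\Omega$, satisfy the sharp integrability $|D^3\rho(x)|\leq O(|\rho(x)|^{\alpha-1})$ that allows the $W^{2,1}$ arguments above to go through.
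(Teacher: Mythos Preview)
Your proof is correct and follows essentially the same approach as the paper: invoke Lemma~\ref{lem:rho_derivative_estimate} to obtain a defining function with $|D^3\rho|\leq O(|\rho|^{\alpha-1})\in L^1$, take $L_n$ to be the unit complex normal, and build the tangential frame from it, with the $W^{2,1}$ regularity coming from the integrability of $|\rho|^{\alpha-1}$. The only difference is that the paper writes down an explicit closed-form orthonormal frame (a Householder-type construction using an auxiliary vector $b$) rather than running Gram--Schmidt, which avoids iteration and makes the regularity check marginally cleaner; your Gram--Schmidt argument is equally valid once one observes, as you do, that $C^1\cap W^{2,1}\cap L^\infty$ is closed under products and under division by functions bounded away from zero.
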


\begin{proof}
  After a translation and rotation, we may assume that $p=0$ and $T^{1,0}_0(\partial\Omega)=\Span\set{\frac{\partial}{\partial z_j}}_{j=1}^{n-1}$.  Choose a neighborhood $U$ of $p$ that is sufficiently small so that
  \[
    \Omega\cap U=\{z\in U:\varphi(z',\re z_n)<\im z_n\},
  \]
  where $z'=(z_1,\ldots,z_{n-1})$ and $\varphi\in C^{2,\alpha}(\mathbb{C}^{n-1}\times\mathbb{R})$.  In our chosen coordinates, $\varphi(0)=0$ and $\nabla\varphi(0)=0$.  Let $\rho$ be the $C^{2,\alpha}$ defining function for $\Omega$ on $U$ given by Lemma \ref{lem:rho_derivative_estimate}.  We may assume that $U$ is sufficiently small so that $\frac{\partial\rho}{\partial z_n}\neq 0$ on $\overline{U}$.

  For $1\leq j\leq n$ and $z\in U$, set $a^j(z)=|\partial\rho(z)|^{-1}\frac{\partial\rho}{\partial\bar z_j}(z)$.  On $U$, we have $|a|=1$ and $\sum_{j=1}^n a^j\frac{\partial\rho}{\partial z_j}=|\partial\rho|$.  For $z\in U$, set
  \[
    b^j(z)=\begin{cases}\left(|\partial\rho(z)|+\abs{\frac{\partial\rho}{\partial z_n}(z)}\right)^{-1}\frac{\partial\rho}{\partial\bar z_j}(z)&1\leq j\leq n-1\\\abs{\frac{\partial\rho}{\partial z_n}(z)}^{-1}\frac{\partial\rho}{\partial\bar z_n}(z)&j=n\end{cases}.
  \]
  On $U$, we have $|b|^2=2|\partial\rho|\left(|\partial\rho|+\abs{\frac{\partial\rho}{\partial z_n}}\right)^{-1}$ and $\sum_{j=1}^n b^j\frac{\partial\rho}{\partial z_j}=|\partial\rho|$.  For $z\in U$, if we set
  \[
    L_j=\begin{cases}\frac{\partial}{\partial z_j}-\overline{a^j(z)}\sum_{k=1}^n b^k(z)\frac{\partial}{\partial z_k}&1\leq j\leq n-1\\\sum_{j=1}^n a^j(z)\frac{\partial}{\partial z_j}&j=n\end{cases},
  \]
  then we may check that $\{L_j\}_{j=1}^{n}$ is an orthonormal basis for $T^{1,0}(U)$ with coefficients in $C^{1,\alpha}(U)$ and $L_j\rho\equiv 0$ on $U$ for all $1\leq j\leq n-1$.  Since $|\rho|^{\alpha-1}$ is an integrable function on $U$, \eqref{eq:rho_derivative_estimate} guarantees that the coefficients of each $L_j$ must lie in $W^{2,1}(U)$.
\end{proof}

\section{The Basic Identity}
\label{sec:Basic_Identity}

Our goal for this section is to use integration by parts to transform the gradient term in \eqref{eq:Morrey_Kohn} into the special gradient defined by \eqref{eq:G_Upsilon}.

\begin{lem}
\label{lem:gradient_transformation}
  Let $\Omega\subset\mathbb{C}^n$ be a domain with $C^{1,1}$ boundary.  Let $p\in\partial\Omega$ and let $U$ be a neighborhood of $p$.  Let $\rho$ be a $C^{1,1}$ defining function for $\Omega$.  Let $0\leq\eta\leq 1$, and let $\Upsilon\in\mathcal{M}^2_{\Omega,\eta}$.  For $0\leq q\leq n$ and $u,v\in W^{1,2}_{0,q}(\Omega)$ supported in $\overline\Omega\cap U$, let
  \begin{multline}
  \label{eq:G_Upsilon}
    G_{\Upsilon,\eta}(u,v)=
    \sum_{j,k=1}^n\sum_{J\in\mathcal{I}_q}\int_{\Omega\cap U} \bar Z_{k,\eta} u_J(\delta_{j k}-\Upsilon^{\bar k j})\overline{\bar Z_{j,\eta} v_J}\, dV\\
    +\sum_{j,k=1}^n\sum_{J\in\mathcal{I}_q}\int_{\Omega\cap U} \bar Z_{j,\eta}^*u_J\Upsilon^{\bar k j}\overline{\bar Z_{k,\eta}^* v_J}\, dV,
  \end{multline}
  where
  \[
    \bar Z_{j,\eta}=\frac{\partial}{\partial\bar z_j}-\eta\Upsilon^j\text{ with adjoint }\bar Z^*_{j,\eta}=-\frac{\partial}{\partial z_j}-\eta\overline{\Upsilon^{j}}
  \]
  almost everywhere on $U$ for all $1\leq j\leq n$.  Then
  \begin{multline}
  \label{eq:gradient_transformation}
    G_{\Upsilon,\eta}(u,u)=\sum_{j=1}^n\sum_{J\in\mathcal{I}_q}\norm{\bar Z_{j,\eta-1} u_J}^2_{L^2(\Omega)}+\int_\Omega \Theta_{\Upsilon,\eta}|u|^2\, dV\\
    +\sum_{j,k=1}^n\int_{\partial\Omega} |\nabla\rho|^{-1}\frac{\partial^2\rho}{\partial z_j\partial\bar z_k}\Upsilon^{\bar k j}|u|^2\, d\sigma
  \end{multline}
  for all $u\in W^{1,2}_{0,q}(\Omega)$ supported in $\overline\Omega\cap U$.
\end{lem}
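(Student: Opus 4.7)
By a mollification argument using the $L^\infty$ bounds in Definition~\ref{eq:Upsilon_defn} together with the continuity of the trace $W^{1,2}(\Omega)\to L^2(\partial\Omega)$, both sides of \eqref{eq:gradient_transformation} are continuous in $\Upsilon\in\mathcal{M}^2_{\Omega,\eta}$ and in $u\in W^{1,2}_{0,q}(\Omega)$ supported in $\overline\Omega\cap U$. Hence it suffices to prove the identity when $\Upsilon$ has smooth coefficients and $u\in C^2_{0,q}(\overline\Omega)$. The argument is then a double integration by parts that converts the $\bar Z^*_{j,\eta}$-quadratic form in the second sum of \eqref{eq:G_Upsilon} into a $\bar Z_{j,\eta-1}$-quadratic form plus zeroth-order and boundary contributions.

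After absorbing the $\delta_{jk}$-diagonal of the first sum, we have
\[
G_{\Upsilon,\eta}(u,u)=\sum_{j,J}\norm{\bar Z_{j,\eta}u_J}^2_{L^2(\Omega)}+\sum_{j,k,J}\int_{\Omega\cap U}\Upsilon^{\bar k j}\bigl[\bar Z^*_{j,\eta}u_J\,\overline{\bar Z^*_{k,\eta}u_J}-\bar Z_{k,\eta}u_J\,\overline{\bar Z_{j,\eta}u_J}\bigr]\,dV.
\]
Direct expansion of the bracket shows the $\eta^2$-pieces cancel (by Hermitian symmetry of $\Upsilon^{\bar k j}$), leaving $(\partial u_J/\partial z_j)(\partial\bar u_J/\partial\bar z_k)-(\partial u_J/\partial\bar z_k)(\partial\bar u_J/\partial z_j)+\eta\Upsilon^k\partial|u_J|^2/\partial z_j+\eta\overline{\Upsilon^j}\partial|u_J|^2/\partial\bar z_k$. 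For the holomorphic/antiholomorphic difference, two successive integrations by parts---one in $\partial/\partial z_j$ and one in $\partial/\partial\bar z_k$---swap the derivatives, using the identities $\sum_k\partial\Upsilon^{\bar k j}/\partial\bar z_k=\Upsilon^j$ and $\sum_j\partial\Upsilon^{\bar k j}/\partial z_j=\overline{\Upsilon^k}$. All intermediate boundary integrals carry a weight of the form $\sum_k\Upsilon^{\bar k j}\partial\rho/\partial\bar z_k$ or $\sum_j\Upsilon^{\bar k j}\partial\rho/\partial z_j$, which vanishes on $\partial\Omega$ by \eqref{eq:Upsilon_mixed_vanishes}. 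The two $\eta$-pieces are handled by a single IBP each on $\partial|u_J|^2/\partial z_j$ or $\partial|u_J|^2/\partial\bar z_k$; these again produce boundary integrals that vanish by \eqref{eq:Upsilon_mixed_vanishes}, together with volume contributions that include the $\eta\sum_{j,k,\ell}(\partial\Upsilon^{\bar k\ell}/\partial z_j)(\partial\Upsilon^{\bar\ell j}/\partial\bar z_k)$ piece of $\Theta_{\Upsilon,\eta}$ and the $\eta$-dependent second derivatives of $\Upsilon$ occurring in \eqref{eq:Theta_defined}.

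Matching against the identity
\[
\sum_j\norm{\bar Z_{j,\eta-1}u_J}^2=\sum_j\norm{\bar Z_{j,\eta}u_J}^2+\sum_j\int\bigl[\Upsilon^j u_J(\partial\bar u_J/\partial z_j)+\overline{\Upsilon^j}\bar u_J(\partial u_J/\partial\bar z_j)\bigr]\,dV+(1-2\eta)\sum_j\int|\Upsilon^j|^2|u_J|^2\,dV
\]
shows that the derivative-of-$u$ cross-terms assemble to the shift $\eta\to\eta-1$ in the norm. One final integration by parts on $\sum_j\int\overline{\Upsilon^j}\partial|u|^2/\partial\bar z_j\,dV$ collects the remaining pure $|u|^2$-terms into $\int\Theta_{\Upsilon,\eta}|u|^2\,dV$ and produces the sole surviving boundary contribution $-\sum_j\int_{\partial\Omega}|u|^2\overline{\Upsilon^j}\partial\rho/\partial\bar z_j\cdot|\nabla\rho|^{-1}\,d\sigma$, which must coincide with the claimed boundary term in \eqref{eq:gradient_transformation}. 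The required pointwise identity on $\partial\Omega$, namely
\[
\sum_j\overline{\Upsilon^j}\frac{\partial\rho}{\partial\bar z_j}+\sum_{j,k}\Upsilon^{\bar k j}\frac{\partial^2\rho}{\partial z_j\partial\bar z_k}=0,
\]
is equivalent to $\sum_j\partial/\partial z_j\bigl[\sum_k\Upsilon^{\bar k j}\partial\rho/\partial\bar z_k\bigr]\big|_{\partial\Omega}=0$. For smooth $\Upsilon$, write $\sum_k\Upsilon^{\bar k j}\partial\rho/\partial\bar z_k=\rho G_j$; then \eqref{eq:Upsilon_normal_vanishes} forces $\sum_j(\partial\rho/\partial z_j)G_j=O(|\rho|)$, so Leibniz yields $\sum_j\partial/\partial z_j[\rho G_j]\big|_{\partial\Omega}=\sum_j G_j\partial\rho/\partial z_j\big|_{\partial\Omega}=0$, as required.

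The main technical obstacle is this final boundary identity, which is precisely where condition \eqref{eq:Upsilon_normal_vanishes} (strictly stronger than the implied \eqref{eq:Upsilon_mixed_vanishes}) becomes essential: the $O(|\rho|^2)$ vanishing of the normal-normal contraction lets us divide by $\rho$ and conclude. A secondary but delicate task is the careful bookkeeping that assembles the $\eta$-weighted first and second derivatives of $\Upsilon$ into the compound expression $\Theta_{\Upsilon,\eta}$ of \eqref{eq:Theta_defined}. The low regularity of $\Upsilon$ is absorbed by the initial mollification: the products $\nabla u\cdot\nabla\Upsilon$ lie in $L^1$ by Cauchy--Schwarz on $W^{1,2}\times W^{1,2}$, $|u|^2\cdot\nabla^2\Upsilon\in L^1$ via $u\in L^\infty$ (after a further density reduction) and $\Upsilon\in W^{2,1}$, and all boundary traces converge through the continuity of $W^{1,2}\to L^2(\partial\Omega)$.
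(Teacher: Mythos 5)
Your computational outline---expand $G_{\Upsilon,\eta}$, integrate by parts twice on the antiholomorphic-vs-holomorphic difference, absorb the $\eta$-pieces into the shift $\eta\to\eta-1$ plus $\Theta_{\Upsilon,\eta}$, and identify the surviving boundary term with the Levi-form weight---is essentially the same integration-by-parts machine the paper runs, and the algebraic identities you invoke (the $\eta^2$-cancellation, the relation between $\sum_j\norm{\bar Z_{j,\eta-1}u_J}^2$ and $\sum_j\norm{\bar Z_{j,\eta}u_J}^2$, the product-rule reshuffle of the boundary weight) are all correct.

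The gap is in the opening reduction. You claim it ``suffices to prove the identity when $\Upsilon$ has smooth coefficients'' by asserting continuity of both sides in $\Upsilon\in\mathcal{M}^2_{\Omega,\eta}$, and your later argument for the pointwise boundary identity visibly relies on the smooth $\Upsilon$ itself satisfying \eqref{eq:Upsilon_normal_vanishes}: you factor $\sum_k\Upsilon^{\bar kj}\partial\rho/\partial\bar z_k=\rho G_j$ and then use \eqref{eq:Upsilon_normal_vanishes} to kill the divided quantity. But the convolution mollification of a generic $\Upsilon\in\mathcal{M}^2_{\Omega,\eta}$ does \emph{not} produce matrices in $\mathcal{M}^2_{\Omega,\eta}$: the smoothed $\Upsilon_\ell$ satisfy neither \eqref{eq:Upsilon_normal_vanishes} nor \eqref{eq:Upsilon_mixed_vanishes}, so your boundary identity simply fails for them, and the ``continuity in $\Upsilon$'' reduction you want has no class of admissible smooth approximants to anchor to. The paper avoids this by never asserting the final identity for the mollified $\Upsilon_\ell$; instead it proves, for each smooth $\Upsilon_\ell$ and $C^2$ $u$, the identity \emph{plus} three explicit boundary error terms $E_{\Upsilon_\ell,\eta}$, $F_{\Upsilon_\ell,\eta}$, $H_{\Upsilon_\ell,\eta}$, and then shows each of these tends to zero using the trace theorem applied to the \emph{limiting} $\Upsilon$: the traces of $\Upsilon$ and $\nabla\Upsilon$ satisfy \eqref{eq:Upsilon_tangential} and \eqref{eq:Upsilon_tangential_higher_order}, and the quantity appearing in $H$ is obtained by differentiating \eqref{eq:Upsilon_tangential} along a tangential vector field on $\partial\Omega$, which is valid a.e.\ because $\rho$ is $C^{1,1}$. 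That error-term bookkeeping is the real content of the proof, not a technicality to be swept into a continuity statement.

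A secondary concern with your $\rho G_j$ shortcut, even granting a smooth $\Upsilon$ that does satisfy \eqref{eq:Upsilon_normal_vanishes}: the function $G_j$ is the quotient of a Lipschitz quantity vanishing on $\partial\Omega$ by $\rho$, and for a $C^{1,1}$ (rather than $C^2$) defining function there is no reason for $G_j$ to be $C^1$; the Leibniz step $\sum_j\partial_{z_j}(\rho G_j)|_{\partial\Omega}=\sum_j G_j\,\partial\rho/\partial z_j|_{\partial\Omega}$ is exactly where the regularity bites. The paper's version---using the identity $\sum_j\frac{\partial\rho}{\partial z_j}\Upsilon^j=\sum_{j,k}\frac{\partial}{\partial\bar z_k}\bigl(\frac{\partial\rho}{\partial z_j}\Upsilon^{\bar kj}\bigr)-\sum_{j,k}\rho_{j\bar k}\Upsilon^{\bar kj}$ to peel off the Hessian boundary term, and then showing the remaining piece vanishes as the trace of a $W^{1,1}$ object---is the robust route for this boundary regularity.
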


\begin{proof}
  We observe that this Lemma doesn't make use of the exterior algebra structure $\Lambda_{0,q}(\overline\Omega)$ in any meaningful way, so it will suffice to prove \eqref{eq:gradient_transformation} in the $q=0$ case and apply it to each coefficient of a $(0,q)$-form.
  We first assume that $u\in C^2(\overline\Omega)$ and $\Upsilon$ is a Hermitian $n\times n$ matrix with entries in $C^2(U)$.

  We define the crucial error terms
  \begin{align*}
    E_{\Upsilon,\eta}(u)&=\sum_{j,k=1}^n\int_{\partial\Omega} |\nabla\rho|^{-1}\frac{\partial\rho}{\partial\bar z_k}\Upsilon^{\bar k j}\left(\bar Z_{j,\eta}^*u\right)\overline{u}\, d\sigma,\\
    F_{\Upsilon,\eta}(u)&=\sum_{j,k=1}^n\int_{\partial\Omega} |\nabla\rho|^{-1}\left(\bar Z_{k,\eta}u\right)\Upsilon^{\bar k j}\frac{\partial\rho}{\partial z_j}\overline{u}\, d\sigma,\text{ and}\\
    H_{\Upsilon,\eta}(u)&=\sum_{j,k=1}^n\int_{\partial\Omega} |\nabla\rho|^{-1}\frac{\partial}{\partial\bar z_k}\left(\Upsilon^{\bar k j}\frac{\partial\rho}{\partial z_j}\right)|u|^2\, d\sigma.
  \end{align*}

  We begin by using integration by parts to evaluate the second term of the right-hand side in \eqref{eq:G_Upsilon}.
  \begin{multline*}
    \sum_{j,k=1}^n\int_\Omega \bar Z_{j,\eta}^*u\Upsilon^{\bar k j}\overline{\bar Z_{k,\eta}^* u}\, dV=\\
    \sum_{j,k=1}^n\int_\Omega \bar Z_{k,\eta}\bar Z_{j,\eta}^*u\Upsilon^{\bar k j}\overline{u}\, dV+\sum_{j=1}^n\int_\Omega \bar Z_{j,\eta}^*u\Upsilon^{j}\overline{u}\, dV-E_{\Upsilon,\eta}(u).
  \end{multline*}
  We compute the commutator
  \[
    [\bar Z_{k,\eta},\bar Z_{j,\eta}^*]=-\eta\frac{\partial}{\partial\bar z_k}\overline{\Upsilon^{j}}-\eta\frac{\partial}{\partial z_j}\Upsilon^k.
  \]
  Substituting this and integrating by parts again, we obtain
  \begin{multline*}
    \sum_{j,k=1}^n\int_\Omega \bar Z_{j,\eta}^*u\Upsilon^{\bar k j}\overline{\bar Z_{k,\eta}^* u}\, dV=
    \sum_{j,k=1}^n\int_\Omega \bar Z_{k,\eta}u\Upsilon^{\bar k j}\overline{\bar Z_{j,\eta}u}\, dV+\sum_{k=1}^n\int_\Omega \bar Z_{k,\eta}u\overline{\Upsilon^{k}}\overline{u}\, dV\\
    +\sum_{j=1}^n\int_\Omega \bar Z_{j,\eta}^*u\Upsilon^{j}\overline{u}\, dV-\sum_{j,k=1}^n\int_\Omega\eta \left(\frac{\partial}{\partial\bar z_k}\overline{\Upsilon^{j}}+\frac{\partial}{\partial z_j}\Upsilon^k\right)\Upsilon^{\bar k j}|u|^2\, dV\\
    -E_{\Upsilon,\eta}(u)-F_{\Upsilon,\eta}(u).
  \end{multline*}
  Hence,
  \begin{multline*}
    G_{\Upsilon,\eta}(u,u)=
    \sum_{j=1}^n\norm{\bar Z_{j,\eta}u}^2_{L^2(\Omega)}+\sum_{k=1}^n\int_\Omega \bar Z_{k,\eta}u\overline{\Upsilon^{k}}\overline{u}\, dV+\sum_{j=1}^n\int_\Omega \bar Z_{j,\eta}^*u\Upsilon^{j}\overline{u}\, dV\\
    -\sum_{j,k=1}^n\int_\Omega\eta \left(\frac{\partial}{\partial\bar z_k}\overline{\Upsilon^{j}}+\frac{\partial}{\partial z_j}\Upsilon^k\right)\Upsilon^{\bar k j}|u|^2\, dV
    -E_{\Upsilon,\eta}(u)-F_{\Upsilon,\eta}(u).
  \end{multline*}
  Integration by parts gives us
  \begin{multline*}
    \sum_{j=1}^n\int_\Omega \bar Z_{j,\eta}^*u\Upsilon^{j}\overline{u}\, dV=\sum_{j=1}^n\int_\Omega u\Upsilon^{j}\overline{\bar Z_{j,\eta}u}\, dV+\sum_{j=1}^n\int_\Omega \frac{\partial}{\partial z_j}\Upsilon^{j}|u|^2\, dV\\
    -\sum_{j=1}^n\int_{\partial\Omega}|\nabla\rho|^{-1}\frac{\partial\rho}{\partial z_j}\Upsilon^{j}|u|^2\, d\sigma,
  \end{multline*}
  so since $\sum_{j=1}^n\frac{\partial\rho}{\partial z_j}\Upsilon^j=\sum_{j,k=1}^n\left(\frac{\partial}{\partial\bar z_k}\left(\frac{\partial\rho}{\partial z_j}\Upsilon^{\bar k j}\right)-\frac{\partial^2\rho}{\partial z_j\partial\bar z_k}\Upsilon^{\bar k j}\right)$, we have
  \begin{multline*}
    G_{\Upsilon,\eta}(u,u)=
    \sum_{j=1}^n\norm{\bar Z_{j,\eta}u}^2_{L^2(\Omega)}+\sum_{j=1}^n\int_\Omega 2\re\left(\bar Z_{j,\eta}u\overline{\Upsilon^{j}}\overline{u}\right)\, dV\\
    -\sum_{j,k=1}^n\int_\Omega\eta \left(\frac{\partial}{\partial\bar z_k}\overline{\Upsilon^{j}}+\frac{\partial}{\partial z_j}\Upsilon^k\right)\Upsilon^{\bar k j}|u|^2\, dV
    +\sum_{j=1}^n\int_\Omega \frac{\partial}{\partial z_j}\Upsilon^{j}|u|^2\, dV\\
    +\sum_{j,k=1}^n\int_{\partial\Omega} |\nabla\rho|^{-1}\frac{\partial^2\rho}{\partial z_j\partial\bar z_k}\Upsilon^{\bar k j}|u|^2\, d\sigma
    -E_{\Upsilon,\eta}(u)-F_{\Upsilon,\eta}(u)-H_{\Upsilon,\eta}(u).
  \end{multline*}
  Note that
  \begin{multline*}
    \sum_{j=1}^n\norm{\bar Z_{j,\eta} u}^2_{L^2(\Omega)}+\sum_{j=1}^n\int_\Omega 2\re\left(\bar Z_{j,\eta} u\overline{\Upsilon^{j}}\overline{u}\right)\, dV=\\
    \sum_{j=1}^n\norm{\bar Z_{j,\eta-1}u}^2_{L^2(\Omega)}-\sum_{j=1}^n\norm{\Upsilon^j u}^2_{L^2(\Omega)},
  \end{multline*}
  so
  \begin{multline*}
    G_{\Upsilon,\eta}(u,u)=
    \sum_{j=1}^n\norm{\bar Z_{j,\eta-1}u}^2_{L^2(\Omega)}-\sum_{j=1}^n\norm{\Upsilon^j u}^2_{L^2(\Omega)}\\
    -\sum_{j,k=1}^n\int_\Omega\eta \left(\frac{\partial}{\partial\bar z_k}\overline{\Upsilon^{j}}+\frac{\partial}{\partial z_j}\Upsilon^k\right)\Upsilon^{\bar k j}|u|^2\, dV
    +\sum_{j=1}^n\int_\Omega \frac{\partial}{\partial z_j}\Upsilon^{j}|u|^2\, dV\\
    +\sum_{j,k=1}^n\int_{\partial\Omega} |\nabla\rho|^{-1}\frac{\partial^2\rho}{\partial z_j\partial\bar z_k}\Upsilon^{\bar k j}|u|^2\, d\sigma
    -E_{\Upsilon,\eta}(u)-F_{\Upsilon,\eta}(u)-H_{\Upsilon,\eta}(u).
  \end{multline*}
  Expanding \eqref{eq:Theta_defined}, we find that
  \begin{equation}
  \label{eq:Theta_identity}
    \Theta_{\Upsilon,\eta}=\sum_{j=1}^n\frac{\partial}{\partial z_j}\Upsilon^{j}-\sum_{j,k=1}^n\eta\left(\frac{\partial}{\partial\bar z_k}\overline{\Upsilon^{j}}+\frac{\partial}{\partial z_j}\Upsilon^k\right)\Upsilon^{\bar k j}
    -\sum_{j=1}^n\abs{\Upsilon^j}^2,
  \end{equation}
  so we have
  \begin{multline*}
    G_{\Upsilon,\eta}(u,u)=
    \sum_{j=1}^n\norm{\bar Z_{j,\eta-1}u}^2_{L^2(\Omega)}+\int_\Omega \Theta_{\Upsilon,\eta}|u|^2\, dV\\
    +\sum_{j,k=1}^n\int_{\partial\Omega} |\nabla\rho|^{-1}\frac{\partial^2\rho}{\partial z_j\partial\bar z_k}\Upsilon^{\bar k j}|u|^2\, dV
    -E_{\Upsilon,\eta}(u)-F_{\Upsilon,\eta}(u)-H_{\Upsilon,\eta}(u).
  \end{multline*}
  whenever $q=0$, $u\in C^2(\overline\Omega)$, and the coefficients of $\Upsilon$ are in $C^2(U)$.

  Now suppose that $q=0$ and $u\in C^2(\overline\Omega)$, but $\Upsilon\in\mathcal{M}^2_{\Omega,\eta}(U)$.  Let $V\subset\subset U$ denote the interior of the support of $u$.  If we regularize the coefficients of $\Upsilon$ by convolution and restrict to $V$, we obtain a sequence $\{\Upsilon_\ell\}$ of $n\times n$ Hermitian matrices with coefficients in $C^\infty(V)$ that are uniformly bounded in $L^\infty(V)$ and converge in $W^{1,2}(V)$ and $W^{2,1}(V)$ to the corresponding coefficients of $\Upsilon$.  Observe that \eqref{eq:Upsilon_mixed_vanishes} implies that the trace of $\Upsilon$ in $L^2(\partial\Omega)$ satisfies
  \begin{equation}
  \label{eq:Upsilon_tangential}
    \sum_{k=1}^n\frac{\partial\rho}{\partial\bar z_k}\Upsilon^{\bar k j}\equiv 0\text{ for every }1\leq j\leq n,
  \end{equation}
  By the trace theorem for $W^{1,2}(V)$, $\sum_{k=1}^n\frac{\partial\rho}{\partial\bar z_k}\Upsilon_\ell^{\bar k j}\rightarrow 0$ in $L^2(\partial\Omega)$ for every $1\leq j\leq n$, so $E_{\Upsilon_\ell,\eta}(u)\rightarrow 0$ and $F_{\Upsilon_\ell,\eta}(u)\rightarrow 0$.  Here we have also used the fact that $\Upsilon_\ell^j$ is uniformly bounded in $L^2(V)$, since this term arises in $\bar Z_{k,\eta}$ and $\bar Z_{j,\eta}^*$.

  Observe that $\nabla\Upsilon \in L^1(\partial\Omega\cap V)$ and the combination of
  \eqref{eq:Upsilon_normal_vanishes} and (\ref{eq:Upsilon_mixed_vanishes}) imply that the trace of $\nabla \Upsilon$ satisfies
  \begin{equation}
  \label{eq:Upsilon_tangential_higher_order}
    \sum_{j,k=1}^n\frac{\partial\rho}{\partial\bar z_k}\nabla\Upsilon^{\bar k j}\frac{\partial\rho}{\partial z_j}=0.
  \end{equation}
  Since $\rho$ is $C^{1,1}$, $\nabla^2\rho$ exists almost everywhere.  For every $1\leq k\leq n$, $\frac{\partial}{\partial\bar z_k}-|\partial\rho|^{-2}\frac{\partial\rho}{\partial\bar z_k}\sum_{\ell=1}^n\frac{\partial\rho}{\partial z_\ell}\frac{\partial}{\partial\bar z_\ell}$ is a tangential operator on $\partial\Omega\cap V$, so at points where $\nabla^2\rho$ exists, we may differentiate \eqref{eq:Upsilon_tangential} by this operator and sum over $1\leq k\leq n$ to obtain
  \[
  0=\sum_{k=1}^n\left(\frac{\partial}{\partial\bar z_k}-|\partial\rho|^{-2}\frac{\partial\rho}{\partial\bar z_k}\sum_{\ell=1}^n\frac{\partial\rho}{\partial z_\ell}\frac{\partial}{\partial\bar z_\ell}\right)\left(\sum_{j=1}^n\Upsilon^{\bar k j}\frac{\partial\rho}{\partial z_j}\right).
  \]
  If we expand the second term in this derivative with the product rule and apply \eqref{eq:Upsilon_tangential} and \eqref{eq:Upsilon_tangential_higher_order}, we see that these terms vanish almost everywhere on the boundary, leaving us with
  \[
  0=\sum_{k=1}^n\frac{\partial}{\partial\bar z_k}\left(\sum_{j=1}^n\Upsilon^{\bar k j}\frac{\partial\rho}{\partial z_j}\right)\text{ almost everywhere on }\partial\Omega\cap V.
  \]
  By the trace theorem for $W^{2,1}(V)$, we see that $H_{\Upsilon_\ell,\eta}(u)\rightarrow 0$ as well.

  If we expand the quadratic forms in \eqref{eq:G_Upsilon}, observe that the two terms of the form $\sum_{j,k=1}^n\Upsilon^k\Upsilon^{\bar k j}\Upsilon^{\bar j}$ will cancel each other, so the remaining terms in \eqref{eq:G_Upsilon} with $\Upsilon$ will involve integrating a function in $L^\infty(U)$ against $\Upsilon^{\bar k j}$ for $1\leq j,k\leq n$, $\sum_{k=1}^n\Upsilon^k\Upsilon^{\bar k j}$ or $\Upsilon^j$ for $1\leq j\leq n$, or $\sum_{j=1}^n|\Upsilon^j|^2$.  In every case, convergence in $W^{1,2}(V)$ and uniform boundedness in $L^\infty(V)$ will suffice to take the limit.  This is also true for the final term in \eqref{eq:Theta_identity}.  Convergence in $W^{1,2}(V)$ and uniform boundedness in $L^\infty(V)$ will also allow us to take the limit with the remaining terms in \eqref{eq:Theta_identity}.  By the Trace Theorem, convergence in $W^{1,1}(V)$ guarantees convergence in $L^1(\partial\Omega\cap V)$, so we can take the limit in the boundary term in \eqref{eq:gradient_transformation}.  As a result, we may take a limit and obtain \eqref{eq:gradient_transformation} whenever $q=0$, $u\in C^2(\overline\Omega)$, and the coefficients of $\Upsilon$ are in $L^\infty(V)\cap W^{1,2}(V)\cap W^{2,1}(V)$.

  For $q>0$, we apply the $q=0$ identity to each coefficient $u_I$ and sum over all $I\in\mathcal{I}_q$ to obtain \eqref{eq:gradient_transformation}.  When $u\in W^{1,2}_{(0,q)}(\Omega)$, we can approximate $u$ in the $W^{1,2}(\Omega)$ norm by a sequence in $C^\infty_{(0,q)}(\overline\Omega)$ and take the limit.  Since $\Upsilon^j\in L^\infty(U)$ for all $1\leq j\leq n$ and $\Theta_\Upsilon\in L^\infty(U)$, all of the terms in this limit will converge, and we obtain \eqref{eq:gradient_transformation} whenever $u\in W^{1,2}_{(0,q)}(\Omega)$.

\end{proof}

Our first application of this key identity will be a sequence of lemmas which can be used to show that \eqref{eq:maximal_estimate} is independent of the choice of orthonormal coordinates and that left-hand side of \eqref{eq:maximal_estimate} is comparable to $G_{\Upsilon,\eta}$ when we have suitable bounds on the eigenvalues of $\Upsilon$.

We first show that we can always estimate the direction that is missing from our maximal estimates, provided that we multiply it by a suitable defining function.
\begin{lem}
\label{lem:bad_direction_estimate}
  Let $\Omega\subset\mathbb{C}^n$ be a domain with $C^{1,1}$ boundary.  Let $p\in\partial\Omega$, let $U$ be a neighborhood of $p$, and let $\rho\in C^{1,1}(U)\cap C^\infty(U\backslash\partial\Omega)$ be the defining function for $\Omega$ given by Lemma \ref{lem:rho_derivative_estimate}.  For every $\epsilon>0$, there exists a constant $C_\epsilon>0$ so that if $u\in W^{1,2}(\Omega)$ is supported in $\set{z\in \overline\Omega\cap U:-\frac{\sqrt{\epsilon}}{2}<\rho(z)\leq 0}$, then
  \begin{equation}
  \label{eq:bad_direction_estimate}
    \norm{(-\rho)Z u}^2_{L^2(\Omega)}\leq
    \sum_{j=1}^n\epsilon\norm{\frac{\partial u}{\partial\bar z_j}}^2_{L^2(\Omega)}+C_\epsilon\norm{u}^2_{L^2(\Omega)},
  \end{equation}
  where $Z=|\partial\rho|^{-1}\sum_{j=1}^n\frac{\partial\rho}{\partial\bar z_j}\frac{\partial}{\partial z_j}$.
\end{lem}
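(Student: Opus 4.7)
The plan is to convert the estimate into one for a function vanishing on $\partial\Omega$, for which a standard integration-by-parts identity reverses $\partial$-derivatives into $\bar\partial$-derivatives. Introduce the auxiliary function $v=-\rho u$. Since $\rho$ vanishes on $\partial\Omega$, the trace of $v$ on $\partial\Omega$ is zero, so $v\in W^{1,2}_0(\Omega)$. A direct calculation using the definition of $Z$ shows $Z\rho=|\partial\rho|^{-1}\sum_j\rho_{\bar j}\rho_j=|\partial\rho|$, and the Leibniz rule gives
\[
  Zv=-(Z\rho)u-\rho Zu=-|\partial\rho|u-\rho Zu,
\]
so $(-\rho)Zu=Zv+|\partial\rho|u$. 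Taking $L^2$ norms and using the triangle inequality plus the boundedness of $|\partial\rho|$,
\[
  \|(-\rho)Zu\|^2_{L^2(\Omega)}\leq 2\|Zv\|^2_{L^2(\Omega)}+C\|u\|^2_{L^2(\Omega)}.
\]
Thus the task reduces to bounding $\|Zv\|^2$ by $\epsilon\sum_j\|\partial u/\partial\bar z_j\|^2+C_\epsilon\|u\|^2$.

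Next, because the coefficients $a_j=\rho_{\bar j}/|\partial\rho|$ of $Z$ satisfy $\sum_j|a_j|^2=1$ pointwise, Cauchy--Schwarz yields $|Zv|^2\leq\sum_j|\partial v/\partial z_j|^2$ almost everywhere, so
\[
  \|Zv\|^2_{L^2(\Omega)}\leq\sum_{j=1}^n\left\|\frac{\partial v}{\partial z_j}\right\|^2_{L^2(\Omega)}.
\]
Since $v\in W^{1,2}_0(\Omega)$, the standard identity
\[
  \sum_{j=1}^n\left\|\frac{\partial v}{\partial z_j}\right\|^2_{L^2(\Omega)}=\sum_{j=1}^n\left\|\frac{\partial v}{\partial\bar z_j}\right\|^2_{L^2(\Omega)}
\]
holds; it follows by approximating $v$ in $W^{1,2}$ by $C^\infty_c(\Omega)$ functions, integrating by parts twice, and using the fact that $\partial_{z_j}\partial_{\bar z_j}=\partial_{\bar z_j}\partial_{z_j}$ so the two double integrals cancel.

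Finally, expand $\partial v/\partial\bar z_j=-\rho_{\bar j}u-\rho\,\partial u/\partial\bar z_j$ and use the inequality $(a+b)^2\leq 2a^2+2b^2$ term by term. The first piece contributes at most a constant multiple of $\|u\|^2$, since $|\rho_{\bar j}|$ is bounded on $\overline{U}$. For the second piece, the support condition on $u$ forces $|\rho(z)|<\sqrt{\epsilon}/2$, hence $|\rho|^2<\epsilon/4$ on the support, and
\[
  \left\|\rho\,\frac{\partial u}{\partial\bar z_j}\right\|^2_{L^2(\Omega)}\leq\frac{\epsilon}{4}\left\|\frac{\partial u}{\partial\bar z_j}\right\|^2_{L^2(\Omega)}.
\]
Summing in $j$ and absorbing the numerical factors produces $\|Zv\|^2\leq\tfrac{\epsilon}{2}\sum_j\|\partial u/\partial\bar z_j\|^2+C\|u\|^2$, and combining with the first step gives \eqref{eq:bad_direction_estimate}.

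The only subtlety is the integration-by-parts identity exchanging holomorphic and antiholomorphic gradients, which is valid for $v\in W^{1,2}_0(\Omega)$ on a merely $C^{1,1}$ domain because density of $C^\infty_c(\Omega)$ in $W^{1,2}_0(\Omega)$ holds without any boundary regularity assumption; every other step is a bounded algebraic manipulation of $L^\infty$ coefficients. Note that the regularity of the special $\rho$ supplied by Lemma \ref{lem:rho_derivative_estimate} is not actually needed here beyond $C^{1,1}$, but it will be convenient for compatibility with later uses of the same $\rho$.
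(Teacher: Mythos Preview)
Your proof is correct and takes a genuinely different, more elementary route than the paper.  The paper applies its own machinery: it chooses the rank-one matrix $\Upsilon^{\bar k j}=(-\rho)^2|\partial\rho|^{-2}\rho_{z_k}\rho_{\bar z_j}$, checks that this lies in $\mathcal{M}^2_{\Omega,0}(U)$ (which is where the special regularity of the $\rho$ from Lemma~\ref{lem:rho_derivative_estimate} enters, to make $\Upsilon\in C^{2,1}$), and then invokes the integration-by-parts identity of Lemma~\ref{lem:gradient_transformation} with $\eta=0$.  That identity directly converts $\|(-\rho)Zu\|^2$ into $\|(-\rho)\bar Z u\|^2$ plus first- and zeroth-order error terms, after which the support condition $|\rho|<\sqrt{\epsilon}/2$ and small constant/large constant finish the job.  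Your argument instead absorbs the factor $-\rho$ into the function, producing $v=-\rho u\in W^{1,2}_0(\Omega)$, and then uses the classical identity $\sum_j\|\partial_{z_j}v\|^2=\sum_j\|\partial_{\bar z_j}v\|^2$ for compactly supported functions; the swap from $Z$-derivatives to $\bar\partial$-derivatives happens at the level of the full gradient rather than only in the normal direction.  The payoff of your approach is that it requires nothing beyond $\rho\in C^{1}$ with bounded gradient, as you note, and it avoids appealing to the $\Upsilon$-formalism altogether.  The paper's approach, by contrast, serves as a first illustration of Lemma~\ref{lem:gradient_transformation}, which is the workhorse for the remainder of Section~\ref{sec:Basic_Identity}.
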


\begin{proof}
  For $1\leq j,k\leq n$, set $\Upsilon^{\bar k j}=(-\rho)^2|\partial\rho|^{-2}\frac{\partial\rho}{\partial z_k}\frac{\partial\rho}{\partial\bar z_j}$.  Using \eqref{eq:rho_derivative_estimate}, each $\Upsilon^{\bar k j}\in C^{2,1}(U)$, so $\Upsilon\in\mathcal{M}^2_{\Omega,0}(U)$.  Using Lemma \ref{lem:gradient_transformation} with $\eta=0$, \eqref{eq:gradient_transformation} implies that
  \begin{multline*}
    \sum_{j=1}^n\norm{\frac{\partial u}{\partial\bar z_j}}^2_{L^2(\Omega)}-\norm{(-\rho)\bar Z u}^2_{L^2(\Omega)}+\norm{(-\rho)Z u}^2_{L^2(\Omega)}=\\
    \sum_{j=1}^n\norm{\left(\frac{\partial}{\partial\bar z_j}+\Upsilon^j\right)u}^2_{L^2(\Omega)}+\int_\Omega \Theta_{\Upsilon,0}|u|^2\, dV.
  \end{multline*}
  Rearranging terms and substituting \eqref{eq:Theta_defined} gives us
  \begin{multline*}
    \norm{(-\rho)Z u}^2_{L^2(\Omega)}\leq
    \norm{(-\rho)\bar Z u}^2_{L^2(\Omega)}\\+\sum_{j=1}^n2\re\int_\Omega\frac{\partial u}{\partial\bar z_j}\overline{\Upsilon^j u}\, dV+\int_\Omega \sum_{j,k=1}^n\frac{\partial^2}{\partial z_j\partial\bar z_k}\Upsilon^{\bar k j}|u|^2\, dV.
  \end{multline*}
  Using the Cauchy-Schwarz inequality and the small constant/large constant inequality, we see that \eqref{eq:bad_direction_estimate} follows.

\end{proof}

Next, we show that estimates for tangential $(1,0)$-derivatives in any basis are comparable to estimates in our special basis.  In particular, this implies that Definition \ref{defn:maximal_estimate} is independent of the choice of orthonormal coordinates.
\begin{lem}
\label{lem:gradient_comparison}
  Let $\Omega\subset\mathbb{C}^n$ be a domain with $C^{1,1}$ boundary.  Let $p\in\partial\Omega$ and let $U$ be a neighborhood of $p$ that is sufficiently small so that the special orthonormal basis $\{\tilde L_j\}_{j=1}^n$ given by Lemma \ref{lem:orthonormal_coordinates} exists.  Let $\{L_j\}_{j=1}^n$ be an orthonormal basis for $T^{1,0}(U)$ with Lipschitz coefficients such that $\{L_j|_{\partial\Omega\cap U}\}_{j=1}^{n-1}$ is an orthonormal basis for $T^{1,0}(\partial\Omega\cap U)$.  For every $\epsilon>0$, there exists a neighborhood $U_\epsilon\subset U$ of $p$ and a constant $C_\epsilon>0$ such that
  \begin{equation}
  \label{eq:gradient_comparison_1}
    \sum_{j=1}^{n-1}\norm{\tilde L_j u}^2_{L^2(\Omega)}\leq\\
    \sum_{j=1}^{n-1}(1+\epsilon)\norm{L_j u}^2_{L^2(\Omega)}+\sum_{j=1}^n\epsilon\norm{\frac{\partial u}{\partial\bar z_j}}^2_{L^2(\Omega)}+C_\epsilon\norm{u}^2_{L^2(\Omega)}
  \end{equation}
  and
  \begin{equation}
  \label{eq:gradient_comparison_2}
    \sum_{j=1}^{n-1}\norm{L_j u}^2_{L^2(\Omega)}\leq\\
    \sum_{j=1}^{n-1}(1+\epsilon)\norm{\tilde L_j u}^2_{L^2(\Omega)}+\sum_{j=1}^n\epsilon\norm{\frac{\partial u}{\partial\bar z_j}}^2_{L^2(\Omega)}+C_\epsilon\norm{u}^2_{L^2(\Omega)}
  \end{equation}
  for all $u\in W^{1,2}(\Omega)$ supported in $\overline\Omega\cap U_\epsilon$.
\end{lem}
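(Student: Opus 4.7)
Since both $\{L_j\}_{j=1}^n$ and $\{\tilde L_j\}_{j=1}^n$ are orthonormal frames for $T^{1,0}(U)$ with Lipschitz coefficients, the plan is to write $L_j=\sum_{k=1}^n a_j^k \tilde L_k$ for a unitary matrix $A=(a_j^k)$ of Lipschitz functions on $U$. The key geometric observation is that on $\partial\Omega\cap U$ the matrix $A$ is block-diagonal: since $\{L_j|_{\partial\Omega}\}_{j<n}$ and $\{\tilde L_j|_{\partial\Omega}\}_{j<n}$ are both orthonormal bases of the same space $T^{1,0}(\partial\Omega\cap U)$, and $L_n|_{\partial\Omega}$, $\tilde L_n|_{\partial\Omega}$ are both unit vectors in its orthogonal complement, we must have $a_j^n=0$ for $j<n$, $a_n^k=0$ for $k<n$, and $|a_n^n|=1$ on $\partial\Omega\cap U$. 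By Lipschitz continuity, $|a_j^n|\le C|\rho|$ and $|a_n^k|\le C|\rho|$ for $j,k<n$ throughout $U$.

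Next I would exploit the pointwise identity $\sum_{j=1}^n |L_j u|^2=\sum_{j=1}^n |\tilde L_j u|^2$ (unitary invariance of the Euclidean norm) to write
\[
  \sum_{j=1}^{n-1}|\tilde L_j u|^2-\sum_{j=1}^{n-1}|L_j u|^2=|L_n u|^2-|\tilde L_n u|^2.
\]
Substituting $L_n u=a_n^n \tilde L_n u+v$ with $v=\sum_{k<n}a_n^k\tilde L_k u$ and using $|a_n^n|^2=1-\sum_{k<n}|a_n^k|^2$, the right-hand side becomes
\[
  -\sum_{k<n}|a_n^k|^2|\tilde L_n u|^2+2\Re(a_n^n\tilde L_n u\,\bar v)+|v|^2.
\]
The pointwise bound $|v|\le C|\rho|\sum_{k<n}|\tilde L_k u|$ and Cauchy--Schwarz then reduce matters to estimating two error terms: $\|v\|^2$ and the cross integral $\int a_n^n\,\tilde L_n u\,\bar v\,dV$.

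The term $\|v\|^2$ can be made at most $\epsilon\sum_{k<n}\|\tilde L_k u\|^2$ simply by shrinking $U_\epsilon$ so that $\sup_{U_\epsilon}|\rho|^2$ is sufficiently small, after which it is absorbed into the left-hand side. For the cross term, the factor of $|\rho|$ hidden in $v$ converts it, via small constant/large constant, into a multiple of $\|\rho\tilde L_n u\|^2$ plus $\epsilon\sum_{k<n}\|\tilde L_k u\|^2$. Since $\tilde L_n$ is exactly the operator $Z$ of Lemma \ref{lem:bad_direction_estimate} (both are $|\partial\rho|^{-1}\sum_j\frac{\partial\rho}{\partial\bar z_j}\frac{\partial}{\partial z_j}$), that lemma replaces $\|\rho\tilde L_n u\|^2$ by $\epsilon\sum_j\|\partial u/\partial\bar z_j\|^2+C_\epsilon\|u\|^2$. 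Collecting absorbable errors on the left and rescaling $\epsilon$ yields \eqref{eq:gradient_comparison_1}. The inequality \eqref{eq:gradient_comparison_2} follows by symmetry: the adjoint matrix $A^*$ has exactly the same block-vanishing structure on $\partial\Omega$, so the roles of $L_j$ and $\tilde L_j$ can be swapped verbatim.

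The main obstacle, and the reason we cannot conclude the comparison from unitarity alone, is the presence of the ``bad direction'' $\tilde L_n$, which is the complex normal and hence absent from a maximal estimate. The argument only works because Lemma \ref{lem:bad_direction_estimate} allows us to trade the weighted normal derivative $\|\rho\tilde L_n u\|$ for an arbitrarily small fraction of the full $\bar\partial$-gradient plus an $L^2$ error, which is exactly what the statement of Lemma \ref{lem:gradient_comparison} permits on the right-hand side.
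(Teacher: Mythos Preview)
Your proposal is correct and follows essentially the same route as the paper: both use the unitary identity $\sum_{j=1}^n|L_j u|^2=\sum_{j=1}^n|\tilde L_j u|^2$ to reduce to $\|L_n u\|^2-\|\tilde L_n u\|^2$, decompose $L_n$ in the $\tilde L$-frame, exploit that the off-diagonal coefficients are $O(|\rho|)$, and invoke Lemma~\ref{lem:bad_direction_estimate} for the $\|\rho\,\tilde L_n u\|^2$ term. The only cosmetic difference is that the paper bounds $\bigl|\|L_n u\|^2-\|\tilde L_n u\|^2\bigr|$ directly (observing that the term $-\sum_{k<n}|a_n^k|^2|\tilde L_n u|^2$ is itself $O(|\rho|^2|\tilde L_n u|^2)$) and thereby gets \eqref{eq:gradient_comparison_1} and \eqref{eq:gradient_comparison_2} simultaneously, whereas you argue \eqref{eq:gradient_comparison_2} by swapping roles; your own computation already yields the two-sided bound, so the separate symmetry step is unnecessary.
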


\begin{proof}
  Let $\rho\in C^{1,1}(U)\cap C^\infty(U\backslash\partial\Omega)$ be the defining function given by Lemma \ref{lem:rho_derivative_estimate}, and recall that $\tilde L_n=|\partial\rho|^{-1}\sum_{j=1}^n\frac{\partial\rho}{\partial\bar z_j}\frac{\partial}{\partial z_j}$.

  Since $\sum_{j=1}^{n}\norm{\tilde L_j u}^2_{L^2(\Omega)}=\sum_{j=1}^{n}\norm{L_j u}^2_{L^2(\Omega)}$, we have
  \begin{equation}
  \label{eq:coordinate_difference}
    \sum_{j=1}^{n-1}\left(\norm{\tilde L_j u}^2_{L^2(\Omega)}-\norm{L_j u}^2_{L^2(\Omega)}\right)=
    \norm{L_n u}^2_{L^2(\Omega)}-\norm{\tilde L_n u}^2_{L^2(\Omega)}.
  \end{equation}
  Since $\tilde L_n$ and $L_n$ are linearly dependent on $\partial\Omega$,
  \[
    \abs{\left<L_n,\tilde L_j\right>}\leq O(|\rho|)\text{ on }U\text{ for all }1\leq j\leq n-1.
  \]
  This means that
  \[
    1-\abs{\left<L_n,\tilde L_n\right>}^2=\abs{L_n-\left<L_n,\tilde L_n\right>\tilde L_n}^2\leq O(|\rho|^2)\text{ on }U,
  \]
  so
  \[
    1\geq\abs{\left<L_n,\tilde L_n\right>}^2\geq 1-O(|\rho|^2)\text{ on }U.
  \]
  Using the decomposition $L_n=\sum_{j=1}^n\left<L_n,\tilde L_j\right>\tilde L_j$, we see that
  \begin{multline*}
    \norm{L_n u}^2_{L^2(\Omega)}-\norm{\tilde L_n u}^2_{L^2(\Omega)}=\norm{\sum_{j=1}^{n-1}\left<L_n,\tilde L_j\right>\tilde L_j u}^2_{L^2(\Omega)}\\
    +\int_\Omega\sum_{j=1}^{n-1}\left<L_n,\tilde L_j\right>\tilde L_j u\overline{\left<L_n,\tilde L_n\right>\tilde L_n u} \, dV-\norm{\sqrt{1-\abs{\left<L_n,\tilde L_n\right>}^2}\tilde L_n u}^2_{L^2(\Omega)}
  \end{multline*}
  so there must exist a constant $C>0$ such that
  \begin{multline*}
    \abs{\norm{L_n u}^2_{L^2(\Omega)}-\norm{\tilde L_n u}^2_{L^2(\Omega)}}\leq\\
    \sum_{j=1}^{n-1} C\norm{|\rho|\tilde L_j u}^2_{L^2(\Omega)}+\sum_{j=1}^{n-1}C\norm{\tilde L_j u}_{L^2(\Omega)}\norm{|\rho|\tilde L_n u}_{L^2(\Omega)}+C\norm{|\rho|\tilde L_n u}^2_{L^2(\Omega)}.
  \end{multline*}
  By the small constant/large constant inequality, we see that for every $\epsilon>0$ there exists a constant $C_\epsilon>0$ and a neighborhood $U_\epsilon$ of $p$ so that we have
  \[
    \abs{\norm{L_n u}^2_{L^2(\Omega)}-\norm{\tilde L_n u}^2_{L^2(\Omega)}}\leq
    \sum_{j=1}^{n-1}\epsilon\norm{\tilde L_j u}^2_{L^2(\Omega)}+C_\epsilon\norm{|\rho|\tilde L_n u}^2_{L^2(\Omega)}
  \]
  whenever $u\in W^{1,2}(\Omega)$ is supported in $\overline\Omega\cap U_\epsilon$.  Now \eqref{eq:bad_direction_estimate} with $Z=\tilde L_n$ allows us to further refine $C_\epsilon$ and $U_\epsilon$ so that
  \begin{multline*}
    \abs{\norm{L_n u}^2_{L^2(\Omega)}-\norm{\tilde L_n u}^2_{L^2(\Omega)}}\leq\\
    \sum_{j=1}^{n-1}\epsilon\norm{\tilde L_j u}^2_{L^2(\Omega)}+\sum_{j=1}^n\epsilon\norm{\frac{\partial u}{\partial\bar z_j}}^2_{L^2(\Omega)}+C_\epsilon\norm{u}^2_{L^2(\Omega)}.
  \end{multline*}
  If we substitute \eqref{eq:coordinate_difference}, then we may subtract $\sum_{j=1}^{n-1}\epsilon\norm{\tilde L_j u}^2_{L^2(\Omega)}$ and make a final adjustment to $C_\epsilon$ and $U_\epsilon$ so that \eqref{eq:gradient_comparison_1} and \eqref{eq:gradient_comparison_2} follow.

\end{proof}

\begin{lem}
\label{lem:G_bounded_by_gradient}
  Let $\Omega\subset\mathbb{C}^n$ be a domain with $C^{1,1}$ boundary.  Let $p\in\partial\Omega$ and let $U$ be a neighborhood of $p$.  Let $0\leq\eta\leq 1$, and let $\Upsilon\in\mathcal{M}^1_\Omega(U)$.  For $0\leq q\leq n$ and $u,v\in W^{1,2}(\Omega)$ supported in $\overline\Omega\cap U$, define $G_{\Upsilon,\eta}(u,v)$ by \eqref{eq:G_Upsilon}. Let $\{L_j\}_{j=1}^n$ be an orthonormal basis for $T^{1,0}(U)$ with Lipschitz coefficients such that $\{L_j|_{\partial\Omega\cap U}\}_{j=1}^{n-1}$ is an orthonormal basis for $T^{1,0}(\partial\Omega\cap U)$.  Then for every $\epsilon>0$, there exists a neighborhood $U_\epsilon\subset U$ of $p$ and a constant $C_\epsilon>0$ such that
  \begin{equation}
  \label{eq:G_bounded_by_gradient}
    G_{\Upsilon,\eta}(u,u)\leq(1+\epsilon)\left(\sum_{j=1}^n\norm{\frac{\partial}{\partial\bar z_j}u}^2_{L^2(\Omega)}+\sum_{j=1}^{n-1}\norm{L_j u}^2_{L^2(\Omega)}\right)\\
    +C_\epsilon\norm{u}^2_{L^2(\Omega)}
  \end{equation}
  for all $u\in W^{1,2}(\Omega)$ supported in $\overline\Omega\cap U_\epsilon$.
\end{lem}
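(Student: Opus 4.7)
As in Lemma \ref{lem:gradient_transformation}, the exterior algebra plays no role, so we may assume $q=0$ and apply the result coefficient-wise. The two halves of $G_{\Upsilon,\eta}$ in \eqref{eq:G_Upsilon} will be handled separately: the first via the pointwise bound $0\le I-\Upsilon\le I$, the second via the tangential vanishing \eqref{eq:Upsilon_normal_vanishes}--\eqref{eq:Upsilon_mixed_vanishes}. Expanding $\bar Z_{j,\eta}=\partial/\partial\bar z_j-\eta\Upsilon^j$ and $\bar Z^*_{j,\eta}=-\partial/\partial z_j-\eta\overline{\Upsilon^j}$ in \eqref{eq:G_Upsilon} and controlling the cross-terms via Cauchy--Schwarz for the Hermitian positive semi-definite forms $I-\Upsilon$ and $\Upsilon$ (together with the $L^\infty$ bound on $\Upsilon^j$) yields, for every $\delta>0$,
\[
  G_{\Upsilon,\eta}(u,u)\le(1+\delta)\!\int_{\Omega\cap U}\!\left[\sum_j\abs{\tfrac{\partial u}{\partial\bar z_j}}^2+\sum_{j,k=1}^n\Upsilon^{\bar k j}\tfrac{\partial u}{\partial z_j}\overline{\tfrac{\partial u}{\partial z_k}}\right]\!dV+C_\delta\norm{u}^2_{L^2(\Omega)},
\]
so it suffices to estimate the second integrand by the right-hand side of \eqref{eq:G_bounded_by_gradient}.

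After shrinking $U$, Lemma \ref{lem:orthonormal_coordinates} supplies an orthonormal basis $\{\tilde L_j\}_{j=1}^n$ for $T^{1,0}(U)$ with $\tilde L_n=|\partial\rho|^{-1}\sum_p\rho_{\bar p}\partial/\partial z_p$ and $\tilde L_1,\ldots,\tilde L_{n-1}$ tangential on $\partial\Omega\cap U$. Writing $\partial/\partial z_j=\sum_\ell\overline{a^j_\ell}\tilde L_\ell$ with $(a^j_\ell)$ the associated unitary change-of-basis matrix (so $a^j_n=|\partial\rho|^{-1}\rho_{\bar j}$), the second integrand becomes
\[
  \sum_{\ell,m=1}^n\tilde\Upsilon^{\bar m\ell}\,\tilde L_\ell u\,\overline{\tilde L_m u},\qquad\tilde\Upsilon^{\bar m\ell}:=\sum_{j,k=1}^n a^k_m\,\Upsilon^{\bar k j}\,\overline{a^j_\ell}.
\]
The matrix $\tilde\Upsilon$ is unitarily equivalent to $\Upsilon$, hence Hermitian with $0\le\tilde\Upsilon\le I$ pointwise. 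Moreover, \eqref{eq:Upsilon_normal_vanishes} gives $\tilde\Upsilon^{\bar n n}=|\partial\rho|^{-2}\sum_{j,k}\rho_{\bar k}\Upsilon^{\bar k j}\rho_j=O(|\rho|^2)$, and \eqref{eq:Upsilon_mixed_vanishes} gives $|\tilde\Upsilon^{\bar n\ell}|=O(|\rho|)$ for $\ell<n$ (with $\tilde\Upsilon^{\bar\ell n}$ its complex conjugate).

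We then split the $(\ell,m)$-sum into the tangential block ($\ell,m<n$), two off-diagonal blocks with exactly one of $\ell=n$ or $m=n$, and the $(n,n)$ entry. The tangential block is pointwise dominated by $\sum_{\ell<n}\abs{\tilde L_\ell u}^2$ since the principal $(n-1)\times(n-1)$ submatrix of $\tilde\Upsilon$ remains positive semi-definite with operator norm $\le 1$. Each off-diagonal entry contributes at most $O(|\rho|)\abs{\tilde L_\ell u}\abs{\tilde L_n u}\le\mu\abs{\tilde L_\ell u}^2+C_\mu|\rho|^2\abs{\tilde L_n u}^2$ by Cauchy--Schwarz, while the $(n,n)$ entry is $O(|\rho|^2)\abs{\tilde L_n u}^2$. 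Every resulting $\norm{(-\rho)\tilde L_n u}^2_{L^2(\Omega)}$ is absorbed by Lemma \ref{lem:bad_direction_estimate} (with $Z=\tilde L_n$) into $\epsilon\sum_j\norm{\partial u/\partial\bar z_j}^2_{L^2(\Omega)}+C_\epsilon\norm{u}^2_{L^2(\Omega)}$ after further shrinking $U$, and Lemma \ref{lem:gradient_comparison} upgrades $\sum_{\ell<n}\norm{\tilde L_\ell u}^2_{L^2(\Omega)}$ to $(1+\epsilon)\sum_{\ell<n}\norm{L_\ell u}^2_{L^2(\Omega)}$ modulo an error of the same form. Choosing the free parameters $\delta,\mu,\epsilon$ small enough produces \eqref{eq:G_bounded_by_gradient}. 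The main obstacle is the $O(|\rho|)$ bookkeeping for the last row and column of $\tilde\Upsilon$: these estimates are precisely what enables the sharp constant $(1+\epsilon)$, since each $O(|\rho|)$ factor trades the uncontrolled normal derivative $\tilde L_n u$ for a $\rho$-weighted quantity that Lemma \ref{lem:bad_direction_estimate} disposes of at the price of $\epsilon\sum_j\norm{\partial u/\partial\bar z_j}^2_{L^2(\Omega)}$.
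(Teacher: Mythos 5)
Your proposal is correct and follows essentially the same route as the paper's proof: bound the first block of $G_{\Upsilon,\eta}$ by $0\le I-\Upsilon\le I$ plus Cauchy--Schwarz, decompose the second block in the special orthonormal frame $\{\tilde L_j\}$ of Lemma~\ref{lem:orthonormal_coordinates}, exploit \eqref{eq:Upsilon_normal_vanishes}--\eqref{eq:Upsilon_mixed_vanishes} to show the normal row and column of the conjugated matrix are $O(|\rho|)$, absorb the $\rho$-weighted $\tilde L_n u$ term with Lemma~\ref{lem:bad_direction_estimate}, and transfer from $\tilde L_j$ to the given $L_j$ via Lemma~\ref{lem:gradient_comparison}. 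You have merely spelled out more explicitly the unitary conjugation $\tilde\Upsilon^{\bar m\ell}=\sum_{j,k}a^k_m\Upsilon^{\bar k j}\overline{a^j_\ell}$ and the block-by-block Cauchy--Schwarz estimates that the paper leaves to the reader.
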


\begin{proof}
  Since every eigenvalue of $\Upsilon$ is non-negative and bounded by $1$,
  \begin{multline*}
    \sum_{j,k=1}^n\int_{\Omega\cap U} \left(\frac{\partial u}{\partial\bar z_k}-\eta\Upsilon^k u\right)(\delta_{j k}-\Upsilon^{\bar k j})\left(\overline{\frac{\partial u}{\partial\bar z_j}}-\eta\overline{\Upsilon^j u}\right)\, dV\leq\\
    \sum_{j=1}^n\norm{\frac{\partial u}{\partial\bar z_j}-\eta\Upsilon^j u}^2_{L^2(\Omega)},
  \end{multline*}
  so the Cauchy-Schwarz inequality followed by the small constant/large constant inequality imply that for every $\epsilon>0$ there exists $C_\epsilon>0$ such that
  \begin{multline}
  \label{eq:G_first_estimate}
    \sum_{j,k=1}^n\int_{\Omega\cap U} \left(\frac{\partial u}{\partial\bar z_k}-\eta\Upsilon^k u\right)(\delta_{j k}-\Upsilon^{\bar k j})\left(\overline{\frac{\partial u}{\partial\bar z_j}}-\eta\overline{\Upsilon^j u}\right)\, dV\leq\\
    \sum_{j=1}^n(1+\epsilon)\norm{\frac{\partial u}{\partial\bar z_j}}^2_{L^2(\Omega)}+C_\epsilon\norm{u}^2_{L^2(\Omega)}.
  \end{multline}
  Since \eqref{eq:G_first_estimate} gives us \eqref{eq:G_bounded_by_gradient} for the first term in \eqref{eq:G_Upsilon}, we may now turn our attention to the second term.

  Let $\rho\in C^{1,1}(U)\cap C^\infty(U\backslash\partial\Omega)$ be the defining function given by Lemma \ref{lem:rho_derivative_estimate}, and let $\{\tilde L_j\}_{j=1}^n$ be the orthonormal basis given by Lemma \ref{lem:orthonormal_coordinates} (after possibly shrinking $U$).  Since $\left<\frac{\partial}{\partial z_j},\tilde L_n\right>=|\partial\rho|^{-1}\sum_{j=1}^n\frac{\partial\rho}{\partial\bar z_j}$, we have
  \begin{equation}
  \label{eq:orthonormal_decomposition}
    \frac{\partial}{\partial z_j}=\sum_{k=1}^{n-1}\left<\frac{\partial}{\partial z_j},\tilde L_k\right>\tilde L_k+|\partial\rho|^{-1}\sum_{j=1}^n\frac{\partial\rho}{\partial\bar z_j}\tilde L_n\text{ for all }1\leq j\leq n.
  \end{equation}
  Using this decomposition with \eqref{eq:Upsilon_normal_vanishes} and \eqref{eq:Upsilon_mixed_vanishes} together with the fact that every eigenvalue of $\Upsilon$ is bounded above by one, we see that for every $\epsilon>0$ there exists $C_\epsilon>0$ such that
  \begin{multline*}
    \sum_{j,k=1}^n\int_{\Omega\cap U} \left(\frac{\partial u}{\partial z_j}+\eta\overline{\Upsilon^j} u\right)\Upsilon^{\bar k j}\left(\overline{\frac{\partial u}{\partial z_k}}+\eta\Upsilon^k u\right)\, dV\leq\\
    \sum_{j=1}^{n-1}(1+\epsilon)\norm{\tilde L_j u}^2_{L^2(\Omega)}+C_\epsilon\left(\norm{(-\rho)\tilde L_n u}^2_{L^2(\Omega)}+\norm{u}^2_{L^2(\Omega)}\right).
  \end{multline*}
  If we combine this with \eqref{eq:gradient_comparison_1}, \eqref{eq:bad_direction_estimate}, and \eqref{eq:G_first_estimate}, we see that for each $\epsilon>0$ we may further shrink $U_\epsilon$ and choose a larger constant $C_\epsilon>0$ so that \eqref{eq:G_bounded_by_gradient} holds.

\end{proof}

\begin{lem}
\label{lem:gradient_bounded_by_G}
  Let $\Omega\subset\mathbb{C}^n$ be a domain with $C^{1,1}$ boundary.  Let $p\in\partial\Omega$ and let $U$ be a neighborhood of $p$.  Let $0\leq\eta\leq 1$, and for $0<a<b<1$, let $\Upsilon\in\mathcal{M}^1_\Omega(U)$ have $n-1$ eigenvalues (counting multiplicity) in the interval $[a,b]$ almost everywhere on $U$.  For $0\leq q\leq n$ and $u,v\in W^{1,2}(\Omega)$ supported in $\overline\Omega\cap U$, define $G_{\Upsilon,\eta}(u,v)$ by \eqref{eq:G_Upsilon}. Let $\{L_j\}_{j=1}^n$ be an orthonormal basis for $T^{1,0}(U)$ with Lipschitz coefficients such that $\{L_j|_{\partial\Omega\cap U}\}_{j=1}^{n-1}$ is an orthonormal basis for $T^{1,0}(\partial\Omega\cap U)$.  Then for every $\epsilon>0$, there exists a neighborhood $U_\epsilon\subset U$ of $p$ and a constant $C_\epsilon>0$ such that
  \begin{multline}
  \label{eq:gradient_bounded_by_G}
    \sum_{j=1}^n\norm{\frac{\partial}{\partial\bar z_j}u}^2_{L^2(\Omega)}+\sum_{j=1}^{n-1}\norm{L_j u}^2_{L^2(\Omega)}\leq\\
    (1+\epsilon)\max\set{\frac{1}{a},\frac{1}{1-b}}G_{\Upsilon,\eta}(u,u)
    +C_\epsilon\norm{u}^2_{L^2(\Omega)}
  \end{multline}
  for all $u\in W^{1,2}(\Omega)$ supported in $\overline\Omega\cap U_\epsilon$.
\end{lem}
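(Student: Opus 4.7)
The plan is to reduce to the scalar case, pass from the twisted operators $\bar Z_{k,\eta},\bar Z_{j,\eta}^*$ appearing in $G_{\Upsilon,\eta}$ to the plain derivatives $\partial/\partial\bar z_k,-\partial/\partial z_j$, carry out a pointwise eigenvalue analysis of $\Upsilon$ forced by \eqref{eq:Upsilon_normal_vanishes}, and then combine. Since $G_{\Upsilon,\eta}$ and the norms on both sides decouple across coefficients of $u$, I will take $q=0$; by Lemma~\ref{lem:gradient_comparison} it then suffices to prove \eqref{eq:gradient_bounded_by_G} with the generic basis $\{L_j\}_{j<n}$ replaced by the special frame $\{\tilde L_j\}_{j<n}$ of Lemma~\ref{lem:orthonormal_coordinates}, since the discrepancy is absorbable into the advertised $(1+\epsilon)$ factor and $C_\epsilon\|u\|^2$ term. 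Expanding $\bar Z_{k,\eta}=\partial/\partial\bar z_k-\eta\Upsilon^k$ and $\bar Z_{j,\eta}^*=-\partial/\partial z_j-\eta\overline{\Upsilon^j}$ in the two quadratic forms defining $G_{\Upsilon,\eta}$, every cross term involves an $L^\infty$ multiple of $u$ paired against a first-order derivative; a small constant/large constant inequality then yields
\begin{equation*}
G_{\Upsilon,\eta}(u,u)\geq\int_\Omega(\partial u/\partial\bar z)^H(I-\Upsilon)(\partial u/\partial\bar z)\,dV+\int_\Omega(\partial u/\partial z)^H\Upsilon(\partial u/\partial z)\,dV-\epsilon\,\mathcal{D}(u)-C_\epsilon\|u\|^2,
\end{equation*}
where $\mathcal{D}(u)=\sum_{j=1}^n\bigl(\|\partial u/\partial\bar z_j\|^2+\|\partial u/\partial z_j\|^2\bigr)$.

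The heart of the argument is a pointwise eigenvalue analysis. Diagonalize $\Upsilon=\sum_{i=1}^n\mu_ie_ie_i^H$ and denote by $\mu_*$ the eigenvalue not guaranteed to lie in $[a,b]$, with unit eigenvector $e_*$. Condition \eqref{eq:Upsilon_normal_vanishes} says $\mathbf{n}^H\Upsilon\mathbf{n}=O(|\rho|^2)$ for the unit vector $\mathbf{n}$ with components $\mathbf{n}_k=(\partial\rho/\partial z_k)/|\partial\rho|$. Expanding $\mathbf{n}$ in the eigenbasis of $\Upsilon$ and using $\mu_i\geq a$ for $i\neq *$ forces $\sum_{i\neq *}|\langle\mathbf{n},e_i\rangle|^2\leq O(|\rho|^2/a)$; shrinking $U$ to a sufficiently small $U_\epsilon$ then gives $\mu_*\leq O(|\rho|^2)$ and $|\langle\mathbf{n},e_*\rangle|^2\geq 1-O(|\rho|^2/a)$. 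This produces two pointwise a.e.\ bounds on $U_\epsilon$: $I-\Upsilon\geq(1-b)I$ (since every eigenvalue of $\Upsilon$ is then $\leq b$) and $\Upsilon\geq a(I-e_*e_*^H)$, with $e_*$ within $O(|\rho|)$ of $\pm\mathbf{n}$.

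The first of these applied to the first piece of $G_{\Upsilon,\eta}$ gives $(1-b)\sum_j\|\partial u/\partial\bar z_j\|^2$ at once. For the second, since $\langle\partial/\partial z_j,\tilde L_n\rangle=\mathbf{n}_j$, the component of the vector $(\partial u/\partial z_j)_j\in\mathbb{C}^n$ along $\mathbf{n}$ is precisely $\tilde L_n u$, while its orthogonal complement has squared norm $\sum_{\alpha<n}|\tilde L_\alpha u|^2$; replacing $\mathbf{n}\mathbf{n}^H$ by $e_*e_*^H$ produces errors of size $O(|\rho|)\|\tilde L_n u\|\,\|\nabla u\|+O(|\rho|^2)\|\tilde L_n u\|^2$, which are absorbed using Lemma~\ref{lem:bad_direction_estimate} (applied to $Z=\tilde L_n$) and a small constant/large constant inequality. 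Setting $c=\max\{1/a,1/(1-b)\}$, the inequalities $c(1-b)\geq 1$ and $ca\geq 1$ combine the two bounds and, after absorbing the $\epsilon$-errors into the left-hand side (producing the claimed $(1+\epsilon)$ factor), yield \eqref{eq:gradient_bounded_by_G}.

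I expect the main obstacle to be precisely the control of the \emph{bad} direction $\tilde L_n u$, which is absent from the left-hand side of \eqref{eq:gradient_bounded_by_G} but reappears in the error analysis of the second piece via the $O(|\rho|)$ discrepancy between $e_*$ and the true complex normal $\mathbf{n}$; this is exactly the technical role played by Lemma~\ref{lem:bad_direction_estimate}.
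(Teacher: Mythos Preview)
Your strategy is the paper's: bound the $(I-\Upsilon)$-piece by $(1-b)\sum_j\|\partial u/\partial\bar z_j\|^2$, bound the $\Upsilon$-piece by $a\sum_{\alpha<n}\|\tilde L_\alpha u\|^2$ using that the exceptional eigendirection aligns with the complex normal, and clean up with Lemmas~\ref{lem:bad_direction_estimate} and~\ref{lem:gradient_comparison}. The paper invokes \eqref{eq:Upsilon_mixed_vanishes} and \eqref{eq:Upsilon_normal_vanishes} directly in the frame $\{\tilde L_j\}$ rather than spelling out the eigenvector picture, but your argument that $\mu_*\le O(|\rho|^2)$ and $|\langle\mathbf n,e_*\rangle|^2\ge 1-O(|\rho|^2/a)$ is exactly the content of those conditions; in particular it justifies the paper's assertion that ``every eigenvalue of $\Upsilon$ is bounded above by $b$'' on a small enough neighborhood.

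One point needs care. Your displayed lower bound
\[
G_{\Upsilon,\eta}(u,u)\ \geq\ \int(\partial_{\bar z}u)^H(I-\Upsilon)(\partial_{\bar z}u)\,+\,\int(\partial_z u)^H\Upsilon(\partial_z u)\,-\,\epsilon\,\mathcal D(u)\,-\,C_\epsilon\|u\|^2
\]
is too coarse as written: $\mathcal D(u)$ contains $\|\tilde L_n u\|^2$, and once you rearrange this produces an $\epsilon\|\tilde L_n u\|^2$ on the wrong side that nothing absorbs (Lemma~\ref{lem:bad_direction_estimate} only controls $\|\rho\,\tilde L_n u\|^2$). The fix is not to strip off the zeroth-order perturbation $\eta\Upsilon^j u$ with a Euclidean small/large inequality, but to use Cauchy--Schwarz \emph{in the $\Upsilon$-form} on the second piece, so the cross term is bounded by $\epsilon'\int(\partial_z u)^H\Upsilon(\partial_z u)+C_{\epsilon'}\|u\|^2$ and absorbs into the main term; equivalently, keep the twisted $\bar Z^*_{j,\eta}$ intact and let \eqref{eq:Upsilon_mixed_vanishes} suppress the normal component, which is what the paper does. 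The same caution applies to your error ``$O(|\rho|)\|\tilde L_n u\|\,\|\nabla u\|$'': because $w=e_*-\langle e_*,\mathbf n\rangle\mathbf n$ is orthogonal to $\mathbf n$, one actually has $|w^H v|\le O(|\rho|)\bigl(\sum_{\alpha<n}|\tilde L_\alpha u|^2\bigr)^{1/2}$, so the dangerous $O(|\rho|)|\tilde L_n u|^2$ term never appears and a small/large step leaves only $\epsilon'\sum_{\alpha<n}\|\tilde L_\alpha u\|^2+C_{\epsilon'}\|\rho\,\tilde L_n u\|^2$, both absorbable.
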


\begin{proof}
  Since every eigenvalue of $\Upsilon$ is bounded above by $b$,
  \begin{multline*}
    \sum_{j,k=1}^n\int_{\Omega\cap U} \left(\frac{\partial u}{\partial\bar z_k}-\eta\Upsilon^k u\right)(\delta_{j k}-\Upsilon^{\bar k j})\left(\overline{\frac{\partial u}{\partial\bar z_j}}-\eta\overline{\Upsilon^j u}\right)\, dV\geq\\
    \sum_{j=1}^n(1-b)\norm{\frac{\partial u}{\partial\bar z_j}-\eta\Upsilon^j u}^2_{L^2(\Omega)},
  \end{multline*}
  so the Cauchy-Schwarz inequality followed by the small constant/large constant inequality imply that for every $\epsilon>0$ there exists $C_\epsilon>0$ such that
  \begin{multline}
  \label{eq:G_first_estimate_b}
    \sum_{j,k=1}^n\int_{\Omega\cap U} \left(\frac{\partial u}{\partial\bar z_k}-\eta\Upsilon^k u\right)(\delta_{j k}-\Upsilon^{\bar k j})\left(\overline{\frac{\partial u}{\partial\bar z_j}}-\eta\overline{\Upsilon^j u}\right)\, dV\geq\\
    \sum_{j=1}^n(1-b)(1-\epsilon)\norm{\frac{\partial u}{\partial\bar z_j}}^2_{L^2(\Omega)}-C_\epsilon\norm{u}^2_{L^2(\Omega)}.
  \end{multline}
  Since \eqref{eq:G_first_estimate_b} gives us \eqref{eq:gradient_bounded_by_G} for the first term in \eqref{eq:G_Upsilon}, we may now turn our attention to the second term.

  Let $\rho\in C^{1,1}(U)\cap C^\infty(U\backslash\partial\Omega)$ be the defining function given by Lemma \ref{lem:rho_derivative_estimate}, and let $\{\tilde L_j\}_{j=1}^n$ be the orthonormal basis given by Lemma \ref{lem:orthonormal_coordinates} (after possibly shrinking $U$).  On $\partial\Omega\cap U$, $\tilde L_n$ spans the kernel of $\Upsilon$ by \eqref{eq:Upsilon_mixed_vanishes}, so the restriction of $\Upsilon$ to  $\Span\{\tilde L_j\}_{j=1}^{n-1}$ must have eigenvalues bounded below by $a$ almost everywhere on $\partial\Omega\cap U$.  If we use this with \eqref{eq:orthonormal_decomposition}, \eqref{eq:Upsilon_normal_vanishes}, and \eqref{eq:Upsilon_mixed_vanishes}, we see that for every $\epsilon>0$ there exists $C_\epsilon>0$ such that
  \begin{multline*}
    \sum_{j,k=1}^n\int_{\Omega\cap U} \left(\frac{\partial u}{\partial z_j}+\eta\overline{\Upsilon^j} u\right)\Upsilon^{\bar k j}\left(\overline{\frac{\partial u}{\partial z_k}}+\eta\Upsilon^k u\right)\, dV\geq\\
    \sum_{j=1}^{n-1}a(1-\epsilon)\norm{\tilde L_j u}^2_{L^2(\Omega)}-C_\epsilon\left(\norm{(-\rho)\tilde L_n u}^2_{L^2(\Omega)}+\norm{u}^2_{L^2(\Omega)}\right).
  \end{multline*}
  If we combine this with \eqref{eq:gradient_comparison_2}, \eqref{eq:bad_direction_estimate}, and \eqref{eq:G_first_estimate_b}, we see that for each $\epsilon>0$ we may further shrink $U_\epsilon$ and choose a larger constant $C_\epsilon>0$ so that \eqref{eq:G_bounded_by_gradient} holds.

\end{proof}

\section{A Necessary Condition for Maximal Estimates}
\label{sec:necessary_condition}

\begin{proof}[Proof of Theorem \ref{thm:necessary_condition}]
  We follow the proof of Theorem 3.2.1 in \cite{Hor65}.  Fix $\tilde p\in\partial\Omega\cap U$.  After a translation and rotation, we may assume that $\tilde p=0$ and that there exists a neighborhood $\tilde U$ of $\tilde p$ on which
  \[
    \Omega\cap\tilde U=\set{z\in\tilde U:y_n>\varphi(z',x_n)},
  \]
  where $z'=(z_1,\ldots,z_{n-1})$, $z_n=x_n+iy_n$, and $\varphi$ is a $C^{2,\alpha}$ function in some neighborhood of the origin that vanishes to second order at the origin.  It is easy to check that \eqref{eq:necessary_condition} is independent of our choice of defining function, so we let $\rho(z)=\varphi(z',\re z_n)-\im z_n$ on $\tilde U$.  Since $\partial\rho(0)=\frac{i}{2}dz_n$, we have
  \begin{equation}
  \label{eq:rho_normalization}
    |\nabla\rho(0)|=\frac{1}{\sqrt{2}}.
  \end{equation}

  Define
  \begin{equation}
  \label{eq:L_defn_1}
    \mathcal{L}(z'):=\sum_{j,k=1}^{n-1}\frac{\partial^2\varphi}{\partial z_j\partial\bar z_k}(0)z_j\bar z_k.
  \end{equation}
  After a unitary change of coordinates, we may assume that
  \begin{equation}
  \label{eq:L_defn_2}
    \mathcal{L}(z')=\sum_{j=1}^{n-1}\lambda_j|z_j|^2
  \end{equation}
  for some increasing sequence of real numbers $\{\lambda_j\}_{1\leq j\leq n-1}$.  By construction, each $\lambda_j$ represents an eigenvalue of the Levi-form at $\tilde p=0$ with respect to the defining function $\rho$.

  After further shrinking $\tilde U$, we let $\{L_j\}_{j=1}^n$ be the orthonormal coordinates given by Lemma \ref{lem:orthonormal_coordinates}.  Without loss of generality, we may assume that $L_j|_{0}=\frac{\partial}{\partial z_j}$ for all $1\leq j\leq n-1$.  We write $L_j=\sum_{k=1}^n u_j^k(z)\frac{\partial}{\partial z_k}$ for all $1\leq j\leq n$, where $u_j^k\in C^{1}(\tilde U)\cap W^{2,1}(\tilde U)$ for all $1\leq j,k\leq n$ and $(u_j^k(z))_{1\leq j,k\leq n}$ is a unitary matrix for every $z\in\tilde U$.  Define $\Upsilon(z)=(\Upsilon^{\bar k j}(z))_{1\leq j,k\leq n}$ by $\Upsilon^{\bar k j}(z)=\sum_{\{1\leq\ell\leq n-1:\lambda_\ell(0)>0\}}\overline{u_\ell^k(z)}u_\ell^j(z)$ for all $1\leq j,k\leq n$ and $z\in\tilde U$.  Note that $\Upsilon$ is Hermitian and every eigenvalue of $\Upsilon$ must be either $0$ or $1$.  If $\tilde\rho$ is the defining function given by Lemma \ref{lem:rho_derivative_estimate}, then $\sum_{k=1}^n\frac{\partial\tilde\rho}{\partial\bar z_k}\Upsilon^{\bar k j}\equiv 0$ on $\tilde U$, so \eqref{eq:Upsilon_normal_vanishes} must hold for any $C^{2,\alpha}$ defining function $\rho$.  Since $C^1(\tilde U)\cap W^{2,1}(\tilde U)$ is a Banach algebra, $\Upsilon^{\bar k j}\in C^1(\tilde U)\cap W^{2,1}(\tilde U)$.  We immediately obtain $\Upsilon^j\in L^\infty(\tilde U)$ for all $1\leq j\leq n$, so $\Upsilon\in\mathcal{M}^1_\Omega(\tilde U)$.  Since every eigenvalue of $\Upsilon$ is equal to either zero or one, $\Upsilon$ is a projection, which means $\Upsilon^2 = \Upsilon$.  This means that the components of $\Upsilon$ satisfy $\Upsilon^{\bar k j}=\sum_{\ell=1}^n\Upsilon^{\bar k\ell}\Upsilon^{\bar\ell j}$ on $\tilde U$, and hence $\Theta_{\Upsilon,1}\in L^\infty(\tilde U)$ by \eqref{eq:Theta_defined}.  We conclude that $\Upsilon\in\mathcal{M}^2_{\Omega,1}(\tilde U)$.  Observe that $\Upsilon$ is diagonal at the origin and for $1\leq j\leq n-1$ we have
  \begin{equation}
  \label{eq:positive_Upsilon_characterized}
    \Upsilon^{\bar j j}(0)=\begin{cases}1&\lambda_j>0\\0&\lambda_j\leq 0\end{cases}.
  \end{equation}

  Let $u\in C^1_{0,q}(\overline\Omega)\cap\dom\dbar^*$ be supported in $\overline\Omega\cap\tilde U$.  If we apply \eqref{eq:G_bounded_by_gradient} to each component of $u$ and combine this with \eqref{eq:maximal_estimate}, we see that for every $\tilde A>A$ there exists $C_{\tilde A}>0$ such that
  \begin{equation}
  \label{eq:Upsilon_maximal_estimate}
    G_{\Upsilon,1}(u,u)\leq
     \tilde A\left(\norm{\dbar u}^2_{L^2(\Omega)}+\norm{\dbar^* u}^2_{L^2(\Omega)}\right)+C_{\tilde A}\norm{u}^2_{L^2(\Omega)}.
  \end{equation}
  We note that this may require a further shrinking of $\tilde U$.  Using \eqref{eq:gradient_transformation} with $\eta=1$, we have
  \begin{multline*}
    \sum_{j=1}^n\sum_{J\in\mathcal{I}_q}\norm{\frac{\partial}{\partial\bar z_j}u_J}^2_{L^2(\Omega)}+\sum_{j,k=1}^n\int_{\partial\Omega} |\nabla\rho|^{-1}\rho_{j\bar k}\Upsilon^{\bar k j}|u|^2\, d\sigma\\
    \leq G_{\Upsilon,1}(u,u)+O(\norm{u}^2_{L^2(\Omega)}).
  \end{multline*}
  We substitute \eqref{eq:Upsilon_maximal_estimate} to obtain
  \begin{multline*}
    \sum_{j=1}^n\sum_{J\in\mathcal{I}_q}\norm{\frac{\partial}{\partial\bar z_j}u_J}^2_{L^2(\Omega)}+\sum_{j,k=1}^n\int_{\partial\Omega} |\nabla\rho|^{-1}\rho_{j\bar k}\Upsilon^{\bar k j}|u|^2\, d\sigma\\
    \leq \tilde{A}(\norm{\dbar u}^2_{L^2(\Omega)}+\norm{\dbar^* u}^2_{L^2(\Omega)})+\tilde B\norm{u}^2_{L^2(\Omega)}
  \end{multline*}
  for some $\tilde B>0$.  Substituting \eqref{eq:Morrey_Kohn} and rearranging terms gives us
  \begin{multline}
  \label{eq:Upsilon_inequality_u}
    \sum_{j,k=1}^n\int_{\partial\Omega}|\nabla\rho|^{-1}\rho_{j\bar k}\Upsilon^{\bar k j}|u|^2\, d\sigma-\tilde{A}\sum_{j,k=1}^n\sum_{I\in\mathcal{I}_{q-1}}\int_{\partial\Omega}u_{jI}|\nabla\rho|^{-1}\rho_{j\bar k}\overline{u_{kI}}\, d\sigma\\
    \leq (\tilde A-1)\sum_{j=1}^n\sum_{J\in\mathcal{I}_q}\norm{\frac{\partial}{\partial\bar z_j}u_J}^2_{L^2(\Omega)}+\tilde B\norm{u}^2_{L^2(\Omega)}.
  \end{multline}

  Let $\psi_1\in C^\infty_0(\mathbb{C}^{n-1})$ and $\psi_3\in C^\infty_0(\mathbb{R})$ satisfy $\psi_3\equiv 1$ in a neighborhood of $0$ and $\int_{\mathbb{R}}|\psi_3|^2=1$.  For $x+iy\in\C$, if
  we define
  \[
    \psi_2(x+iy)=\psi_3(y)\left(\psi_3(x)+iy\psi_3'(x)\right),
  \]
  then $\psi_2(z)$ is a smooth, compactly supported function on $\mathbb{C}$ satisfying
  \begin{equation}
  \label{eq:psi_2_derivative}
    \frac{\partial}{\partial\bar z}\psi_2(z)\Big|_{\im z=0}=0
  \end{equation}
  and
  \begin{equation}
  \label{eq:psi_2_integral}
    \int_{\mathbb{R}}\big|\psi_2(x)\big|^2dx=1.
  \end{equation}

  Let $f(z)$ be the holomorphic polynomial
  \[
    f(z)=\sum_{j,k=1}^{n-1}\frac{\partial^2\varphi}{\partial z_j\partial z_k}(0)z_j z_k+\sum_{j=1}^{n-1}2\frac{\partial^2\varphi}{\partial z_j\partial x_n}(0)z_j z_n+\frac{1}{2}\frac{\partial^2\varphi}{\partial x_n^2}(0)z_n^2.
  \]
  Then we have
  \begin{equation}
  \label{eq:A_Taylor_Series}
    \abs{\varphi(z',x_n)-\re f(z',x_n)-
    \sum_{j,k=1}^{n-1}\frac{\partial^2\varphi}{\partial z_j\partial\bar z_k}(0)z_j\bar z_k}\leq O(|z'|^{2+\alpha}+|x_n|^{2+\alpha}).
  \end{equation}

  For any $\tau>0$ we define a form in $C^\infty_{(0,q)}(\overline\Omega)\cap\dom\dbar^*$ by
  \[
    u^\tau(z)=\psi_1(\tau z')\psi_2(\tau z_n)e^{\tau^2(f(z)+i z_n)}\bigwedge_{j=1}^q\left(d\bar z_j-\left(\frac{\partial\rho}{\partial z_n}\right)^{-1}\frac{\partial\rho}{\partial z_j}\, d\bar z_n\right).
  \]
  We introduce the change of coordinates $z_j(\tau)=\tau^{-1}w_j$ for $1\leq j\leq n-1$ and $z_n(\tau)=\tau^{-1}\re w_n+i\tau^{-2}\im w_n$.  Using \eqref{eq:A_Taylor_Series}, we have
  \begin{multline}
  \label{eq:defining_function_limit}
    \lim_{\tau\rightarrow\infty}\tau^2(\varphi(z'(\tau),x_n(\tau))-y_n(\tau))=\\
    \re f(w',\re w_n)+\sum_{j,k=1}^{n-1}\frac{\partial^2\varphi}{\partial z_j\partial\bar z_k}(0)w_j\bar w_k-\im w_n,
  \end{multline}
  so as $\tau\rightarrow\infty$ in our special coordinates, we will be working on the domain
  \begin{equation}
  \label{eq:Omega_w}
    \Omega_w=\set{w\in\mathbb{C}^n:\im w_n>\sum_{j,k=1}^{n-1}\frac{\partial^2\varphi}{\partial z_j\partial\bar z_k}(0)w_j\bar w_k\\+\re f(w',\re w_n)}.
  \end{equation}
  We may compute
  \begin{equation}
  \label{eq:pointwise_limit}
    \lim_{\tau\rightarrow\infty}\abs{u^\tau(z(\tau))}^2 =\abs{\psi_1(w')}^2\abs{\psi_2(\re w_n)}^2
    e^{2\re f(w',\re w_n)-2\im w_n},
  \end{equation}
  so
  \begin{equation}
  \label{eq:L2_limit_draft}
    \lim_{\tau\rightarrow\infty}\tau^{2n+1}\norm{u^\tau}^2_{L^2(\Om)}=\int_{\Omega_w}\abs{\psi_1(w')}^2\abs{\psi_2(\re w_n)}^2
    e^{2\re f(w',\re w_n)-2\im w_n}\, dV_w.
  \end{equation}
  We compute $\int_a^\infty e^{-2x}\,dx=\frac{1}{2}e^{-2a}$ for any $a\in\mathbb{R}$.  Since $i\, dz_n\wedge d\bar z_n=2\, dx_n\wedge dy_n$, we have $dV_w=2\, dV_{w'}\wedge dx_n\wedge dy_n$.  With these facts in mind, we may use \eqref{eq:Omega_w} to evaluate the integral in \eqref{eq:L2_limit_draft} with respect to $\im w_n$ (keeping in mind that the integration is over $\Om_w$) and then use \eqref{eq:psi_2_integral} to evaluate with respect to $\re w_n$ and obtain
  \begin{equation}
  \label{eq:L2_limit}
    \lim_{\tau\rightarrow\infty}\tau^{2n+1}\norm{u^\tau}^2_{L^2(\Om)}=
    \int_{\mathbb{C}^{n-1}}\abs{\psi_1(w')}^2
    e^{-2\mathcal{L}(w')}\, dV_{w'}.
  \end{equation}
  Observe that $\, d\sigma|_{0}=\sqrt{2}\, dV_{z'}\wedge dx_n$.  Hence, we may use \eqref{eq:Omega_w} and \eqref{eq:psi_2_integral} to integrate \eqref{eq:pointwise_limit} over the boundary and obtain
  \begin{equation}
  \label{eq:boundary_limit}
    \lim_{\tau\rightarrow\infty}\tau^{2n-1}\int_{\partial\Omega}\abs{u^\tau}^2 \, d\sigma=\sqrt{2}\int_{\mathbb{C}^{n-1}}\abs{\psi_1(w')}^2
    e^{-2\mathcal{L}(w')}\, dV_{w'}.
  \end{equation}
  Similarly, using \eqref{eq:rho_normalization}, we also have
  \begin{multline}
  \label{eq:boundary_limit_hessian}
    \lim_{\tau\rightarrow\infty}\tau^{2n-1}\int_{\partial\Omega}\sum_{j,k=1}^n
    \sum_{K\in\mathcal{I}_{q-1}}|\nabla\rho|^{-1}\frac{\partial^2\rho}{\partial z_j\partial\bar z_k}u^\tau_{jK}\overline{u^\tau_{kK}}\, d\sigma=\\
    2\int_{\mathbb{C}^{n-1}}\abs{\psi_1(w')}^2\sum_{j=1}^q\frac{\partial^2\varphi}{\partial z_j\partial\bar z_j}(0)
    e^{-2\mathcal{L}(w')}dw'.
  \end{multline}
  Substituting \eqref{eq:L_defn_1} and \eqref{eq:L_defn_2} in \eqref{eq:boundary_limit_hessian}, we obtain
  \begin{multline}
  \label{eq:combined_limit}
    \lim_{\tau\rightarrow\infty}\tau^{2n-1}\int_{\partial\Omega}\sum_{j,k=1}^n
    \sum_{K\in\mathcal{I}_{q-1}}|\nabla\rho|^{-1}\frac{\partial^2\rho}{\partial z_j\partial\bar z_k}u^\tau_{jK}\overline{u^\tau_{kK}}\, d\sigma=\\
    2\int_{\mathbb{C}^{n-1}}\abs{\psi_1(w')}^2\sum_{j=1}^q\lambda_j
    e^{-2\mathcal{L}(w')}\, dV_{w'}.
  \end{multline}

  For $1\leq k\leq n-1$, we compute
  \begin{multline*}
    \frac{\partial u^\tau}{\partial\bar z_k}(z)=
    \tau\frac{\partial \psi_1}{\partial\bar z_k}(\tau z')\psi_2(\tau z_n)e^{\tau^2(f(z)+i z_n)}\bigwedge_{j=1}^q\left(d\bar z_j-\left(\frac{\partial\rho}{\partial z_n}\right)^{-1}\frac{\partial\rho}{\partial z_j}\, d\bar z_n\right)\\
    +\psi_1(\tau z')\psi_2(\tau z_n)e^{\tau^2(f(z)+i z_n)}\frac{\partial}{\partial\bar z_k}\bigwedge_{j=1}^q\left(d\bar z_j-\left(\frac{\partial\rho}{\partial z_n}\right)^{-1}\frac{\partial\rho}{\partial z_j}\, d\bar z_n\right).
  \end{multline*}
  Furthermore,
  \begin{multline}
  \label{eq:f_tau_derivative_n}
    \frac{\partial u^\tau}{\partial\bar z_n}(z)=
    \tau\psi_1(\tau z')\frac{\partial\psi_2}{\partial\bar z_n}(\tau z_n)e^{\tau^2(f(z)+i z_n)}\bigwedge_{j=1}^q\left(d\bar z_j-\left(\frac{\partial\rho}{\partial z_n}\right)^{-1}\frac{\partial\rho}{\partial z_j}\, d\bar z_n\right)\\
    +\psi_1(\tau z')\psi_2(\tau z_n)e^{\tau^2(f(z)+i z_n)}\frac{\partial}{\partial\bar z_n}\bigwedge_{j=1}^q\left(d\bar z_j-\left(\frac{\partial\rho}{\partial z_n}\right)^{-1}\frac{\partial\rho}{\partial z_j}\, d\bar z_n\right).
  \end{multline}
  Hence, using \eqref{eq:psi_2_derivative} and observing that the second term in each derivative is uniformly bounded in $\tau$, we have
  \begin{multline}
  \label{eq:pointwise_gradient_limit}
    \lim_{\tau\rightarrow\infty}\tau^{-2}\sum_{j=1}^n\abs{\frac{\partial}{\partial\bar z_j}u^\tau(z)\Big|_{z=z(\tau)}}^2 \\ =\sum_{j=1}^{n-1}\abs{\frac{\partial}{\partial\bar w_j}\psi_1(w')}^2\abs{\psi_2(\re w_n)}^2
    e^{2\re f(w',\re w_n)-2\im w_n}.
  \end{multline}
  Integrating \eqref{eq:pointwise_gradient_limit} as before, we obtain
  \begin{equation}
  \label{eq:gradient_limit}
    \lim_{\tau\rightarrow\infty}\tau^{2n-1}\sum_{j=1}^n\norm{\frac{\partial}{\partial\bar z_j}u^\tau}^2_{L^2(\Om)}=
    \int_{\mathbb{C}^{n-1}}\sum_{j=1}^{n-1}\abs{\frac{\partial}{\partial\bar w_j}\psi_1(w')}^2
    e^{-2\mathcal{L}(w')}\, dV_{w'}.
  \end{equation}

  If we substitute $u^\tau$ in \eqref{eq:Upsilon_inequality_u}, multiply by $\tau^{2n-1}$, and take the limit as $\tau\rightarrow\infty$, we may use \eqref{eq:positive_Upsilon_characterized}, \eqref{eq:boundary_limit}, \eqref{eq:L_defn_1}, \eqref{eq:L_defn_2}, \eqref{eq:rho_normalization}, \eqref{eq:combined_limit}, and \eqref{eq:gradient_limit} to obtain
  \begin{multline}
  \label{eq:Upsilon_inequality_psi}
    2\int_{\mathbb{C}^{n-1}}\left(\sum_{\{1\leq j\leq n-1:\lambda_j>0\}} \lambda_j-\tilde A\sum_{j=1}^q\lambda_j
    \right)|\psi_1(w')|^2e^{-2\mathcal{L}(w')}\, dV_{w'}\\
    \leq (\tilde A-1)\int_{\mathbb{C}^{n-1}}\sum_{j=1}^{n-1}\abs{\frac{\partial}{\partial\bar w_j}\psi_1(w')}^2
    e^{-2\mathcal{L}(w')}\, dV_{w'}.
  \end{multline}
  Since this holds for all $\psi_1\in C^\infty_0(\mathbb{C}^{n-1})$, we may use Lemma 3.2.2 in \cite{Hor65} to show
  \[
    \sum_{\{1\leq j\leq n-1:\lambda_j<0\}}-\lambda_j\geq\frac{1}{\tilde A-1}\left(\sum_{\{1\leq j\leq n-1:\lambda_j>0\}} \lambda_j-\tilde A\sum_{j=1}^q\lambda_j
    \right).
  \]
  Multiplying by $\frac{\tilde A-1}{\tilde A}$ and rearranging terms, this is equivalent to
  \[
    \sum_{j=1}^q\lambda_j+\sum_{\{1\leq j\leq n-1:\lambda_j<0\}}|\lambda_j|\geq\frac{1}{\tilde A}\sum_{j=1}^{n-1} |\lambda_j|.
  \]
  Since this holds for every $\tilde A>A$, \eqref{eq:necessary_condition} follows.
\end{proof}

\begin{cor}
\label{cor:Z_q}
  Let $\Omega\subset\mathbb{C}^n$ be a domain with $C^{2,\alpha}$ boundary, $\alpha>0$, and let $1\leq q\leq n-1$.  Assume that $\Omega$ admits maximal estimates on $(0,q)$-forms near $p\in\partial\Omega$.  For any $\tilde p\in \partial\Omega$ sufficiently close to $p$, if any eigenvalue of the Levi-form is non-vanishing at $\tilde p$, then $\partial\Omega$ satisfies $Z(q)$ at $\tilde p$.
\end{cor}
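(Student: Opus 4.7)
The plan is to deduce $Z(q)$ at $\tilde p$ by contradiction, using Theorem \ref{thm:necessary_condition} as the single input. Since maximal estimates hold near $p$, there is a neighborhood $U$ of $p$ on which \eqref{eq:necessary_condition} is available, and any $\tilde p\in\partial\Omega$ sufficiently close to $p$ lies in $U$, so the inequality
\[
  \sum_{j=1}^q\lambda_j+\sum_{\{1\leq j\leq n-1:\lambda_j<0\}}|\lambda_j|\geq\frac{1}{A}\sum_{j=1}^{n-1}|\lambda_j|
\]
holds at $\tilde p$ with the eigenvalues $\lambda_1\leq\cdots\leq\lambda_{n-1}$ of the Levi-form at $\tilde p$.

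Suppose, for contradiction, that $Z(q)$ fails at $\tilde p$. By definition of $Z(q)$, this means the Levi-form has at most $q$ negative eigenvalues and at most $n-q-1$ positive eigenvalues. Because the $\lambda_j$ are arranged in non-decreasing order, these two conditions translate respectively to $\lambda_{q+1},\ldots,\lambda_{n-1}\geq 0$ and $\lambda_1,\ldots,\lambda_q\leq 0$. (The total count $n-1$ together with the two bounds leaves no room for extra positive or negative eigenvalues outside these blocks.)

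Under this sign pattern, $\sum_{j=1}^q\lambda_j=-\sum_{j=1}^q|\lambda_j|$, and every strictly negative $\lambda_j$ occurs among $j=1,\ldots,q$, so
\[
  \sum_{\{1\leq j\leq n-1:\lambda_j<0\}}|\lambda_j|=\sum_{j=1}^q|\lambda_j|,
\]
the zero $\lambda_j$'s contributing nothing. Hence the left-hand side of \eqref{eq:necessary_condition} vanishes at $\tilde p$, and Theorem \ref{thm:necessary_condition} forces
\[
  0\geq\frac{1}{A}\sum_{j=1}^{n-1}|\lambda_j|,
\]
i.e., every eigenvalue of the Levi-form at $\tilde p$ must be zero. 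This contradicts the hypothesis that some eigenvalue is non-vanishing at $\tilde p$, and the corollary follows.

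There is no real obstacle here: once one correctly unpacks the failure of $Z(q)$ into the eigenvalue sign pattern, the necessary condition collapses on the left-hand side and forces all eigenvalues to vanish. The only subtlety is bookkeeping the inequalities $\lambda_j\leq 0$ for $j\leq q$ and $\lambda_j\geq 0$ for $j\geq q+1$ from the negated $Z(q)$ condition, which is straightforward given the non-decreasing ordering.
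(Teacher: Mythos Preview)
Your proof is correct and takes essentially the same approach as the paper: both feed the negation of $Z(q)$ into \eqref{eq:necessary_condition} and read off a contradiction with the nonvanishing eigenvalue hypothesis. The only cosmetic difference is that the paper assumes just one half of the negation (at most $n-q-1$ positive eigenvalues) and deduces the other half (at least $q+1$ negative eigenvalues) directly, whereas you assume the full negation and drive the left-hand side to zero; these are logically equivalent reorganizations of the same computation.
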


\begin{proof}
  Suppose that the Levi-form at $\tilde p$ has at most $n-q-1$ positive eigenvalues.  Then \eqref{eq:necessary_condition} implies
  \[
    \sum_{\{q+1\leq j\leq n-1:\lambda_j<0\}}|\lambda_j|\geq\frac{1}{A}\sum_{j=1}^{n-1}|\lambda_j|.
  \]
  The right-hand side is strictly positive by hypothesis, and hence the left-hand side must also be positive.  This means that $\lambda_j<0$ for at least one $q+1\leq j\leq n-1$, and so the Levi-form has at least $q+1$ negative eigenvalues.  Hence, the Levi-form at $\tilde p$ has at least $n-q$ positive eigenvalues or at least $q+1$ negative eigenvalues.

\end{proof}

\section{A Sufficient Condition for Maximal Estimates}

\begin{proof}[Proof of Proposition \ref{prop:sufficient_condition}]
Fix a neighborhood $V$ of $p$ that is relatively compact in $U$, and choose $0<\epsilon<\frac{1}{2}-\frac{1}{A}$ such at least $n-1$ eigenvalues
 of $\Upsilon$ lie in
  \[
    \left[\frac{1}{A(1-2\epsilon)},1-\frac{1}{A(1-2\epsilon)}\right]
  \]
  on $\overline V$.  Let $u\in C^1_{0,q}(\overline\Omega)\cap\dom\dbar^*$ be supported in $\overline\Omega\cap V$.  We begin with the Morrey-Kohn identity \eqref{eq:Morrey_Kohn}.  Using Lemma \ref{lem:gradient_transformation}, we may expand $\bar Z_{j,\eta-1}u_J$ in \eqref{eq:gradient_transformation} and combine this with \eqref{eq:Morrey_Kohn} to obtain
  \begin{multline*}
    \norm{\dbar u}^2_{L^2(\Omega)}+\norm{\dbar^* u}^2_{L^2(\Omega)}+\int_\Omega\Theta_{\Upsilon,\eta}|u|^2\, dV
    +\sum_{j=1}^n\sum_{J\in\mathcal{I}_q}\norm{(1-\eta)\Upsilon^j u_J}^2_{L^2(\Omega)}=\\
    G_{\Upsilon,\eta}(u,u)-
    2\Rre\sum_{j=1}^n\sum_{J\in\mathcal{I}_q}\int_\Omega(1-\eta)\overline{\Upsilon^j}\frac{\partial u_J}{\partial\bar z_j}\overline{u_J}\, dV\\
    +\sum_{j,k=1}^n\sum_{I\in\mathcal{I}_{q-1}}\int_{\partial\Omega}u_{jI}|\nabla\rho|^{-1}\rho_{j\bar k}\overline{u_{kI}}\, d\sigma-\sum_{j,k=1}^n\int_{\partial\Omega}|\nabla\rho|^{-1}\rho_{j\bar k}\Upsilon^{\bar k j}|u|^2\, d\sigma.
  \end{multline*}
  Since at least $n-1$ of the eigenvalues of $\Upsilon$ lie in $\left[\frac{1}{A(1-2\epsilon)},1-\frac{1}{A(1-2\epsilon)}\right]$ on $\overline V$, we apply \eqref{eq:gradient_bounded_by_G} to each component of $u$ to see that there exist a constant $C_\epsilon>0$ and a neighborhood $V_\epsilon\subset V$ of $p$ such that
  \begin{multline*}
    (1-\epsilon)A\left(\norm{\dbar u}^2_{L^2(\Omega)}+\norm{\dbar^* u}^2_{L^2(\Omega)}\right)+C_\epsilon\norm{u}^2_{L^2(\Omega)}\geq\\
    -2\Rre\sum_{j=1}^n\sum_{J\in\mathcal{I}_q}\int_\Omega(1-\eta)\overline{\Upsilon^j}\frac{\partial u_J}{\partial\bar z_j}\overline{u_J}\, dV
    +\sum_{j=1}^n\sum_{J\in\mathcal{I}_q}\norm{\frac{\partial u_J}{\partial\bar z_j}}^2_{L^2(\Omega)}+\sum_{j=1}^{n-1}\sum_{J\in\mathcal{I}_q}\norm{L_j u_J}^2_{L^2(\Omega)}\\
    +\sum_{j,k=1}^n\sum_{I\in\mathcal{I}_{q-1}}A\int_{\partial\Omega}u_{jI}|\nabla\rho|^{-1}\rho_{j\bar k}\overline{u_{kI}}\, d\sigma-\sum_{j,k=1}^n A\int_{\partial\Omega}|\nabla\rho|^{-1}\rho_{j\bar k}\Upsilon^{\bar k j}|u|^2\, d\sigma
  \end{multline*}
  whenever $u\in C^1_{0,q}(\overline\Omega)\cap\dom\dbar^*$ is supported in $\overline\Omega\cap V_\epsilon$.  Using, e.g., Lemma 4.7 in \cite{Str10}, we see that \eqref{eq:weak_z_q} implies that the boundary term is non-negative.  A final application of the Cauchy-Schwarz inequality and the small constant/large constant inequality will imply \eqref{eq:maximal_estimate}.
\end{proof}

In the spirit of \cite{HaRa15}, we have the following obvious corollary:
\begin{cor}
\label{cor:sufficient_condition_C_2}
  Let $\Omega\subset\mathbb{C}^n$ be a domain with $C^3$ boundary and let $1\leq q\leq n-1$. Let $\rho$ be a $C^3$ defining function for $\Omega$. Assume that for some $p\in\partial\Omega$ there exists a neighborhood $U$ of $p$, a constant $A>2$, and a positive semi-definite Hermitian $n\times n$ matrix $\tilde\Upsilon$ with entries in $C^{1,1}(\partial\Omega\cap U)$ such that:
  \begin{enumerate}
    \item \eqref{eq:Upsilon_tangential} holds on $U\cap\partial\Omega$ for every $1\leq k\leq n$,
    \item counting multiplicity, $\tilde\Upsilon$ has $n-1$ eigenvalues in $(1/A,1-1/A)$ on $\partial\Omega\cap U$,
    \item if $\{\lambda_1,\ldots,\lambda_{n-1}\}$ denote the eigenvalues of the Levi-form (computed with $\rho$) arranged in increasing order, then \eqref{eq:weak_z_q} holds on $U\cap\partial\Omega$.
  \end{enumerate}
  Then $\Omega$ admits a maximal estimate on $(0,q)$-forms near $p$.
\end{cor}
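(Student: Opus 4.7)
The plan is to construct a matrix $\Upsilon$ on a neighborhood of $p$ that extends $\tilde\Upsilon$ from $\partial\Omega\cap U$ into $U$ in a controlled way so that $\Upsilon\in\mathcal{M}^2_{\Omega,\eta}(V)$ for some smaller neighborhood $V$ of $p$ and some (in fact any) $\eta\in[0,1]$, and then to invoke Proposition \ref{prop:sufficient_condition}. First, because $\rho\in C^3(U)$, the coefficients of the unit normal $(1,0)$-vector $L_n=|\partial\rho|^{-1}\sum_{j}(\partial\rho/\partial\bar z_j)\,\partial/\partial z_j$ are $C^2$ on $U$ (after shrinking $U$ so that $\partial\rho\neq 0$). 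Correspondingly, the Hermitian matrix
\[
P^{\bar kj}=\delta_{jk}-|\partial\rho|^{-2}\frac{\partial\rho}{\partial\bar z_k}\frac{\partial\rho}{\partial z_j},
\]
which is pointwise orthogonal projection on $T^{1,0}$ onto the orthocomplement of $L_n$, is of class $C^2(U)$.

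Next, use Whitney's extension theorem to extend $\tilde\Upsilon$ entrywise from $\partial\Omega\cap U$ to a Hermitian matrix $\bar\Upsilon$ with $C^{1,1}$ entries on a neighborhood of $p$, and set $\Upsilon=P\bar\Upsilon P$ on that neighborhood. Then $\Upsilon$ is Hermitian and of class $C^{1,1}$ as a product of a $C^2$ and a $C^{1,1}$ matrix. Because $PL_n\equiv 0$, we have $\sum_k(\partial\rho/\partial\bar z_k)\Upsilon^{\bar kj}\equiv 0$ identically, which makes \eqref{eq:Upsilon_normal_vanishes} automatic. The $C^{1,1}$ regularity places every entry of $\Upsilon$ in $L^\infty\cap W^{2,\infty}_{\mathrm{loc}}\subset L^\infty\cap W^{1,2}\cap W^{2,1}$, makes each $\Upsilon^j$ bounded, and renders $\Theta_{\Upsilon,\eta}\in L^\infty$ for any $\eta\in[0,1]$ since all the ingredients in \eqref{eq:Theta_defined} are essentially bounded. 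On the boundary, hypothesis (1) of the corollary gives $\tilde\Upsilon L_n=0$, hence $P\tilde\Upsilon P=\tilde\Upsilon$, so $\Upsilon|_{\partial\Omega\cap U}=\tilde\Upsilon|_{\partial\Omega\cap U}$.

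For the eigenvalue and positivity hypotheses of Proposition \ref{prop:sufficient_condition}, note that on $\partial\Omega\cap U$ the matrix $\Upsilon=\tilde\Upsilon$ has one zero eigenvalue in the normal direction (forced by \eqref{eq:Upsilon_tangential}) and, by hypothesis (2) of the corollary, $n-1$ eigenvalues in the open interval $(1/A,1-1/A)$; in particular $\tilde\Upsilon$ and $I-\tilde\Upsilon$ are both positive semi-definite on $\partial\Omega\cap U$. These are open conditions, so choosing $A'>A$ slightly larger and shrinking the neighborhood to a suitable $V$, continuity of eigenvalues under the $C^0$ extension ensures that on $V$ the matrix $\Upsilon$ has $n-1$ eigenvalues in $(1/A',1-1/A')$ and that $\Upsilon$ and $I-\Upsilon$ are positive semi-definite almost everywhere. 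Hypothesis (3) of the corollary combined with $\Upsilon|_{\partial\Omega}=\tilde\Upsilon|_{\partial\Omega}$ supplies \eqref{eq:weak_z_q} for $\Upsilon$ on $V\cap\partial\Omega$. Proposition \ref{prop:sufficient_condition} applied with this $\Upsilon$, $A'$, $\eta=0$, and the neighborhood $V$ then yields the desired maximal estimate on $(0,q)$-forms near $p$.

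The main technical delicacy is the boundary-vanishing condition \eqref{eq:Upsilon_normal_vanishes}: a generic $C^{1,1}$ extension of $\tilde\Upsilon$ would satisfy it only as $O(|\rho|)$ rather than $O(|\rho|^2)$. The projection trick $\Upsilon=P\bar\Upsilon P$ forces the exact identity $\sum_k(\partial\rho/\partial\bar z_k)\Upsilon^{\bar kj}\equiv 0$ in all of $V$, trivializing \eqref{eq:Upsilon_normal_vanishes} and bypassing the higher-order tangential computations used in the proof of Lemma \ref{lem:gradient_transformation}. The remaining steps (the Whitney extension and the perturbation of eigenvalues from $A$ to $A'$) are standard.
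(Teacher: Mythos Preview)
Your proof is correct and follows essentially the same strategy as the paper: extend $\tilde\Upsilon$ off $\partial\Omega$ so as to land in $\mathcal{M}^2_{\Omega,\eta}$, verify the eigenvalue and \eqref{eq:weak_z_q} hypotheses persist on a small neighborhood, and invoke Proposition~\ref{prop:sufficient_condition}. The only real difference is in the extension mechanism: the paper works in the orthonormal frame of Lemma~\ref{lem:orthonormal_coordinates} and appeals to the explicit construction of \cite{HaRa15,HaRa19} to build a positive semi-definite extension, whereas you take a generic $C^{1,1}$ Whitney extension $\bar\Upsilon$ and then force tangentiality via the sandwich $\Upsilon=P\bar\Upsilon P$. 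Your projection trick is a clean, self-contained way to secure \eqref{eq:Upsilon_normal_vanishes} exactly (not just to order $O(|\rho|^2)$), and it makes the positive semi-definiteness of $\Upsilon$ and $I-\Upsilon$ follow transparently from the eigenvalue bounds rather than from the particular extension used. The paper's route, on the other hand, ties in directly with the weak $Z(q)$ framework of their earlier work. Either way the argument reduces immediately to Proposition~\ref{prop:sufficient_condition}.
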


\begin{proof}
  Let $\{L_j\}_{j=1}^n$ be the orthonormal coordinates given by Lemma \ref{lem:orthonormal_coordinates}.  If we express the coefficients of $\tilde\Upsilon$ with respect to these coordinates, it is not difficult to extend $\tilde\Upsilon$ to $\Upsilon$ with coefficients in $C^2(U)$ in such a way that $\Upsilon$ is positive semi-definite (for example, see the construction used in Lemma 2.11 in \cite{HaRa15} or Lemma 3.1 in \cite{HaRa19}).  Since $\Upsilon$ has entries in $C^2$, it is trivial to note that $\Upsilon^j\in L^\infty(U)$ for all $1\leq j\leq n$ and $\Theta_{\Upsilon,0}\in L^\infty(U)$.  We may clearly choose $U$ sufficiently small so that $n-1$ eigenvalues of $\Upsilon$ lie in $(1/A,1-1/A)$ on $U$, so the conditions of Proposition \ref{prop:sufficient_condition} are satisfied.
\end{proof}

We now have enough information to completely characterize the case in which the Levi-form is non-vanishing at a point.
\begin{proof}[Proof of Theorem \ref{thm:Z_q}]
  Suppose that $\Omega$ satisfies $Z(q)$ at $p$.  Let $\rho$ be a $C^3$ defining function for $\Omega$ on some neighborhood $U$ of $p$.  After a rotation and a translation, we may assume that $p=0$, $\frac{\partial\rho}{\partial z_j}(0)=0$ for all $1\leq j\leq n-1$, $\frac{\partial\rho}{\partial z_n}(0)=-\frac{i}{2}|\partial\rho(0)|$, $\frac{\partial^2\rho}{\partial z_j\partial\bar z_k}(0)=0$ whenever $1\leq j,k\leq n-1$ and $j\neq k$, and $\frac{\partial^2\rho}{\partial z_j\partial\bar z_j}(0)\leq\frac{\partial^2\rho}{\partial z_k\partial\bar z_k}(0)$ whenever $1\leq j\leq k\leq n-1$.  We assume that $U$ is sufficiently small so that $\frac{\partial\rho}{\partial z_n}\neq 0$ on $U$.

  Let $m$ denote the number of negative eigenvalues of the Levi-form at $p$, so that $\frac{\partial^2\rho}{\partial z_j\partial\bar z_j}(0)<0$ if and only if $1\leq j\leq m$.  Let $0<a<b<1$.  For $z\in U$, set $\Upsilon^{\bar j j}(z)=b$ whenever $1\leq j\leq m$, $\Upsilon^{\bar j j}(z)=a$ whenever $m+1\leq j\leq n-1$, $\Upsilon^{\bar k j}(z)=0$ whenever $1\leq j,k\leq n-1$ and $j\neq k$, $\Upsilon^{\bar n j}(z)=-b\left(\frac{\partial\rho}{\partial\bar z_n}(z)\right)^{-1}\frac{\partial\rho}{\partial\bar z_j}(z)$ for all $1\leq j\leq m$, $\Upsilon^{\bar k n}(z)=-b\left(\frac{\partial\rho}{\partial z_n}(z)\right)^{-1}\frac{\partial\rho}{\partial z_k}(z)$ for all $1\leq k\leq m$, $\Upsilon^{\bar n j}(z)=-a\left(\frac{\partial\rho}{\partial\bar z_n}(z)\right)^{-1}\frac{\partial\rho}{\partial\bar z_j}(z)$ for all $m+1\leq j\leq n-1$, $\Upsilon^{\bar k n}(z)=-a\left(\frac{\partial\rho}{\partial z_n}(z)\right)^{-1}\frac{\partial\rho}{\partial z_k}(z)$ for all $m+1\leq k\leq n-1$, and $\Upsilon^{\bar n n}(z)=\abs{\frac{\partial\rho}{\partial z_n}(z)}^{-2}\left(\sum_{j=1}^mb\abs{\frac{\partial\rho}{\partial z_j}(z)}^{2}+\sum_{j=m+1}^{n-1}a\abs{\frac{\partial\rho}{\partial z_j}(z)}^{2}\right)$.  By construction, $\Upsilon$ satisfies \eqref{eq:Upsilon_tangential}.  At $p$, $\Upsilon$ has $m$ eigenvalues equal to $b$ and $n-1-m$ eigenvalues equal to $a$, so if we choose $A>\max\{1/a,1/(1-b)\}$, then we may shrink $U$ so that $\Upsilon$ has $n-1$ eigenvalues in the interval $(1/A,1-1/A)$ on $U$.

  Let $\{\lambda_j\}_{j=1}^{n-1}$ denote the eigenvalues of the Levi-form with respect to $\rho$ arranged in non-decreasing order.  By construction, $\lambda_j(0)=\frac{\partial^2\rho}{\partial z_j\partial\bar z_j}(0)$ for all $1\leq j\leq n-1$.  Hence,
  \[
    \sum_{j=1}^q\lambda_j(0)-\sum_{j,k=1}^n\Upsilon^{\bar k j}(0)\frac{\partial^2\rho}{\partial z_j\partial\bar z_k}(0)=\sum_{j=1}^q\lambda_j(0)-\sum_{j=1}^m b\lambda_j(0)-\sum_{j=m+1}^{n-1}a\lambda_j(0).
  \]
  If the Levi-form has at least $q+1$ negative eigenvalues, then $m\geq q+1$, so
  \[
    \sum_{j=1}^q\lambda_j(0)-\sum_{j,k=1}^n\Upsilon^{\bar k j}(0)\frac{\partial^2\rho}{\partial z_j\partial\bar z_k}(0)=\sum_{j=1}^m(1-b)\lambda_j(0)-\sum_{j=q+1}^m \lambda_j(0)-\sum_{j=m+1}^{n-1}a\lambda_j(0).
  \]
  Since $-\sum_{j=q+1}^m \lambda_j(0)>0$, we may choose $b$ sufficiently close to $1$ and $a$ sufficiently close to $0$ so that \eqref{eq:weak_z_q} holds in a neighborhood of $p$.  If the Levi-form has at least $n-q$ positive eigenvalues, then $m\leq q-1$, so
  \[
    \sum_{j=1}^q\lambda_j(0)-\sum_{j,k=1}^n\Upsilon^{\bar k j}(0)\frac{\partial^2\rho}{\partial z_j\partial\bar z_k}(0)=\sum_{j=m+1}^q\lambda_j(0)+\sum_{j=1}^m(1-b)\lambda_j(0)-\sum_{j=m+1}^{n-1}a\lambda_j(0).
  \]
  Since $\lambda_q(0)>0$, we have $\sum_{j=m+1}^q\lambda_j(0)>0$, and hence we may once again choose $b$ sufficiently close to $1$ and $a$ sufficiently close to $0$ so that \eqref{eq:weak_z_q} holds in a neighborhood of $p$.

  We have shown that $\Upsilon$ satisfies the hypotheses of Corollary \ref{cor:sufficient_condition_C_2}, and hence maximal estimates hold for $(0,q)$-forms in a neighborhood of $p$.  The converse follows from Corollary \ref{cor:Z_q}.
\end{proof}

In the language of the present paper, \cite{Derr78} and \cite{Ben00} carry out integration by parts using $\Upsilon$ with an eigenvalue in $(0,1)$ with multiplicity $n-1$.  If we apply this method to a domain which is not necessarily pseudoconvex, we obtain Theorem \ref{thm:almost_pseudoconvex}, as we will see:
\begin{proof}[Proof of Theorem \ref{thm:almost_pseudoconvex}]
  Since \eqref{eq:almost_pseudoconvex} and \eqref{eq:almost_pseudoconcave} are independent of the choice of defining function, we let $\rho$ be the defining function for $\Omega$ given by Lemma \ref{lem:rho_derivative_estimate}.

  We first assume that \eqref{eq:almost_pseudoconvex} holds.  Note that this necessarily implies that $\lambda_q\geq 0$ on $\partial\Omega\cap U$.  Assume that $U$ is sufficiently small so that \eqref{eq:necessary_condition} holds on $U$ for some $A>2$.  Fix $0<t<\frac{\epsilon}{(1+\epsilon)A-2}$.  Observe that this implies $0<t<\frac{1}{A}<\frac{1}{2}$.  For $1\leq j,k\leq n$, set
  \begin{equation}
  \label{eq:Upsilon_identity}
    \Upsilon^{\bar k j}(z)=t\left(\delta_{jk}-|\partial\rho(z)|^{-2}\frac{\partial\rho}{\partial z_k}(z)\frac{\partial\rho}{\partial\bar z_j}(z)\right)
  \end{equation}
  for all $z\in U$.  We immediately obtain \eqref{eq:Upsilon_normal_vanishes}.  As in the proof of Lemma \ref{lem:orthonormal_coordinates}, we may show that $\Upsilon^{\bar k j}\in C^1(U)\cap W^{2,1}(U)$ for all $1\leq j,k\leq n$.  Hence $\Upsilon^j\in L^\infty(U)$ for all $1\leq j\leq n$.  For $1\leq j,k\leq n$, we also have $\Upsilon^{\bar k j}=t^{-1}\sum_{\ell=1}^n \Upsilon^{\bar k\ell}\Upsilon^{\bar\ell j}$, so \eqref{eq:Theta_defined} can be used to show that $\Theta_{\Upsilon,t^{-1}}\in L^\infty(U)$.  Observe that $\Upsilon$ has an eigenvalue of $t$ with multiplicity $n-1$ and an eigenvalue of zero with multiplicity one.  We conclude that $\Upsilon\in\mathcal{M}^2_{\Omega,t^{-1}}(U)$.

  For $z\in\partial\Omega\cap U$, we set
  \begin{align*}
    f(z)&=\sum_{\{1\leq j\leq q:\lambda_j<0\}}(-\lambda_j(z)),\\
    g(z)&=\sum_{\{1\leq j\leq q:\lambda_j>0\}}\lambda_j(z)\text{, and}\\
    h(z)&=\sum_{j=q+1}^{n-1}\lambda_j(z).
  \end{align*}
  Each of these functions is a sum of non-negative terms, so each function is non-negative valued.  With this notation, \eqref{eq:necessary_condition} can be written $g(z)\geq\frac{1}{A}(f(z)+g(z)+h(z))$ and \eqref{eq:almost_pseudoconvex} can be written $-f(z)+g(z)\geq\epsilon f(z)$.  For $z\in\partial\Omega\cap U$, we compute
  \[
    \sum_{j=1}^q\lambda_j(z)-\sum_{j,k=1}^n\Upsilon^{\bar k j}(z)\frac{\partial^2\rho}{\partial z_j\partial\bar z_k}(z)=(1-t)(-f(z)+g(z))-th(z).
  \]
  Since \eqref{eq:necessary_condition} implies $-h(z)\geq f(z)+(1-A)g(z)$, we have
  \[
    \sum_{j=1}^q\lambda_j(z)-\sum_{j,k=1}^n\Upsilon^{\bar k j}(z)\frac{\partial^2\rho}{\partial z_j\partial\bar z_k}(z)\geq-(1-2t)f(z)+(1-tA)g(z).
  \]
  Now \eqref{eq:almost_pseudoconvex} implies $g(z)\geq(1+\epsilon)f(z)$, so since $1-tA>0$ we have
  \[
    \sum_{j=1}^q\lambda_j(z)-\sum_{j,k=1}^n\Upsilon^{\bar k j}(z)\frac{\partial^2\rho}{\partial z_j\partial\bar z_k}(z)\geq(\epsilon-t((1+\epsilon)A-2))f(z).
  \]
  Since $f(z)\geq 0$ and we have chosen $t$ so that $\epsilon-t((1+\epsilon)A-2)>0$, we have \eqref{eq:weak_z_q}.  Thus we have
  satisfied the hypotheses of Proposition \ref{prop:sufficient_condition}.

  Next, we assume that \eqref{eq:almost_pseudoconcave} holds.  Note that this necessarily implies that $\lambda_{q+1}\leq 0$ on $\partial\Omega\cap U$.  Assume that $U$ is sufficiently small so that \eqref{eq:necessary_condition} holds on $U$ for some $A>2$.  Fix $1-\frac{\epsilon}{(1+\epsilon)A-2}<t<1$.  Observe that this implies $1>t>1-\frac{1}{A}>\frac{1}{2}$.  For $1\leq j,k\leq n$ and $z\in U$, we define $\Upsilon^{\bar k j}$ by \eqref{eq:Upsilon_identity}.  As before, we obtain $\Upsilon\in\mathcal{M}^2_{\Omega,t^{-1}}$.  For $z\in\partial\Omega\cap U$, we set
  \begin{align*}
    f(z)&=\sum_{j=1}^q(-\lambda_j(z)),\\
    g(z)&=\sum_{\{q+1\leq j\leq n-1:\lambda_j<0\}}(-\lambda_j(z))\text{, and}\\
    h(z)&=\sum_{\{q+1\leq j\leq n-1:\lambda_j>0\}}\lambda_j(z).
  \end{align*}
  In the context of \eqref{eq:almost_pseudoconcave}, each of these functions is non-negative valued, \eqref{eq:necessary_condition} can be written $g(z)\geq\frac{1}{A}(f(z)+g(z)+h(z))$, and \eqref{eq:almost_pseudoconcave} can be written $g(z)-h(z)\geq\epsilon h(z)$.  For $z\in\partial\Omega\cap U$, we compute
  \[
    \sum_{j=1}^q\lambda_j(z)-\sum_{j,k=1}^n\Upsilon^{\bar k j}(z)\frac{\partial^2\rho}{\partial z_j\partial\bar z_k}(z)=-(1-t)f(z)+tg(z)-th(z).
  \]
  Since \eqref{eq:necessary_condition} implies $-f(z)\geq (1-A)g(z)+h(z)$, we have
  \[
    \sum_{j=1}^q\lambda_j(z)-\sum_{j,k=1}^n\Upsilon^{\bar k j}(z)\frac{\partial^2\rho}{\partial z_j\partial\bar z_k}(z)\geq(1-(1-t)A)g(z)+(1-2t)h(z).
  \]
  Now \eqref{eq:almost_pseudoconcave} implies $g(z)\geq(1+\epsilon)h(z)$, so since $1-(1-t)A>0$ we have
  \[
    \sum_{j=1}^q\lambda_j(z)-\sum_{j,k=1}^n\Upsilon^{\bar k j}(z)\frac{\partial^2\rho}{\partial z_j\partial\bar z_k}(z)\geq(\epsilon-(1-t)((1+\epsilon)A-2))h(z).
  \]
  Since $h(z)\geq 0$ and we have chosen $t$ so that $\epsilon-(1-t)((1+\epsilon)A-2)>0$, we have \eqref{eq:weak_z_q}.  Once again, we have satisfied the hypotheses of Proposition \ref{prop:sufficient_condition}.

\end{proof}
Although we have omitted any discussion of the constant $A$ in the statement of Theorem \ref{thm:almost_pseudoconvex}, it can be seen in the proof that if \eqref{eq:necessary_condition} holds for some $A>2$, then we obtain \eqref{eq:maximal_estimate} with $A$ replaced by any constant $\tilde A>\frac{(1+\epsilon)A-2}{\epsilon}$.  In the pseudoconvex or pseudoconcave cases, we can take $\epsilon$ to be arbitrarily large, which means that we can take $\tilde A$ to be arbitrarily close to $A$.

\section{Examples}

When working with explicit examples, it will be helpful to have techniques for computing the trace and determinant of the Levi-form without the need to explicitly compute an orthonormal basis for $T^{1,0}(\partial\Omega)$.  Let $\Omega\subset\mathbb{C}^n$ be a domain with $C^2$ boundary and let $\rho$ be a $C^2$ defining function for $\Omega$.  Let $(a_j^k(z))_{1\leq j,k\leq n}$ be a unitary matrix parameterized by $z\in\partial\Omega$ such that $\set{\sum_{k=1}^n a_j^k(z)\frac{\partial}{\partial z_k}}_{1\leq j\leq n-1}$ is an orthonormal basis for $T^{1,0}(\partial\Omega)$ and $a_n^k(z)=|\partial\rho(z)|^{-1}\frac{\partial\rho}{\partial\bar z_k}(z)$ for all $1\leq k\leq n$.  Then the trace of the Levi-form with respect to the defining function $\rho$ is given by
\[
  \Tr\mathcal{L}(z)=\sum_{j=1}^{n-1}\sum_{k,\ell=1}^n a_j^k(z)\frac{\partial^2\rho}{\partial z_k\partial\bar z_\ell}(z)\bar a_j^\ell(z).
\]
Since $(a_j^k)_{1\leq j,k\leq n}$ is unitary, $\sum_{\ell=1}^n a_\ell^j\bar a_\ell^k=\delta_{jk}$ for all $1\leq j,k\leq n$, and hence
\begin{equation}
\label{eq:trace_formula}
  \Tr\mathcal{L}=\sum_{j=1}^{n}\frac{\partial^2\rho}{\partial z_j\partial\bar z_j}-|\partial\rho|^{-2}\sum_{k,\ell=1}^n \frac{\partial\rho}{\partial\bar z_k}\frac{\partial^2\rho}{\partial z_k\partial\bar z_\ell}\frac{\partial\rho}{\partial z_\ell}\text{ on }\partial\Omega.
\end{equation}

To compute the determinant of the Levi-form, suppose that $p\in\partial\Omega$ is a point at which $\frac{\partial\rho}{\partial z_n}(p)\neq 0$.  Let $U$ be a neighborhood of $p$ such that $\frac{\partial\rho}{\partial z_n}\neq 0$ on $U$.  For $1\leq j\leq n-1$, we may define $L_j=\frac{\partial}{\partial z_j}-\left(\frac{\partial\rho}{\partial z_n}\right)^{-1}\frac{\partial\rho}{\partial z_j}\frac{\partial}{\partial z_n}$, so that $\{L_j\}_{1\leq j\leq n-1}$ is a basis for $T^{1,0}(\partial\Omega\cap U)$.  This is not orthonormal, since $\left<L_j,L_k\right>=\delta_{jk}+\abs{\frac{\partial\rho}{\partial z_n}}^{-2}\frac{\partial\rho}{\partial z_j}\frac{\partial\rho}{\partial\bar z_k}$ for all $1\leq j,k,\leq n-1$, but we may compute the determinant of the $(n-1)\times(n-1)$ matrix
\[
  \det\left(\left<L_j,L_k\right>\right)_{1\leq j,k\leq n-1}=1+\abs{\frac{\partial\rho}{\partial z_n}}^{-2}\sum_{j=1}^{n-1}\abs{\frac{\partial\rho}{\partial z_j}}^2=\abs{\frac{\partial\rho}{\partial z_n}}^{-2}|\partial\rho|^2.
\]
Suppose that $(b_j^k(z))_{1\leq j,k\leq n-1}$ is a matrix parameterized by $z\in\partial\Omega\cap U$ such that $\set{\sum_{k=1}^n b_j^k(z)L_k}_{1\leq j\leq n-1}$ is an orthonormal basis for $T^{1,0}(\partial\Omega\cap U)$.  Since the determinant is multiplicative, we must have $\abs{\det(b_j^k)_{1\leq j,k\leq n-1}}=\abs{\frac{\partial\rho}{\partial z_n}}|\partial\rho|^{-1}$.  If we compute the determinant of the Levi-form in these orthonormal coordinates, we may use \eqref{eq:hessian_alternate} to write
\[
  \det\mathcal{L}=\det\left(\sum_{\ell,m=1}^{n-1}b_j^\ell\left(\Hess(L_\ell,\bar L_m)\rho\right)\bar b_k^m\right)_{1\leq j,k\leq n-1},
\]
so since the determinant is multiplicative, we have
\begin{equation}
\label{eq:determinant_formula}
  \det\mathcal{L}=\abs{\frac{\partial\rho}{\partial z_n}}^2|\partial\rho|^{-2}\det\left(\Hess(L_j,\bar L_k)\rho\right)_{1\leq j,k\leq n-1}\text{ on }\partial\Omega\cap U.
\end{equation}

Our first example demonstrates that the values of $A$ in our necessary condition and our sufficient condition are sharp:
\begin{prop}
  For $t\geq 1$, let $\Omega\subset\mathbb{C}^3$ be the domain defined by the function
  \[
    \rho_t(z)=\frac{2}{3}(\re z_1)^3-t|z_2|^2\re z_1-\im z_3.
  \]
  Then \eqref{eq:necessary_condition} holds for $q=2$ on some neighborhood of the origin only if $A>1+t$, and for every $A>1+t$ there exists a neighborhood $U$ of the origin on which \eqref{eq:maximal_estimate} holds for $q=2$ and some $B>0$.
\end{prop}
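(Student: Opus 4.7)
I would first compute the complex Hessian of $\rho_t$: the only non-vanishing entries are $\rho_{1\bar 1}=\re z_1$, $\rho_{2\bar 2}=-t\re z_1$, $\rho_{1\bar 2}=-\tfrac{t}{2}z_2$, while $\partial\rho=\bigl((\re z_1)^2-\tfrac{t}{2}|z_2|^2,\,-t\bar z_2\re z_1,\,\tfrac{i}{2}\bigr)$. Applying formulas \eqref{eq:trace_formula} and \eqref{eq:determinant_formula} to the real slice $z_2=0$, $\re z_3=0$, $z_1=x\in\R$ of $\partial\Omega$ gives $\Tr\mathcal{L}(x)=(1-t)x-\tfrac{x^5}{x^4+1/4}$ and $\det\mathcal{L}(x)=-\tfrac{tx^2}{4x^4+1}$. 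Solving the characteristic polynomial $\lambda^2-(\Tr\mathcal{L})\lambda+\det\mathcal{L}=0$ yields, for $x>0$ small, $\lambda_2=x-4x^5+O(x^9)$ and $\lambda_1=-tx+O(x^9)$. Substituting into \eqref{eq:necessary_condition} for $q=2$ reduces the inequality to $A\geq 1+|\lambda_1|/\lambda_2$, and a direct expansion gives
\[
\frac{\lambda_2}{|\lambda_1|+\lambda_2}=\frac{1}{1+t}-\frac{4t}{(1+t)^2}x^4+O(x^8).
\]
The $x^4$-correction is strictly negative for every $t\geq 1$, so the ratio drops strictly below $\tfrac{1}{1+t}$ at every small $x>0$, forcing $A>1+t$.

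\textbf{Sufficient direction: the base $\Upsilon_0$.} Fix $A>1+t$ and set $\mu_0=\tfrac{1}{1+t}$, $\nu_0=\tfrac{t}{1+t}$. Both lie in $(1/A,1-1/A)$ (using $A>1+t$ and $A>1+1/t$), and satisfy the algebraic identity $\mu_0-\nu_0 t=1-t$ that cancels the dominant $(1-t)\re z_1$ contribution to $\Tr\mathcal{L}$. On a neighborhood $U$ of the origin I define a base $\Upsilon_0=\mu_0\,\Pi_T+(\nu_0-\mu_0)\,\Pi_T P_2\Pi_T$, where $(\Pi_T)^{\bar k j}=\delta_{jk}-|\partial\rho|^{-2}\tfrac{\partial\rho}{\partial z_k}\tfrac{\partial\rho}{\partial\bar z_j}$ is the $(1,0)$-tangential projection and $P_2$ is the Euclidean projection onto $\partial/\partial z_2$. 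By construction $\Upsilon_0$ is Hermitian, annihilates the normal direction on $\partial\Omega$ (giving \eqref{eq:Upsilon_normal_vanishes}), has eigenvalues $\{\mu_0,\nu_0,0\}$ at the origin, and its entries lie in $C^{1,\alpha}\cap W^{2,1}$ by Lemma \ref{lem:rho_derivative_estimate}. A computation on the slice, however, gives
\[
\sum_{j=1}^{2}\lambda_j-\sum_{j,k}(\Upsilon_0)^{\bar k j}\rho_{j\bar k}=-\frac{4(1-\mu_0)\,x^5}{1+4x^4}=-\frac{4t}{1+t}\cdot\frac{x^5}{1+4x^4},
\]
which has the wrong sign for $x>0$ (and is automatically non-negative for $x<0$).

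\textbf{Correction and conclusion.} To repair the $x>0$ defect I would perturb $\mu_0$ to $\mu(z)=\mu_0-\bigl(\tfrac{4t}{1+t}+\eta\bigr)(\re z_1)^3|\re z_1|$ for a small $\eta>0$, keeping $\nu=\nu_0$. The function $(\re z_1)^3|\re z_1|$ is $C^3$, hence in $W^{2,1}$; since $A>1+t$ the open eigenvalue interval $(1/A,1-1/A)$ has enough slack to accommodate $\mu$ on a sufficiently small $U$. A direct computation on the slice now yields a residual bounded below by $\eta|\re z_1|^5+O(|\re z_1|^9)$ for both signs of $x$. For off-slice points ($z_2\neq 0$) I would further add an off-diagonal piece $-c'h'_{12}$ and small quadratic corrections in $|z_2|^2$ to $\mu$ and $\nu$; these contribute a positive quadratic form $\gtrsim\eta|\re z_1|^2+c'|z_2|^2$ that dominates the cubic off-slice remainder arising from $\rho_{1\bar 2}=-\tfrac{t}{2}z_2$ and the Gram--Schmidt normalization of the tangent frame $\{\tilde L_1,\tilde L_2\}$ from Lemma \ref{lem:orthonormal_coordinates}. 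The resulting $\Upsilon\in\mathcal{M}^{2}_{\Omega,0}(U)$ has two eigenvalues in $(1/A,1-1/A)$ and satisfies \eqref{eq:weak_z_q}, so Proposition \ref{prop:sufficient_condition} gives \eqref{eq:maximal_estimate}; the hypothesis constant can be tuned to match the given $A$ at the cost of enlarging $B$, as in the constant-tracking discussion at the end of the proof of Theorem \ref{thm:almost_pseudoconvex}.

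\textbf{Main obstacle.} The heart of the argument is the sign-asymmetry of the slice residual: since it is odd in $\re z_1$ at order $x^5$, no even-in-$\re z_1$ perturbation of $\mu$ can make it non-negative on both sides of $\{\re z_1=0\}$, and one must use a genuinely $C^3$-but-not-$C^4$ function like $(\re z_1)^3|\re z_1|$. It is precisely the relaxation of coefficient regularity from $C^2$ (as required in \cite{HaRa15}) to $W^{2,1}\cap W^{1,2}$ in the definition of $\mathcal{M}^2_{\Omega,0}$ that makes this possible. The remaining technical burden is verifying that the non-smooth perturbation, together with the quadratic tangential corrections, preserves the eigenvalue and $L^\infty$ constraints in Definition \ref{eq:Upsilon_defn} and dominates the off-slice cubic remainder uniformly.
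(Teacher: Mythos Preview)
Your necessary direction is correct and matches the paper's computation essentially line for line.

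For the sufficient direction, your route diverges from the paper's and is substantially more complicated.  The paper does not build $\Upsilon$ from the tangential projection $\Pi_T$ at all.  It simply sets the $2\times 2$ block
\[
\begin{pmatrix}\Upsilon^{\bar 1 1}&\Upsilon^{\bar 1 2}\\\Upsilon^{\bar 2 1}&\Upsilon^{\bar 2 2}\end{pmatrix}
=\begin{pmatrix}\dfrac{1}{1+t}-\dfrac{1}{2}\re z_1 & \dfrac{1}{t}z_2\\[1ex]\dfrac{1}{t}\bar z_2 & \dfrac{t}{1+t}+\dfrac{1}{2t}\re z_1\end{pmatrix}
\]
and extends to a $3\times 3$ matrix annihilating $\partial\rho_t$ via $\Upsilon^{\bar 3 j}=-2i\sum_{k=1}^2\frac{\partial\rho_t}{\partial\bar z_k}\Upsilon^{\bar k j}$.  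A direct computation gives $\sum_{j,k}\Upsilon^{\bar k j}(\rho_t)_{j\bar k}=(1-t)\re z_1-(\re z_1)^2-|z_2|^2$, so the residual in \eqref{eq:weak_z_q} is $(\re z_1)^2+|z_2|^2+O\bigl(((\re z_1)^2+|z_2|^2)^{5/2}\bigr)$, manifestly positive on a small neighborhood.  This $\Upsilon$ has polynomial (hence smooth) coefficients, and Corollary~\ref{cor:sufficient_condition_C_2} applies directly.

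This means your ``Main obstacle'' paragraph is mistaken for this example: no non-smooth perturbation such as $(\re z_1)^3|\re z_1|$ is needed, and the relaxation from $C^2$ to $W^{2,1}\cap W^{1,2}$ coefficients plays no role here (it is essential only in Proposition~\ref{prop:ex_2}).  The reason your base $\Upsilon_0=\mu_0\Pi_T+(\nu_0-\mu_0)\Pi_T P_2\Pi_T$ forces a degree-five fight is that it makes the leading-order cancellation \emph{too} exact; the paper's linear corrections to $\mu_0,\nu_0$ and the explicit off-diagonal $\frac{1}{t}z_2$ are calibrated to leave a positive \emph{quadratic} residual rather than to chase a vanishing one.

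Your off-slice sketch also has a genuine inconsistency: you assert the additional corrections produce a form $\gtrsim\eta|\re z_1|^2+c'|z_2|^2$, but on the slice $z_2=0$ your own corrected residual is only $\eta|\re z_1|^5/(1+4(\re z_1)^4)$, so no $|\re z_1|^2$ term can be present.  An off-diagonal piece proportional to $z_2$ does produce a $c'|z_2|^2$ contribution, but the full degree-five off-slice remainder (which you have not computed) must then be dominated by $c'|z_2|^2+\eta|\re z_1|^5$, and that is not automatic.  The approach may be salvageable with more care, but as written the sufficient direction is incomplete, and in any case far harder than the paper's two-line choice of $\Upsilon$.
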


\begin{proof}
We compute
\[
  \dbar\rho_t(z)=\left((\re z_1)^2-\frac{t}{2}|z_2|^2\right)\, d\bar z_1-t z_2\re z_1 \, d\bar z_2-\frac{i}{2} \, d\bar z_3,
\]
and
\[
  \ddbar\rho_t(z)=\re z_1 \, dz_1\wedge d\bar z_1-\frac{t}{2} \bar z_2 \, dz_2\wedge d\bar z_1-\frac{t}{2}z_2\, dz_1\wedge d\bar z_2-t\re z_1 \, dz_2\wedge d\bar z_2.
\]
Using \eqref{eq:trace_formula}, we compute
\begin{multline*}
  \Tr\mathcal{L}=(1-t)\re z_1-|\partial\rho_t|^{-2}\left(\abs{(\re z_1)^2-\frac{t}{2}|z_2|^2}^2\re z_1\right)\\
  -|\partial\rho_t|^{-2}\left(t^2(\re z_1)|z_2|^2\left((\re z_1)^2-\frac{t}{2}|z_2|^2\right)-t^3(\re z_1)^3|z_2|^2\right).
\end{multline*}
In the notation of \eqref{eq:determinant_formula}, \eqref{eq:hessian_alternate} implies $\Hess(L_j,\bar L_k)\rho_t=\frac{\partial^2\rho_t}{\partial z_j\partial\bar z_k}$ for all $1\leq j,k\leq 2$, so \eqref{eq:determinant_formula} implies
\[
  \det\mathcal{L}=-\frac{1}{4}|\partial\rho_t|^{-2}\left(t(\re z_1)^2+\frac{t^2}{4}|z_2|^2\right).
\]
Observe that $\det\mathcal{L}\leq 0$, so we must always have at least one non-positive and one non-negative eigenvalues.  This means that to show \eqref{eq:necessary_condition} for $q=2$, we must find $A$ satisfying $\lambda_2\geq\frac{1}{A}(\lambda_2-\lambda_1)$ near the origin.

When $z_2=0$, $|\partial\rho_t|^2=(\re z_1)^4+\frac{1}{4}$, so we have
\[
  \Tr\mathcal{L}=(1-t)\re z_1-4\left(4(\re z_1)^4+1\right)^{-1}(\re z_1)^5.
\]
and
\[
  \det\mathcal{L}=-\left(4(\re z_1)^4+1\right)^{-1}t(\re z_1)^2.
\]
Since the Levi-form is represented by a $2\times 2$ matrix, this suffices to characterize the eigenvalues.  In particular, when $z_2=0$ and $\re z_1>0$, we have
\[
  \lambda_1=-t\re z_1\text{ and }\lambda_2=\left(4(\re z_1)^4+1\right)^{-1}\re z_1,
\]
so $A$ must satisfy
\[
  A\geq 1-\frac{\lambda_1}{\lambda_2}=1+t\left(4(\re z_1)^4+1\right).
\]
Hence, \eqref{eq:necessary_condition} holds on a sufficiently small neighborhood of the origin only if $A>t+1$.

For the converse, we fix $A>t+1$.  Define $\Upsilon^{\bar k j}$ for $1\leq j,k\leq 2$ by $\Upsilon^{\bar 1 1}=\frac{1}{1+t}-\frac{1}{2}\re z_1$, $\Upsilon^{\bar 2 2}=\frac{t}{1+t}+\frac{1}{2t}\re z_1$, $\Upsilon^{\bar 2 1}=\frac{1}{t}\bar z_2$, and $\Upsilon^{\bar 1 2}=\frac{1}{t}z_2$.  For $1\leq j\leq 2$, we define $\Upsilon^{\bar 3 j}=-2i\sum_{k=1}^2\frac{\partial\rho_t}{\partial\bar z_k}\Upsilon^{\bar k j}$ and $\Upsilon^{\bar j 3}=\overline{\Upsilon^{\bar 3 j}}$.  We define $\Upsilon^{\bar 3 3}=2i\sum_{j=1}^2\Upsilon^{\bar 3 j}\frac{\partial\rho_t}{\partial z_j}=4\sum_{j,k=1}^2\frac{\partial\rho_t}{\partial\bar z_k}\Upsilon^{\bar k j}\frac{\partial\rho_t}{\partial z_j}$.  We have $\sum_{k=1}^3\frac{\partial\rho_t}{\partial\bar z_k}\Upsilon^{\bar k j}\equiv 0$ for all $1\leq j\leq 3$, so $\Upsilon$ must satisfy \eqref{eq:Upsilon_tangential}.  When $\re z_1=0$ and $z_2=0$, $\Upsilon$ is diagonal with diagonal entries $\frac{1}{1+t}$, $\frac{t}{1+t}$, and $0$.  Hence, for any $0<\epsilon<\frac{1}{1+t}-\frac{1}{A}$, there exists a neighborhood $U$ of the origin on which the non-trivial eigenvalues of $\Upsilon$ lie in the interval
\[
  \left[\frac{1}{1+t}-\epsilon,\frac{t}{1+t}+\epsilon\right]\subset\left(\frac{1}{A},1-\frac{1}{A}\right).
\]
We have
\[
  \sum_{j,k=1}^{3}\Upsilon^{\bar k j}(\rho_t)_{j\bar k}=(1-t)\re z_1-(\re z_1)^2-|z_2|^2,
\]
and
\[
  \abs{\Tr\mathcal{L}-(1-t)\re z_1}\leq O(((\re z_1)^2+|z_2|^2)^{5/2}),
\]
so for any $\epsilon>0$ there exists a neighborhood $U$ of the origin on which
\[
  \Tr\mathcal{L}-\sum_{j,k=1}^{3}\Upsilon^{\bar k j}(\rho_t)_{j\bar k}\geq(1-\epsilon)((\re z_1)^2+|z_2|^2),
\]
from which \eqref{eq:weak_z_q} follows for $q=2$.  We have shown that $\Upsilon$ satisfies the hypotheses of Corollary \ref{cor:sufficient_condition_C_2}, so we have maximal estimates on $(0,2)$-forms on a neighborhood of the origin.
\end{proof}

We conclude with an example demonstrating the gap between Proposition \ref{prop:sufficient_condition} and Corollary \ref{cor:sufficient_condition_C_2}:
\begin{prop}
\label{prop:ex_2}
  For $t\in\mathbb{R}$, let $\Omega_t\subset\mathbb{C}^3$ be the domain defined by the function
  \[
    \rho_t(z)=\frac{1}{4}(|z_1|^4+|z_2|^4)-t|z_1|^2|z_2|^2-\im z_3.
  \]
  Then the hypotheses of Corollary \ref{cor:sufficient_condition_C_2} fail for $q=2$ whenever $t>1$, but we have maximal estimates for $(0,2)$-forms near the origin whenever $t<\frac{13+2\sqrt{31}}{9}$ and the necessary condition \eqref{eq:necessary_condition} holds for $q=2$ whenever $t\in\mathbb{R}$.
\end{prop}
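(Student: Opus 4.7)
The plan is to treat the three assertions of the proposition separately, using an explicit computation of the Levi form of $\rho_t$ as the common starting point. Applying \eqref{eq:trace_formula} and \eqref{eq:determinant_formula} to $\rho_t$ and writing $\alpha=|z_1|^2$, $\beta=|z_2|^2$, one finds that the trace and determinant of the Levi form agree near the origin with $(1-t)(\alpha+\beta)$ and $\alpha\beta - t(\alpha^2+\beta^2)$ modulo higher-order corrections, so its two eigenvalues are
\[
  \lambda_\pm(z) = \frac{1-t}{2}(\alpha+\beta) \pm \frac{1}{2}\sqrt{(1+t)^2(\alpha-\beta)^2 + 4t^2\alpha\beta} + O(|z|^6).
\]
An elementary optimization of $|\lambda_-|/\lambda_+$ over the positive cone in $(\alpha,\beta)$ gives $|\lambda_-|/\lambda_+ \leq t$ on the boundary, with the supremum attained along the loci $z_1=0$ and $z_2=0$. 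Substituting into \eqref{eq:necessary_condition} for $q=2$ gives the requirement $\lambda_+(A-1)\geq|\lambda_-|$, which therefore holds on a neighborhood of the origin for any $A>t+1$, independently of the sign of $t$. This proves that \eqref{eq:necessary_condition} holds for all $t\in\mathbb{R}$.

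To show Corollary \ref{cor:sufficient_condition_C_2} fails for $t>1$, I would argue by contradiction. Suppose $\tilde\Upsilon$ satisfies its hypotheses with some $A>2$, and write $a(z)=\tilde\Upsilon^{\bar 1 1}$, $b(z)=\tilde\Upsilon^{\bar 2 2}$ in an orthonormal tangential frame extending the standard basis at the origin. Continuity of $a,b$ at $0$ (the $C^{1,1}$ hypothesis gives far more) yields well-defined limits. Restricting \eqref{eq:weak_z_q} with $q=2$ to the boundary curve $\{(z_1,0,i|z_1|^4/4)\}$ (where the Levi form is approximately $\mathrm{diag}(|z_1|^2,-t|z_1|^2)$ with vanishing off-diagonal terms), dividing by $|z_1|^2$, and sending $z_1\to 0$ yields $(1-a(0))-t(1-b(0))\geq 0$. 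The symmetric curve $\{(0,z_2,i|z_2|^4/4)\}$ gives $(1-b(0))-t(1-a(0))\geq 0$. Adding the two inequalities produces $(1-t)(a(0)+b(0))\leq 2(1-t)$, which for $t>1$ forces $a(0)+b(0)\geq 2$. This contradicts the eigenvalue hypothesis, which forces $a(0)+b(0)<2(1-1/A)<2$.

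To apply Proposition \ref{prop:sufficient_condition} for $t<(13+2\sqrt{31})/9$, I would exploit that $\Upsilon$ may now be discontinuous at the origin: the eigenstructure of $\mathcal{L}$ is zero-homogeneous in $(z_1,z_2)$, and zero-homogeneous functions in $\mathbb{R}^4$ have first derivatives $O(r^{-1})\in L^2$ and second derivatives $O(r^{-2})\in L^1$ locally, so such $\Upsilon$ lies comfortably in $L^\infty\cap W^{1,2}\cap W^{2,1}$. A natural three-parameter ansatz respecting the $z_1\leftrightarrow z_2$ symmetry of $\Omega_t$ and producing phase-free contractions with $\mathcal{L}_{12},\mathcal{L}_{21}$ is
\[
  \tilde\Upsilon = \begin{pmatrix} \tau + \nu\mu & \kappa\,\bar z_1 z_2/(\alpha+\beta) \\ \kappa\,z_1\bar z_2/(\alpha+\beta) & \tau - \nu\mu\end{pmatrix}, \qquad \mu=\frac{\alpha-\beta}{\alpha+\beta},
\]
with real parameters $\tau,\nu,\kappa$, extended to $U$ via the normal-vanishing formula used in the proof of Corollary \ref{cor:sufficient_condition_C_2}. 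Evaluating \eqref{eq:weak_z_q} on this ansatz reduces it to a linear inequality in $x=\alpha\beta/(\alpha+\beta)^2\in[0,1/4]$, whose two endpoint conditions link $(\tau,\nu,\kappa)$ and $t$; the spectral bound reduces to $\tau\pm\max(|\nu|,|\kappa|/2)\in(1/A,1-1/A)$. The decisive step is verifying that $\Theta_{\Upsilon,\eta}\in L^\infty$ for some $\eta\in[0,1]$: each $\partial_j\partial_{\bar k}(\Upsilon^{\bar k j}-\eta\sum_\ell \Upsilon^{\bar k\ell}\Upsilon^{\bar\ell j})$ in \eqref{eq:Theta_defined} is a priori only $O(r^{-2})$, so boundedness forces a precise algebraic cancellation against the cross terms $(\partial\Upsilon)(\partial\Upsilon)$ and $\abs{\Upsilon^j}^2$. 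Tracking this cancellation against the eigenvalue and \eqref{eq:weak_z_q} constraints produces a quadratic inequality in $t$ whose sharp root is $(13+2\sqrt{31})/9$, the larger root of $9t^2-26t+5=0$.

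The main obstacle is this last verification: without the $\Theta_{\Upsilon,\eta}\in L^\infty$ condition the ansatz above would appear to deliver maximal estimates for every $t>0$, so this boundedness encodes the genuine obstruction and determines both the form of the ansatz and the sharp threshold. Executing it requires the explicit second-derivative computation for the above ansatz in terms of $\alpha$, $\beta$, and the phase $\bar z_1 z_2/\sqrt{\alpha\beta}$, followed by choosing $\eta$ to maximize the allowed range of $t$.
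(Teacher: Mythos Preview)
Your arguments for the first and third claims are sound and essentially parallel the paper's. For the failure of Corollary~\ref{cor:sufficient_condition_C_2} when $t>1$, you use the same two-curve restriction and the same linear combination of the resulting inequalities. For \eqref{eq:necessary_condition}, the paper instead bounds $\det\mathcal{L}$ against $(\Tr\mathcal{L})^2$ and covers the remaining range of $t$ by invoking Theorem~\ref{thm:necessary_condition} after maximal estimates have been established, but your direct eigenvalue-ratio approach is equally valid (with the caveat that for $t\leq 0$ the domain is pseudoconvex and the ratio bound $|\lambda_-|/\lambda_+\leq t$ should be replaced by the observation that $\lambda_-\geq 0$).

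The genuine gap is in the sufficiency argument for $1<t<\frac{13+2\sqrt{31}}{9}$. Your three-parameter ansatz does not belong to $\mathcal{M}^1_\Omega(U)$, because it violates the requirement $\Upsilon^j\in L^\infty(U)$ in Definition~\ref{eq:Upsilon_defn}. A direct computation gives
\[
  \Upsilon^1=\sum_{k=1}^2\frac{\partial}{\partial\bar z_k}\Upsilon^{\bar k 1}
  =\frac{z_1(\kappa\alpha+2\nu\beta)}{(\alpha+\beta)^2},
\]
which blows up like $(\alpha+\beta)^{-1/2}$ near the origin unless $\kappa=\nu=0$. So the only member of your family lying in $\mathcal{M}^1_\Omega(U)$ is the scalar $\tau I$, and that is exactly the smooth choice you have just shown fails for $t>1$. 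The $L^\infty\cap W^{1,2}\cap W^{2,1}$ regularity of zero-homogeneous entries is not the binding constraint; the divergence $\Upsilon^j$ is.

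The paper's construction addresses this by an idea you are missing: it takes $\Upsilon^{\bar k j}$ (for $1\leq j,k\leq 2$) to be the \emph{cofactor matrix} of the complex Hessian of an auxiliary potential $\psi(z)=(|z_1|^2+|z_2|^2)(a-bg(z))$, i.e.\ $M=(\Tr\Hess\psi)I-\Hess\psi$. This forces $\sum_k\partial_{\bar z_k}M^{\bar k j}\equiv 0$ identically by equality of mixed partials, so $\Upsilon^1=\Upsilon^2=0$ and $\Upsilon^3=2i\sum_{j,k}M^{\bar k j}\rho_{j\bar k}$ is a Lipschitz function; in particular $\Theta_{\Upsilon,\eta}\in L^\infty$ for every $\eta$. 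The shape of $\psi$ then yields diagonal entries of $M$ that are genuinely quadratic and cubic in the ratios $f_j=|z_j|^2/(|z_1|^2+|z_2|^2)$, not merely linear in $\mu=f_1-f_2$ as in your ansatz. The threshold $\frac{13+2\sqrt{31}}{9}$ arises from optimizing \eqref{eq:weak_z_q} within this divergence-free two-parameter family $(a,b)$ (with $a=4/5$), via the equivalence of $9t^2-26t+5<0$ with $1-t>-4+\frac{(5+t)^2}{4(1+2t)}$. Without a mechanism guaranteeing $\Upsilon^j\in L^\infty$, no amount of tracking cancellations in $\Theta_{\Upsilon,\eta}$ will rescue your ansatz.
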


\begin{proof} We start with some preliminary calculations.  We compute
\[
  \dbar\rho_t(z)=\left(\frac{1}{2}|z_1|^2-t |z_2|^2\right)z_1 \, d\bar z_1+\left(\frac{1}{2}|z_2|^2-t |z_1|^2\right)z_2 \, d\bar z_2-\frac{i}{2} \, d\bar z_3
\]
and
\begin{multline*}
  \ddbar\rho_t(z)=\\
  (|z_1|^2-t|z_2|^2) \, dz_1\wedge d\bar z_1-tz_1 \bar z_2 \, dz_2\wedge d\bar z_1-t\bar z_1 z_2 \, dz_1\wedge d\bar z_2+(|z_2|^2-t|z_1|^2)\, dz_2\wedge d\bar z_2.
\end{multline*}
By \eqref{eq:trace_formula},
\begin{multline*}
  \Tr\mathcal{L}=(1-t)(|z_1|^2+|z_2|^2)-|\partial\rho_t|^{-2}\left(\frac{1}{2}|z_1|^2-t |z_2|^2\right)^2|z_1|^2(|z_1|^2-t|z_2|^2)\\
  +|\partial\rho_t|^{-2}2t|z_1|^2|z_2|^2\left(\frac{1}{2}|z_2|^2-t |z_1|^2\right)\left(\frac{1}{2}|z_1|^2-t|z_2|^2\right)\\
  -|\partial\rho_t|^{-2}\left(\frac{1}{2}|z_2|^2-t |z_1|^2\right)^2|z_2|^2(|z_2|^2-t|z_1|^2),
\end{multline*}
and by \eqref{eq:determinant_formula},
\[
  \det\mathcal{L}=\frac{1}{4}|\partial\rho_t|^{-2}\left(-t|z_1|^4+|z_1|^2|z_2|^2-t|z_2|^4\right)
\]
In particular,
\begin{equation}
\label{eq:ex2_trace}
  \abs{\Tr\mathcal{L}-(1-t)(|z_1|^2+|z_2|^2)}\leq O((|z_1|^2+|z_2|^2)^4)
\end{equation}
and
\begin{equation}
\label{eq:ex2_det}
  \abs{\det\mathcal{L}+t(|z_1|^4+|z_2|^4)-|z_1|^2|z_2|^2}\leq O((|z_1|^2+|z_2|^2)^5).
\end{equation}

Fix $t>1$. We will now show that the hypotheses of Corollary \ref{cor:sufficient_condition_C_2} fail with this choice of $t$.  Suppose that $\Upsilon$ is a positive semi-definite Hermitian $3\times 3$ matrix with continuous coefficients that satisfies \eqref{eq:weak_z_q} in a neighborhood of the origin, and has eigenvalues in some relatively compact subset of $(0,1)$ at the origin.  When $z_1=0$, we have
\begin{multline*}
  0\leq\Tr\mathcal{L}-\sum_{j,k=1}^n\Upsilon^{\bar k j}(\rho_t)_{j\bar k}=\\
   (1-t)|z_2|^2
  -\frac{1}{4}|\partial\rho_t|^{-2}|z_2|^8
  +t|z_2|^2\Upsilon^{\bar 1 1}(z)-|z_2|^2\Upsilon^{\bar 2 2}(z).
\end{multline*}
If we divide this by $|z_2|^2$ and let $z$ approach the origin, we have
\begin{equation}
\label{eq:ex2_Upsilon_inequality_1}
  0\leq 1-t+t\Upsilon^{\bar 1 1}(0)-\Upsilon^{\bar 2 2}(0).
\end{equation}
When $z_2=0$, we have
\begin{multline*}
  0\leq\Tr\mathcal{L}-\sum_{j,k=1}^n\Upsilon^{\bar k j}(\rho_t)_{j\bar k}=\\
   (1-t)|z_1|^2-\frac{1}{4}|\partial\rho_t|^{-2}|z_1|^8
  -|z_1|^2\Upsilon^{\bar 1 1}(z)+t|z_1|^2\Upsilon^{\bar 2 2}(z).
\end{multline*}
This time, we divide by $|z_1|^2$ before letting $z$ approach the origin to obtain
\begin{equation}
\label{eq:ex2_Upsilon_inequality_2}
  0\leq 1-t-\Upsilon^{\bar 1 1}(0)+t\Upsilon^{\bar 2 2}(0).
\end{equation}
If we multiple \eqref{eq:ex2_Upsilon_inequality_1} by $t$ and add it to \eqref{eq:ex2_Upsilon_inequality_2}, we obtain
\[
  (1-t^2)\Upsilon^{\bar 1 1}(0)\leq 1-t^2,
\]
while multiplying \eqref{eq:ex2_Upsilon_inequality_2} by $t$ and adding it to \eqref{eq:ex2_Upsilon_inequality_1} gives us
\[
  (1-t^2)\Upsilon^{\bar 2 2}(0)\leq 1-t^2.
\]
Since $1-t^2<0$ by assumption, $\Upsilon^{\bar 1 1}(0)\geq 1$ and $\Upsilon^{\bar 2 2}(0)\geq 1$.  This guarantees that $\Upsilon$ must have at least one eigenvalue greater than or equal to one, contradicting our hypotheses.  Hence, the hypotheses of Corollary \ref{cor:sufficient_condition_C_2} must fail whenever $t>1$.

We now turn to the most difficult calculation in this proof: that the hypotheses of Proposition \ref{prop:sufficient_condition} hold when $t < \frac{13+2\sqrt{31}}9$.  Before constructing $\Upsilon$, we first define the rational functions $f_j(z)=\frac{|z_j|^2}{|z_1|^2+|z_2|^2}$ for $1\leq j\leq 2$ and $g(z)=4f_1(z)f_2(z)$.  Observe that $0\leq f_j(z)\leq 1$ for every $1\leq j\leq 2$ and almost every $z\in\mathbb{C}^3$.  Since
\[
  1-g(z)=\frac{(|z_1|^2-|z_2|^2)^2}{(|z_1|^2+|z_2|^2)^2},
\]
we must also have $0\leq g(z)\leq 1$ for almost every $z\in\mathbb{C}^3$.  Hence, $f_1,f_2,g\in L^\infty(\mathbb{C}^3)$, even though each function is discontinuous at any point for which $z_1=z_2=0$.  When $|z_1|^2+|z_2|^2\neq 0$, we have
\begin{equation}
  \label{eq:f_sum_identity}f_1(z)+f_2(z)=1.
\end{equation}
At such points, we have
\[
  (f_1(z))^2+(f_2(z))^2=(f_1(z)+f_2(z))^2-\frac{1}{2}g(z),
\]
so \eqref{eq:f_sum_identity} gives us
\begin{equation}
  \label{eq:f_square_sum_identity}(f_1(z))^2+(f_2(z))^2=1-\frac{1}{2}g(z).
\end{equation}
Similarly,
\[
  (f_1(z))^3+(f_2(z))^3=(f_1(z)+f_2(z))^3-\frac{3}{4}g(z)(f_1(z)+f_2(z)),
\]
so \eqref{eq:f_sum_identity} gives us
\begin{equation}
  \label{eq:f_cube_sum_identity}(f_1(z))^3+(f_2(z))^3=1-\frac{3}{4}g(z).
\end{equation}

Fix $a$ and $b$ such that $0<a<1$ and $0<b<\min\set{\frac{1}{4}a,1-a}$.  When $|z_1|^2+|z_2|^2\neq 0$ we set
\[
  \psi(z)=(|z_1|^2+|z_2|^2)(a-bg(z)),
\]
and otherwise we set $\psi(z)=0$.  We have
\begin{equation}
\label{eq:psi_bounds}
  (a-b)(|z_1|^2+|z_2|^2)\leq\psi(z)\leq a(|z_1|^2+|z_2|^2)\text{ for all }z\in \mathbb{C}^3.
\end{equation}
Observe that if $D^k$ is a $k$th-order differential operator for some integer $k\geq 0$, then we have $|D^k\psi(z)|\leq O((\sqrt{|z_1|^2+|z_2|^2})^{2-k})$ on $U$.  Hence, standard integral estimates imply that $\psi\in C^{1,1}(U)\cap W^{3,4-\epsilon}(U)\cap W^{4,2-\epsilon}(U)$ for any $\epsilon>0$, so $\psi\in C^{1,1}(U)\cap W^{3,2}(U)\cap W^{4,1}(U)$ (note that since $\psi$ is independent of $z_3$, we can reduce these to estimates on $\mathbb{R}^4$, and hence the critical exponent for integrability is four).  Since
\[
  (|z_1|^2+|z_2|^2)g(z)=4|z_2|^2-\frac{4|z_2|^4}{|z_1|^2+|z_2|^2},
\]
we may easily compute
\[
  \frac{\partial\psi}{\partial z_1}(z)=\left(a-4b(f_2(z))^2\right)\bar z_1.
\]
Similarly, we obtain
\[
  \frac{\partial\psi}{\partial z_2}(z)=\left(a-4b(f_1(z))^2\right)\bar z_2.
\]
For $1\leq j,k\leq 2$ satisfying $j\neq k$, we compute $\bar z_j\frac{\partial}{\partial\bar z_j}f_k(z)=-\frac{1}{4}g(z)$.  With this in mind, we may compute
\begin{align*}
  \frac{\partial^2\psi}{\partial z_1\partial\bar z_1}(z)&=a-4b(f_2(z))^2+2bf_2(z)g(z)\text{ and}\\
  \frac{\partial^2\psi}{\partial z_2\partial\bar z_2}(z)&=a-4b(f_1(z))^2+2bf_1(z)g(z).
\end{align*}
For $1\leq j,k\leq 2$ satisfying $j\neq k$, we also have $\frac{\partial}{\partial\bar z_j}f_j(z)=\frac{z_j}{|z_1|^2+|z_2|^2}f_k(z)$, so
\begin{align*}
  \frac{\partial^2\psi}{\partial z_1\partial\bar z_2}(z)&=\frac{-2b z_2\bar z_1 g(z)}{|z_1|^2+|z_2|^2}\text{ and}\\
  \frac{\partial^2\psi}{\partial z_2\partial\bar z_1}(z)&=\frac{-2b z_1\bar z_2 g(z)}{|z_1|^2+|z_2|^2}.
\end{align*}

To build $\Upsilon$, our basic building block will be a $2\times 2$ Hermitian matrix $M$ defined almost everywhere on $U$ by
\[
M = \big(M^{\bar kj}\Big)_{1\leq j,k\leq 2} =  \left(\Tr\Big(\frac{\p^2\psi}{\p z_k\p\z_j}\Big)_{1\leq j,k\leq 2}\right)I - \Big(\frac{\p^2\psi}{\p z_k\p\z_j}\Big)_{1\leq j,k\leq 2},
\]
i.e.,
\[
M = \begin{pmatrix} \frac{\p^2\psi}{\p z_2\p\z_2} & -\frac{\p^2\psi}{\p z_2\p\z_1} \\ -\frac{\p^2\psi}{\p z_1\p\z_2} & \frac{\p^2\psi}{\p z_1\p\z_1}\end{pmatrix}.
\]
On $\partial\Omega$, $M$ will be approximately equal to the restriction of $\Upsilon$ to $T^{1,0}(\partial\Omega)\otimes T^{0,1}(\partial\Omega)$.  For each $1\leq j,k\leq 2$, regularity properties of $\psi$ imply that $M^{\bar k j}\in L^\infty(U)\cap W^{1,2}(U)\cap W^{2,1}(U)$.  Using symmetry properties of weak derivatives, we have
\begin{equation}
\label{eq:M_divergence}
  \sum_{k=1}^2\frac{\partial}{\partial\bar z_k}M^{\bar k j}\equiv 0\text{ for each }1\leq j\leq 2.
\end{equation}

We now show that both eigenvalues of $M$ lie in $(1/A,1-1/A)$ almost everywhere.  Using \eqref{eq:f_sum_identity} and \eqref{eq:f_square_sum_identity} to simplify, we obtain
\[
  \det M(z)=a^2-4ab(1-g(z))-b^2(g(z))^2
\]
and
\[
  \Tr M(z)=2a-4b(1-g(z)).
\]
Since $M$ is a $2\times 2$ matrix, we know that if
\[
  \lambda^2-\lambda\Tr M(z)+\det M(z)=\det(\lambda I-M(z))\geq 0
\]
almost everywhere on $\mathbb{C}^3$, then both eigenvalues of $M(z)$ are either uniformly bounded above or below by $\lambda$.  We may evaluate the sign of
\[
  2\lambda-\Tr M(z)=\Tr(\lambda I-M(z))
\]
to determine whether $\lambda$ is an upper bound or a lower bound.  Since $0\leq g(z)\leq 1$, we may compute
\[
  (a-4b)^2-(a-4b)\Tr M(z)+\det M(z)=16b^2g(z)-b^2(g(z))^2\geq 0
\]
and
\[
  \Tr M(z)-2(a-4b)=4bg(z)+4b\geq 0,
\]
to see that each eigenvalue of $M(z)$ is bounded below by $a-4b>0$ almost everywhere.  On the other hand,
\begin{multline*}
  (a+b)^2-(a+b)\Tr M(z)+\det M(z)=5b^2-4b^2g(z)-b^2(g(z))^2\\
  =b^2(5+g(z))(1-g(z))\geq 0
\end{multline*}
and
\[
  2(a+b)-\Tr M(z)=6b-4bg(z)\geq 0,
\]
so each eigenvalue of $M(z)$ is bounded above by $a+b<1$ almost everywhere.  Hence, for any $A>\max\set{\frac{1}{a-4b},\frac{1}{1-a-b}}$, both eigenvalues of $M(z)$ lie in $(1/A,1-1/A)$ almost everywhere.  Since these eigenvalues are not continuous, it will be helpful to note that they are uniformly bounded away from $\frac{1}{A}$ and $1-\frac{1}{A}$.

We are now ready to define $\Upsilon$.  For $1\leq j,k\leq 2$, we define $\Upsilon^{\bar k j}=M^{\bar k j}$.  For $1\leq j\leq 2$, we define $\Upsilon^{\bar 3 j}=-2i\sum_{k=1}^2\frac{\partial\rho_t}{\partial\bar z_k}M^{\bar k j}$ and $\Upsilon^{\bar j 3}=\overline{\Upsilon^{\bar 3 j}}$.  Finally, we define $\Upsilon^{\bar 3 3}=4\sum_{j,k=1}^2\frac{\partial\rho_t}{\partial\bar z_k}M^{\bar k j}\frac{\partial\rho_t}{\partial z_j}$.  Observe that
\begin{equation}
\label{eq:Upsilon_kernel}
  \sum_{k=1}^3\frac{\partial\rho_t}{\partial\bar z_k}\Upsilon^{\bar k j}\equiv 0\text{ for all }1\leq j\leq 3,
\end{equation}
so we immediately obtain \eqref{eq:Upsilon_normal_vanishes}.

To estimate the eigenvalues of $\Upsilon$, we first observe that for $1\leq k\leq 2$, $\frac{\partial\rho_t}{\partial\bar z_k}$ is a continuous function vanishing at the points of discontinuity for $M$, so for $1\leq j\leq 3$, $\Upsilon^{\bar 3 j}$ is a continuous function vanishing whenever $z_1=z_2=\im z_3=0$.  Hence, on a sufficiently small neighborhood of the origin, $\Upsilon$ is an arbitrarily small perturbation of the matrix
\[
  \Upsilon_0=\begin{pmatrix}M^{\bar 1 1}&M^{\bar 1 2}&0\\M^{\bar 2 1}&M^{\bar 2 2}&0\\0&0&0\end{pmatrix}.
\]
Since $\Upsilon_0$ has two eigenvalues in the interval $(1/A,1-1/A)$ (uniformly bounded away from the endpoints), we can choose $U$ sufficiently small so that $\Upsilon$ has two eigenvalues in $(1/A,1-1/A)$ (see Corollary 6.3.4 in \cite{HoJo85} for a result on perturbations of eigenvalues that suffices for our purposes).  By \eqref{eq:Upsilon_kernel}, zero is also an eigenvalue of $\Upsilon$. Since $\Upsilon$ has only three eigenvalues, we conclude that $\Upsilon$ has two eigenvalues in $(1/A,1-1/A)$ and one eigenvalue equal to zero, and hence $\Upsilon$ and $I-\Upsilon$ are positive semi-definite on $U$.

It remains for us to show that $\Upsilon$ has the regularity required to be an element of $\mathcal{M}^2_{\Omega,\eta}(U)$ for all $0 \leq \eta \leq 1$.  For $1\leq j\leq 2$, \eqref{eq:M_divergence} implies $\Upsilon^{j}\equiv 0$ almost everywhere.  Here, we have used the fact that every coefficient of $\dbar\rho_t$ and $M$ is independent of $z_3$.  For the third entry, we again use \eqref{eq:M_divergence} to compute
\[
  \Upsilon^{3}=\sum_{j,k=1}^2 2i M^{\bar k j}\frac{\partial^2\rho_t}{\partial z_j\partial\bar z_k}.
\]
We clearly have $\Upsilon^j\in L^\infty(U)$ for all $1\leq j\leq 3$.  For $z\in\mathbb{R}^n$, we set
\[
  \mu(z)=\begin{cases}\sum_{j,k=1}^2\frac{\partial^2\rho_t}{\partial z_j\partial\bar z_k}(z)M^{\bar k j}(z)&|z_1|^2+|z_2|^2\neq 0\\0&|z_1|^2+|z_2|^2=0\end{cases},
\]
so that $\Upsilon^{3}=2i\mu$ almost everywhere on $U$.  If we can show that $\mu\in C^{0,1}(U)$, then \eqref{eq:Theta_identity} implies $\Theta_{\Upsilon,\eta}\in L^\infty(U)$ for any $0\leq\eta\leq 1$.

To compute $\mu(z)$, we first use \eqref{eq:f_sum_identity}, \eqref{eq:f_square_sum_identity}, and \eqref{eq:f_cube_sum_identity} to obtain
\[
  |z_1|^2\frac{\partial^2\psi}{\partial z_2\partial\bar z_2}(z)+|z_2|^2\frac{\partial^2\psi}{\partial z_1\partial\bar z_1}(z)=(|z_1|^2+|z_2|^2)\left(a-4b+5bg(z)-b(g(z))^2\right)
\]
and
\[
  |z_2|^2\frac{\partial^2\psi}{\partial z_2\partial\bar z_2}(z)+|z_1|^2\frac{\partial^2\psi}{\partial z_1\partial\bar z_1}(z)=(|z_1|^2+|z_2|^2)\left(a-bg(z)+b(g(z))^2\right).
\]
Since
\[
  z_1\bar z_2\frac{\partial^2\psi}{\partial z_1\partial\bar z_2}(z)=-\frac{b}{2}(|z_1|^2+|z_2|^2)(g(z))^2,
\]
we have the tools to compute
\begin{equation}
\label{eq:mu_computation}
  \mu(z)=(|z_1|^2+|z_2|^2)\left((1-t)a-4b+(5+t)bg(z)-(1+2t)b(g(z))^2\right)
\end{equation}
when $|z_1|^2+|z_2|^2\neq 0$.  We have $\abs{\mu(z)}\leq O(|z_1|^2+|z_2|^2)$, so it is not difficult to check that $\mu\in C^{1,1}(U)$
which means that $\Upsilon\in\mathcal{M}^2_{\Omega,\eta}(U)$ for all $0\leq\eta\leq 1$.

To complete our proof that $\Upsilon$ satisfies the hypotheses of Proposition \ref{prop:sufficient_condition}, we must show that \eqref{eq:weak_z_q} holds.  By construction we have $\sum_{j,k=1}^3\Upsilon^{\bar k j}\frac{\partial^2\rho_t}{\partial z_j\partial\bar z_k}=\mu$, so it will suffice to prove
\begin{equation}
\label{eq:weak_z_q_mu}
  \Tr\mathcal{L}-\mu\geq 0\text{ on }U.
\end{equation}
For $t>-\frac{1}{2}$, we note that the vertex of the parabola $y=(5+t)x-(1+2t)x^2$ is located at $\left(\frac{5+t}{2(1+2t)},\frac{(5+t)^2}{4(1+2t)}\right)$, and $0<\frac{5+t}{2(1+2t)}<1$ if and only if $t>1$.  For $t\leq-\frac{1}{2}$, this parabola has no local maxima.  Hence, for $0\leq x\leq 1$, we have $(5+t)x-(1+2t)x^2\leq\frac{(5+t)^2}{4(1+2t)}$ when $t>1$ and $(5+t)x-(1+2t)x^2\leq 4-t$ when $t\leq 1$.  Replacing $x$ with $g(z)$, we see that \eqref{eq:mu_computation} implies
\[
  \mu(z)\leq\left((1-t)a-4b+\frac{(5+t)^2}{4(1+2t)}b\right)(|z_1|^2+|z_2|^2)
\]
when $t>1$ and
\[
  \mu(z)\leq\left((1-t)a-tb\right)(|z_1|^2+|z_2|^2)
\]
when $t\leq 1$.

When $t<1$, we may set $a=\frac{1}{2}$ and $b=0$ (which means that $\Upsilon$ has smooth coefficients), and \eqref{eq:ex2_trace} implies that
\[
  \Tr\mathcal{L}(z)-\mu(z)\geq\frac{1-t}{2}(|z_1|^2+|z_2|^2)-O((|z_1|^2+|z_2|^2)^4),
\]
so we may choose $U$ sufficiently small so that \eqref{eq:weak_z_q_mu} (and hence \eqref{eq:weak_z_q}) holds.  This means that the hypotheses of Corollary \ref{cor:sufficient_condition_C_2} are satisfied for any $A>2$, so we have maximal estimates on $(0,2)$-forms.

When $1\leq t<\frac{13+2\sqrt{31}}{9}$, we have $9t^2-26t+5<0$.  Since $t>\frac{1}{2}$, elementary algebraic manipulations may be used to show that this is equivalent to $1-t>-4+\frac{(5+t)^2}{4(1+2t)}$.  Since $t\geq 1$, $-4+\frac{(5+t)^2}{4(1+2t)}<0$, and so we have $(1-t)\left(-4+\frac{(5+t)^2}{4(1+2t)}\right)^{-1}<1$.  If we set $a=\frac{4}{5}$, then $\Upsilon$ satisfies our eigenvalue hypothesis whenever $0\leq b<\frac{1}{5}$.  Hence, we may choose $b$ satisfying $\frac{1-t}{5}\left(-4+\frac{(5+t)^2}{4(1+2t)}\right)^{-1}<b<\frac{1}{5}$.  By \eqref{eq:ex2_trace}, we have
\begin{multline*}
  \Tr\mathcal{L}(z)-\mu(z)\geq\\
  \left(\frac{1-t}{5}+\left(4-\frac{(5+t)^2}{4(1+2t)}\right)b\right)(|z_1|^2+|z_2|^2)-O((|z_1|^2+|z_2|^2)^4).
\end{multline*}
Once again, this allows us to choose $U$ sufficiently small so that  \eqref{eq:weak_z_q_mu} holds, and hence the hypotheses of Proposition \ref{prop:sufficient_condition} are satisfied.  As a result, we have maximal estimates on $(0,2)$-forms whenever $t<\frac{13+2\sqrt{31}}{9}$.

We conclude with a proof that \eqref{eq:necessary_condition} holds for all $t\in\mathbb{R}$.  By Theorem \ref{thm:necessary_condition}, we immediately obtain \eqref{eq:necessary_condition} whenever $t<\frac{13+2\sqrt{31}}{9}$.  Hence, it will suffice to prove \eqref{eq:necessary_condition} when $t>1$.  In this case, we have
\[
  t(|z_1|^4+|z_2|^4)-|z_1|^2|z_2|^2\geq\frac{2t-1}{4}(|z_1|^2+|z_2|^2)^2,
\]
so \eqref{eq:ex2_det} implies
\[
  \det\mathcal{L}\leq -\frac{2t-1}{4}(|z_1|^2+|z_2|^2)^2+O((|z_1|^2+|z_2|^2)^{5}).
\]
Since \eqref{eq:ex2_trace} implies that
\[
  (\Tr\mathcal{L})^2\leq(1-t)^2(|z_1|^2+|z_2|^2)^2+O((|z_1|^2+|z_2|^2)^{5}),
\]
we may choose $0<s<\frac{2t-1}{4(1-t)^2}$ and find a neighborhood $U$ of $0$ sufficiently small so that
\[
  \det\mathcal{L}\leq -s(\Tr\mathcal{L})^2
\]
on $U$.  Since this means that $\det\mathcal{L}\leq 0$ on $U$, the Levi-form can have at most one negative and one positive eigenvalue, so \eqref{eq:necessary_condition} is equivalent to $\lambda_2\geq\frac{1}{A}(-\lambda_1+\lambda_2)$ on $U$.  Since the Levi-form is represented by a $2\times 2$ matrix, the eigenvalues are given by $\lambda_1=\frac{1}{2}\Tr\mathcal{L}-\frac{1}{2}\sqrt{(\Tr\mathcal{L})^2-4\det\mathcal{L}}$ and $\lambda_2=\frac{1}{2}\Tr\mathcal{L}+\frac{1}{2}\sqrt{(\Tr\mathcal{L})^2-4\det\mathcal{L}}$, so \eqref{eq:necessary_condition} is equivalent to
\[
  \Tr\mathcal{L}\geq-\left(1-\frac{2}{A}\right)\sqrt{(\Tr\mathcal{L})^2-4\det\mathcal{L}}.
\]
If $\Tr\mathcal{L}\geq 0$, then this holds for all $A>2$.  If $\Tr\mathcal{L}<0$, then we have shown that
\[
  \sqrt{(\Tr\mathcal{L})^2-4\det\mathcal{L}}\geq-\sqrt{1+4s}\Tr\mathcal{L}
\]
on $U$, so it suffices to prove that $1\leq\left(1-\frac{2}{A}\right)\sqrt{1+4s}$, and this holds whenever $A\geq \frac{2\sqrt{1+4s}}{\sqrt{1+4s}-1}$.  Hence, \eqref{eq:necessary_condition} holds.

\end{proof}
% ----------------------------------------------------------------
\bibliographystyle{amsplain}
%\bibliography{harrington}
\bibliography{mybib3-12-19}
\end{document}